\newtheorem{theorem}{Theorem}[section]
\newtheorem{thm}[theorem]{Theorem}
\newtheorem*{thm*}{Theorem}
\newtheorem{lem}[theorem]{Lemma}
\newtheorem{cor}[theorem]{Corollary}
\newtheorem*{cor*}{Corollary}
\newtheorem*{thmA}{Theorem A}
\newtheorem*{thmB}{Theorem B}
\newtheorem*{thmC}{Theorem C}
\newtheorem*{thmD}{Theorem D}
\theoremstyle{definition}
\newtheorem{example}[theorem]{Example}
\newtheorem{conj}[theorem]{Conjecture}
\theoremstyle{remark}
\newtheorem{remark}[theorem]{Remark}
\numberwithin{equation}{section}
\newcommand{\vol}[1]{\text{vol}\left(#1\right)}
\newcommand{\snk}{\text{sn}_{\kappa}}
\newcommand{\Alex}{\text{Alex\,}}
\newcommand{\Alexnk}{\text{Alex}^n(\kappa)}
\newcommand{\dsp}{\displaystyle}
\newcommand{\geod}[1]{[\,#1\,]}
\newcommand{\drn}{\uparrow}
\newcommand{\scup}[2]{\underset{#1}{\overset{#2}\cup}}
\begin{document}

\title{Relatively Maximum Volume Rigidity in Alexandrov Geometry}

%    Information for first author
\author{Nan Li}
%    Address of record for the research reported here
\address{Department of Mathematics, University of Notre Dame, Notre Dame, IN 46556}
%    Current address
%\curraddr{Department of Mathematics,
%University of Notre Dame, Notre Dame, IN 46556}
\email{nli2@nd.edu}
%    \thanks will become a 1st page footnote.
\thanks{Both authors are supported
partially by NSF Grant DMS 0805928 and by a research found from
Capital Normal University.}

%    Information for second author
\author{Xiaochun Rong}
\address{Department of Mathematics, Capital Normal University, Beijing, P.R. China}
\address{Department of Mathematics, Rutgers University, New Brunswick, New Jersey 08854}
\email{rong@math.rutgers.edu}
%\thanks{The first author was supported in part by NSF Grant \#000000.}

%    General info
%%%%%%%%\subjclass[2000]{Primary \red{53A02, 14E20; Secondary 46E25, 20C20}}

%\date{\today}

%\dedicatory{This paper is dedicated to our advisors.}

%\keywords{Differential geometry, algebraic geometry}

\begin{abstract}
Given a compact metric space $Z$ with Hausdorff dimension $n$, let $X$ be a metric space such that there is a distance non-increasing onto map $f:Z\to X$. Then the Hausdorff $n$-volume $\vol X\le\vol Z$. The {\it relatively maximum volume conjecture} says that if $X$ and $Z$ are both Alexandrov spaces and $\vol X=\vol Z$, then $X$ is isometric to a gluing space produced from $Z$ along its boundary $\partial Z$ and $f$ is length preserving. We will partially verify this conjecture, and give a further classification for compact Alexandrov $n$-spaces with relatively maximum volume in terms of a fixed radius and space of directions. We will also give an elementary proof for a pointed version of Bishop-Gromov relative volume comparison with rigidity in Alexandrov geometry.
\end{abstract}

\maketitle

\section*{Introduction}

Let $Z$ be a compact metric space with Hausdorff dimension $\alpha$. Consider all compact metric spaces
$X$ with Hausdorff dimension $\alpha$ such that there is a distance non-increasing onto map $f: Z\to X$. We let ``vol'' denote the Hausdorff measure (or volume) in the top dimension. Then $\vol X\le \vol Z$.
A natural question is to determine $X$ (in terms of $Z$)
when $\text{vol}(X)=\text{vol}(Z)$. We will refer this as a {\it relatively maximum volume rigidity problem}.
%We make a convention that in this paper, $\text{vol}(X)=\text{vol}(X)$ always
%means with respect to the same appropriate dimension.

A possible answer to the relatively maximum volume rigidity problem is closely related to the regularity of
underlying geometric and topological structures. For instance, if $Z$ and $X$ are closed
Riemannian $n$-manifolds, then $f$ is an isometry (see Corollary \ref{cor1.A}).
On the other hand, taking any measure-zero subset $S$ in $Z$ (a Riemannian manifold) and identifying $S$ with a point $p\in S$, then the projection map, $Z\to X=Z/(S\sim p)$, is a distance non-increasing onto map, and it is hopeless to have some rigidity
on $Y$ in terms of $X$.

In this paper, we will study the relatively maximum volume rigidity problem in Alexandrov
geometry, partly because an Alexandrov space $X$ has a ``right" geometric structure
for this problem (see Conjecture 0.1 below). For instance,
for $p\in X$, the gradient-exponential map, $g\exp_p: T_pX\to X$,
becomes a distance non-increasing map, when $T_pX$ is equipped with
the $\kappa$-cone metric via the cosine law on the space form $S^2_\kappa$ (cf. [BGP]).
When taking $Z$ to be a closed $r$-ball at the vertex (for $\kappa>0$, $r\le \frac
\pi{2\sqrt \kappa}$ or $r=\frac \pi{\sqrt \kappa}$), the relatively volume
rigidity problem (see Theorem B) indeed extends the (absolutely) Maximum
Radius-Volume Rigidity Theorem proved by
Grove-Petersen ([GP], Theorem \ref{[GP]}).

The recent study of Alexandrov spaces was initiated by Burago-Gromov-Perel'man
in the paper [BGP] and has gotten a lot attention lately. An Alaxandrov space with curvature $\text{curv}\ge
\kappa$ is a length metric space such that each point has a neighborhood
in which Toponogov triangle comparison holds with respect to the space form of constant curvature
$\kappa$. In the rest of the paper, we will freely use basic notions on an Alexandrov space
from [BGP] and [Pet2] (e.g., the space of directions, the gradient-exponential maps,
$(n,\delta)$-strained
points, etc). Let $\text{Alex}^n(\kappa)$ denote the collection of compact
Alexandrov $n$-spaces with $\text{curv}\ge \kappa$.

%We call a map between length metric spaces, $f: Z\to X$, an {\it intrinsically
%isometry embedding} if $f$ is an isometry with respect to the intrinsic metric
%on $f$-image.

Note that the boundary gluing will automatically yield a distance non-increasing onto (projection) map, which also preserves the volume (see Example \ref{eg1}, \ref{eg2}). We propose the following relatively maximum volume rigidity conjecture for Alexandrov spaces.

%\vskip2mm

\begin{conj} Let $Z$, $X\in \text{Alex}^n(\kappa)$, and let
$f: Z\to X$ be a distance non-increasing onto map. If $\text{vol}(Z)=\text{vol}(X)$,
then $X$ is isometric to a gluing space produced from $Z$ along its boundary $\partial Z$ and $f$ is length preserving. In particular, $Z$ is isometric to $X$
if $\partial Z=\varnothing$ or if $f$ is injective.

%Furthermore, when restricting the boundary identification to the union of equivalent classes containing at most two points, the equivalent relations define an isometric involution.
\end{conj}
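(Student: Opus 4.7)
The plan is to first promote the global volume equality into an infinitesimal isometric property of $f$ at almost every interior regular point, then propagate this to a global length preserving property, and finally analyze the fibers of $f$ to recognize $X$ as a quotient of $Z$ whose nontrivial identifications must lie on $\partial Z$.

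For the first step, let $Z^{\mathrm{reg}}\subset Z$ and $X^{\mathrm{reg}}\subset X$ denote the open, dense, full-measure subsets of $(n,\delta)$-strained interior points, on which tangent cones are $(1+\delta)$-close to $\mathbb{R}^n$ and the Hausdorff measure is comparable to Lebesgue measure in strainer charts. Since $f$ is $1$-Lipschitz, at a.e.\ $z\in Z^{\mathrm{reg}}$ with $f(z)\in X^{\mathrm{reg}}$ the blow-up of $f$ yields a well-defined differential $df_z\colon T_zZ\to T_{f(z)}X$ which is a $1$-Lipschitz self-map of $\mathbb{R}^n$. A Jacobian/area estimate combined with surjectivity gives
\[
\vol{X}\;=\;\vol{f(Z)}\;\le\;\int_Z |\mathrm{Jac}\,df_z|\,d\mathcal{H}^n(z)\;\le\;\vol{Z},
\]
and the assumed equality forces $|\mathrm{Jac}\,df_z|=1$ almost everywhere. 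Hence $df_z$ is a linear isometry a.e., and moreover $f$ is injective outside a null set.

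For the second step, I would apply Fubini along short transversal families of geodesics inside $Z^{\mathrm{reg}}$, together with the a.e.\ differential isometry, to conclude that $f$ preserves the length of almost every such geodesic. Since Alexandrov spaces are length spaces and geodesics staying in the regular interior are dense among shortest paths, a limiting argument yields $d_X(f(z_1),f(z_2))=d_Z(z_1,z_2)$ whenever $z_1,z_2\in Z^{\mathrm{reg}}$ are joined by an appropriate minimizer, and density plus continuity then propagate length preservation to all of $Z$. For the third step, length preservation combined with the a.e.\ injectivity of Step 1 forces $f^{-1}(f(z))=\{z\}$ for every interior regular $z$: near such a point the map $f$ is a local bi-Lipschitz homeomorphism by the infinitesimal isometry, so any other preimage would collapse a set of positive measure, contradicting $\vol{Z}=\vol{X}$. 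Therefore every nontrivial fiber of $f$ is contained in $\partial Z$, the relation $z_1\sim z_2\iff f(z_1)=f(z_2)$ is a gluing pattern supported on $\partial Z$, and the induced map $Z/{\sim}\to X$ is a length preserving bijection, hence an isometry.

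I expect the main obstacle to be Step 2, namely the promotion of an a.e.\ infinitesimal isometry to a genuine length preserving property. A $1$-Lipschitz map between Alexandrov spaces is not smooth enough for a direct Rademacher/chain-rule argument along a single curve, so one must carefully select transversal families of short segments that avoid the a.e.\ exceptional set of Step 1, and then pass to limits using density and upper semicontinuity of $(n,\delta)$-strained points. A secondary subtlety is excluding the case where an interior regular point of $Z$ is sent by $f$ to a boundary point of $X$; this must be ruled out in order to localize all identifications of $f$ to $\partial Z$ and obtain the claimed gluing description.
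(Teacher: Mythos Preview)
The statement you are attempting is Conjecture 0.1, which the paper explicitly leaves open; there is no proof in the paper to compare against. The paper establishes only partial cases: Theorem A shows $f$ is an isometry under the extra hypothesis $f^{-1}(X^\delta)\subseteq Z^{\tau(\delta)}$, and Theorem B treats the special case $Z=\bar C_\kappa^R(\Sigma_p)$, $f=g\exp_p$, where that hypothesis is verified directly (Lemmas~2.4 and~2.5). The authors state that their argument relies in an essential way on the $\kappa$-cone structure and that removing this is the main obstacle to the full conjecture.

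Your outline is close in spirit to the paper's proof of Theorem A, but the gaps you flag are genuine and unresolved. For Step 2 the paper does not use a Rademacher/Fubini-along-transversals argument; it proves an explicit volume formula for a tube of overlapping $\epsilon$-balls along a geodesic in $X^\delta$ (Lemma 1.4) and compares it with the corresponding tube in $Z$ to force $f^{-1}$ to be length non-increasing up to $\tau(\delta)$. That comparison, however, needs the preimage under $f$ of an $(n,\delta)$-strained point to be $(n,\tau(\delta))$-strained, which is precisely the added hypothesis of Theorem A. Nothing in your Steps 1--2 supplies this: your area inequality in Step 1 only uses the full-measure set $Z^{\mathrm{reg}}\cap f^{-1}(X^{\mathrm{reg}})$, but to run a length argument along a \emph{given} geodesic in $X^\delta$ you need its entire $f$-preimage to sit in $Z^{\tau(\delta)}$, not just almost all of it.

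Your Step 3 has a further gap beyond the boundary issue you mention: you conclude that nontrivial fibers lie in $\partial Z$, but you have only argued they avoid interior \emph{regular} points. Interior non-boundary singular points of $Z$ are not excluded, yet the conjecture asserts gluing only along $\partial Z$. Even in the cone case the paper needs substantial additional work (Lemma 2.12 and the proof of Theorem 2.2) to show the identification on the boundary is an isometric involution; your proposal does not indicate how to obtain the gluing description in general.
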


%\vskip2mm

%A simple example illustrating Conjecture 0.1 is that $Z$ is an Euclidean $2$-simplex (with any size) and $X=Z/\sim$, where $z\sim z'$ if and only if $z$ and $z'$ sit in one of the three boundary segments such that $z$ and $z'$ are reflection each other with respect to the middle point of the segment.

%We point it out that Conjecture 0.1 also applies to a finite family of $\{f_i: Z_i\to X_i\}$. It is interesting that this extended Conjecture 0.1 implies the converse of the gluing theorem in Alexandrov geometry: Given $X, Y\in\text{Alex}^n(\kappa)$ with non-empty boundaries, if $\psi: \partial X\to \partial Y$ is an isometry, then $X\cup_\psi Y\in \text{Alex}^n(\kappa)$ (cf. [BGP], [Petrunin 97]). The extended Conjecture 0.1 implies that if $\psi: \partial X\to \partial Y$ is bijective such that $X\cup_\psi Y\in \text{Alex}^n(\kappa)$, then $\psi$ is an isometry. This is because the natural projection $f: X\cup Y\to X\cup_\psi Y$ is a distance non-increasing onto map, and $f$ induces an involution $\phi$ on $\partial (X\cup Y)=\partial X\cup\partial Y$, $\phi(x,y)=(\psi(x),\psi^{-1}(y))$.

Our goal in this paper is to partially verify Conjecture 0.1, and give a classification for the boundary gluing maps in a special case (see Theorem A, Corollary \ref{cor1.A}
and Theorem B).

We now begin to state the main results in this paper. Throughout this paper, $\tau(\delta)$ denotes a function in $\delta$ such that $\tau(\delta)\to 0$ as $\delta\to 0$. Our first
result verifies conjecture 0.1 for the case that
$f$ preserves non-$(n,\delta)$-strained points up to an error $\tau(\delta)$. For $X\in \text{Alex}^n(\kappa)$ and
$\delta>0$, let $X^\delta\subseteq X$ denote the set of all
$(n,\delta)$-strained points. Then a small ball centered at an
$(n,\delta)$-strained point is almost isometric to an open subset
in $\Bbb R^n$ ([BGP]).

%\vskip2mm

\begin{thmA} Let $Z$, $X$ be Alexandrov $n$-spaces (not necessarily complete) with
curvature $\text{curv}\ge \kappa$ and $\text{vol}(Z)=\text{vol}(X)$.
Suppose that $f: Z\to X$ is a distance non-increasing onto map
such that for any $\delta>0$, $f^{-1}(X^\delta)\subseteq Z^{\tau(\delta)}$. Then $f$ is an isometry.
\end{thmA}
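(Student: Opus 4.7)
My plan is to convert the global volume identity $\vol Z=\vol X$ into a pointwise almost-isometry on the open full-measure stratum of strained points, and then extend to all of $Z$ by continuity. The starting observation is the following volume equality. Since $f$ is onto, $f(f^{-1}(A))=A$ for every measurable $A\subseteq X$, and since $f$ is $1$-Lipschitz, $\vol{A}\le\vol{f^{-1}(A)}$. Applying this inequality simultaneously to $A$ and $X\setminus A$, and using $\vol Z=\vol X$, forces equality throughout, so
\[
\vol{f^{-1}(A)}=\vol{A}\quad\text{for every measurable } A\subseteq X.
\]

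Next I would establish injectivity of $f$ over the strained stratum. Fix $\delta>0$, $x\in X^\delta$, and let $z_1,\dots,z_k$ be distinct preimages of $x$. By hypothesis each $z_i\in Z^{\tau(\delta)}$, and by [BGP] the balls $B_\epsilon(z_i)$ and $B_\epsilon(x)$ are all $\tau(\delta)\epsilon$-close to the Euclidean ball of radius $\epsilon$ for $\epsilon$ small, so in particular $\vol{B_\epsilon(z_i)}\ge(1-\tau(\delta))\omega_n\epsilon^n$ and $\vol{B_\epsilon(x)}\le(1+\tau(\delta))\omega_n\epsilon^n$. The $B_\epsilon(z_i)$ are pairwise disjoint and contained in $f^{-1}(B_\epsilon(x))$, so the volume identity gives
\[
k\,(1-\tau(\delta))\omega_n\epsilon^n\le\sum_i\vol{B_\epsilon(z_i)}\le\vol{f^{-1}(B_\epsilon(x))}=\vol{B_\epsilon(x)}\le(1+\tau(\delta))\omega_n\epsilon^n,
\]
forcing $k=1$ for $\delta$ small; hence $f$ is injective on $f^{-1}(X^\delta)$. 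Now for $z\in Z^{\tau(\delta)}\cap f^{-1}(X^\delta)$ the same comparison applied to $f\colon B_\epsilon(z)\to B_\epsilon(f(z))$ shows that $f$ is nearly volume-preserving on a small ball whose source and target are both $\tau(\delta)\epsilon$-close to a Euclidean $\epsilon$-ball. The rigidity fact that a $1$-Lipschitz, nearly volume-preserving map between nearly Euclidean balls is nearly isometric (provable via the strainer/gradient-exponential calculus of [BGP], [Pet2]) then yields that $f|_{B_\epsilon(z)}$ is a $\tau(\delta)$-almost isometry.

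Letting $\delta\to 0$, the set $X^\delta$ exhausts $X$ up to measure zero and the local distortion $\tau(\delta)\to 0$, so $f$ is a local isometry on a dense, full-measure subset of $Z$; by continuity and the length-space structure of $Z$ and $X$, this promotes to global distance preservation, and combined with surjectivity together with the injectivity just established, $f$ is an isometry.

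The step I expect to be the main obstacle is the local rigidity lemma just invoked: proving with an explicit $\tau(\delta)$-error that a $1$-Lipschitz, volume-preserving map between two $\tau(\delta)$-Euclidean Alexandrov balls is $\tau(\delta)$-almost isometric. This appears to require a careful comparison of strainer charts on $Z$ and $X$ transported through $f$, using that at a strained point the Jacobian of the differential $df$ is bounded by $1$ with equality only in the isometric case, together with uniform bi-Lipschitz control of the charts over the (possibly non-compact) strained stratum. One also has to handle the mild subtlety that $Z$ and $X$ are not assumed complete, so the local-to-global density/continuity argument must be phrased on the open strained subsets without invoking compactness.
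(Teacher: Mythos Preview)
Your outline is sound and in fact parallels the paper's proof closely: the volume identity $\vol{f^{-1}(A)}=\vol A$ is the paper's Lemma~\ref{vol.preserve}, injectivity on the strained stratum is Lemma~\ref{f.homeo}, and the final local-to-global step via density of $X^\delta$ and approximation by piecewise geodesics is how the paper concludes as well.

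The genuine gap is precisely the step you flag as the main obstacle: the assertion that a $1$-Lipschitz, volume-preserving map between nearly Euclidean balls is $\tau(\delta)$-almost isometric. Your suggested route---transporting by strainer charts and bounding the Jacobian of $df$---runs into real trouble. In an Alexandrov space there is no pointwise differential $df$ to speak of, and the strainer charts are only $\tau(\delta)$-bi-Lipschitz, not smooth. Even after pushing everything to $\mathbb R^n$ via charts, you are left with proving that a $(1+\eta)$-Lipschitz bijection of Euclidean domains with $|Jg|\in[1-\eta,1+\eta]$ a.e.\ is $(1+\tau(\eta))$-bi-Lipschitz; this is not a standard lemma, and the obvious attempts (integrating $Dg$ along segments) only recover the $\le$ direction.

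The paper never proves such a pointwise rigidity lemma. Instead it controls lengths directly by a volume computation: take a geodesic $[a_{\epsilon_1}b_{\epsilon_1}]\subset X^\delta$, lay down a fine partition $\{x_i\}$, and compute the volume of the ``ball tube'' $\bigcup_iB_\epsilon(x_i)$ by an explicit formula (Lemma~\ref{balltube}), which gives $\vol{B_\epsilon(\mathbb R^{n-1})}\cdot\sum_i|x_ix_{i+1}|+O(\epsilon^n)$ up to a $(1+\tau(\delta))$ factor. A crude preliminary bound $|f^{-1}(x_i)f^{-1}(x_{i+1})|\le 2|x_ix_{i+1}|$ (Lemma~\ref{delta.lip}, itself proved by the two-ball case of the same volume comparison) guarantees that the preimage points $z_i=f^{-1}(x_i)$ also satisfy the hypotheses of the tube formula. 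Volume preservation of $f$ then forces $\sum_i|z_iz_{i+1}|\le(1+\tau(\delta))\sum_i|x_ix_{i+1}|$, which is exactly the length estimate needed to conclude $|\tilde a\tilde b|\le(1+\tau(\delta))|ab|$. In short, Lemma~\ref{balltube} is the concrete replacement for your black-box local rigidity lemma, and constitutes the technical heart of the argument; what it buys over a Jacobian approach is that it is entirely metric and needs no differentiable structure.
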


%\vskip2mm

%Note that in Theorem A, if $Z$ has a non-empty boundary, then the boundary identification is trivial.

A point $z$ in $Z$ is called {\it regular}, if the space of directions
$\Sigma_x$ is isometric to a unit sphere. Clearly, the space $Z$ with all
points regular is a topological manifold but $Z$ may not be
isometric to any Riemannian manifold (e.g., the doubling of
two flat disks). Theorem A includes the following case:

%\vskip2mm

\begin{cor} \label{cor1.A} Let $Z, X\in \text{Alex}^n(\kappa)$ with $\text{vol}
(Z)=\text{vol}(X)$ and all points in $Z$ are regular (e.g $Z$ is a Riemannian manifold). If $f: Z\to X$
is a distance non-increasing onto map, then $f$ is an isometry.
\end{cor}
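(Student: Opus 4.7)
The plan is to reduce Corollary \ref{cor1.A} directly to Theorem A by showing that, when every point of $Z$ is regular, the hypothesis $f^{-1}(X^\delta)\subseteq Z^{\tau(\delta)}$ holds in a completely trivial way.

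First I would recall that a point $z\in Z$ is regular precisely when $\Sigma_z$ is isometric to the round unit $(n-1)$-sphere, equivalently when the tangent cone $T_zZ$ is isometric to $\mathbb{R}^n$. The key observation driving the argument is the claim that every regular point is $(n,\delta)$-strained for all $\delta>0$. I would prove this as follows: since $\Sigma_z\cong S^{n-1}$, one can select $n$ pairs of antipodal directions $\{\xi_i^{\pm}\}_{i=1}^n\subseteq\Sigma_z$ forming an orthonormal frame. Each $\xi_i^{\pm}$ is arbitrarily well approximated by initial directions of shortest geodesics emanating from $z$ (directions of shortest geodesics are dense in $\Sigma_z$). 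Picking points $a_i,b_i$ at a common small distance $\varepsilon$ along such geodesics and applying the first variation formula, the angles $\angle a_izb_i$ approach $\pi$ and $\angle a_iza_j$, $\angle a_izb_j$ for $i\ne j$ approach $\pi/2$ as $\varepsilon\to 0$; hence for $\varepsilon$ sufficiently small the configuration $\{(a_i,b_i)\}$ is a genuine $(n,\delta)$-strainer at $z$.

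Consequently $Z=Z^\delta$ for every $\delta>0$. In particular
\[
f^{-1}(X^\delta)\subseteq Z=Z^\delta\subseteq Z^{\tau(\delta)}
\]
for any function $\tau$ with $\tau(\delta)\to 0$ (take, e.g., $\tau(\delta)=\delta$). Thus the hypothesis of Theorem A is satisfied, and Theorem A gives that $f$ is an isometry.

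There is no serious obstacle in this approach: the argument is essentially a reduction that unpacks the definition of a regular point and invokes Theorem A. The only step requiring even minor care is the elementary verification that regular points carry $(n,\delta)$-strainers of every precision, and this is standard: it rests only on the density of geodesic directions in $\Sigma_z$ together with continuity of the comparison angle as a function of the endpoints.
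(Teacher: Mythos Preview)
Your proposal is correct and matches the paper's approach: the paper simply states that ``Theorem A includes the following case'' without giving a separate proof, and your argument makes explicit the one observation needed for this reduction---that a regular point is $(n,\delta)$-strained for every $\delta>0$, so $Z=Z^\delta$ and the hypothesis of Theorem~A holds trivially. The only minor refinement worth noting is that the strainer condition is stated in terms of \emph{comparison} angles $\tilde\measuredangle a_izb_j$, not actual angles; but since comparison angles converge to angles in $\Sigma_z$ as the side lengths shrink (cf.\ Lemma~\ref{bgp75}), your sketch goes through unchanged.
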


%\vskip2mm

In Alexandrov geometry, perhaps the most natural distance non-increasing onto map is the
gradient-exponential map $g\exp_p: C_\kappa(\Sigma_p)\to X$, $p\in X\in \text{Alex}^n(\kappa)$,
where $C_\kappa(\Sigma_p)$ denotes the tangent cone $T_pX$ equipped
with a $\kappa$-cone metric via the cosine law in $S^2_\kappa$ ([BGP]).
Since $g\exp_p$ is distance non-increasing and preserves any $r$-ball,
one immediately gets the pointed version of Bishop type
volume comparison:
$$\text{vol}(B_R(p))\le \text{vol}(C^R_\kappa(\Sigma_p)),$$
where $C^R_\kappa(\Sigma_p)$ denotes the {\it open} $R$-ball in
$C_\kappa(\Sigma_p)$ at the vertex $\tilde o$. We will show that when the equality holds, $g\exp_p$ will satisfy the conditions in Theorem A (Lemma \ref{exp.delta}, Lemma \ref{exp})
and thus open ball $C^R_\kappa(\Sigma_p)$ is isometric to $B_R(p)$
with respect to intrinsic metrics (see Theorem \ref{open.iso}).

As an important case for Conjecture 0.1, one leads to classify Alexandrov
spaces with relatively maximum volume: given any $\kappa$, $R>0$ and $\Sigma\in
\text{Alex}^{n-1}(1)$, let $\mathcal A^R_\kappa(\Sigma)$
denote the collection of Alexandrov $n$-spaces $X\ni p$ satisfying
$$\text{curv}\ge \kappa,\qquad X=\bar B_R(p),\qquad \Sigma_p=\Sigma.$$
Then $\text{vol}(X)\le \text{vol}(C^R_\kappa(\Sigma))=v(\Sigma,\kappa,R)$.
When $\text{vol}(X)=v(\Sigma,\kappa,R)$, we say that $X$ has the
relatively maximum volume.

%\vskip2mm

\begin{thmB}[{Relatively maximum volume rigidity}] Let $X\in
\mathcal A^R_\kappa(\Sigma)$ such that $\text{vol}(X)=v(\Sigma,\kappa,R)$.
Then $X$ is isometric to $\bar C^R_\kappa(\Sigma)/x\sim \phi(x)$
and $R\le \frac \pi{2\sqrt \kappa}$ or $R=\frac \pi{\sqrt \kappa}$
for $\kappa>0$, where $\phi: \Sigma\times \{R\}\to\Sigma\times \{R\}$
is an isometric involution (which can be trivial). Conversely, given
any isometric involution $\phi$ on $\Sigma$, $\bar C^R_\kappa(\Sigma)
/x\sim \phi(x)\in \mathcal A^R_\kappa(\Sigma)$
and has the relatively maximum volume.
\end{thmB}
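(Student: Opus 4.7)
The plan is to apply Theorem~\ref{open.iso} to the gradient-exponential map $g\exp_p : \bar C^R_\kappa(\Sigma) \to \bar B_R(p) = X$, and then extend the resulting intrinsic isometry of open balls across the boundary sphere $\Sigma \times \{R\}$ to read off the gluing involution. Since $g\exp_p$ is distance non-increasing and preserves the radial coordinate from the vertex $\tilde o$ to $p$, the hypothesis $\vol{X} = v(\Sigma,\kappa,R)$ together with Theorem~\ref{open.iso} forces $g\exp_p$ to restrict to an intrinsic isometry $C^R_\kappa(\Sigma) \to B_R(p)$ of open balls. In particular $g\exp_p$ is injective on the open cone, and the boundary $\Sigma \times \{R\}$ maps into the distance-$R$ sphere of $X$, disjoint from the image of the open cone; all non-injectivity of $g\exp_p$ is therefore concentrated on $\Sigma\times\{R\}$.

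The heart of the argument is to identify the fiber structure of $g\exp_p$ on $\Sigma\times\{R\}$ with a single isometric involution $\phi$. First I would argue that each fiber has at most two points: if three distinct boundary points $x_1, x_2, x_3$ shared an image $q \in X$, then pulling back a small neighborhood of $q$ through the interior isometry would present the local structure at $q$ as three disjoint collar half-neighborhoods identified along a codimension-one face, i.e.\ a triple book, which violates $\text{curv}\ge\kappa$ at $q$ via Toponogov comparison applied to quadrilaterals spanning different pages. Having reduced to at most two-to-one, I would set $\phi(x)=y$ for any pair $\{x,y\}$ with common image and $\phi(x)=x$ otherwise; symmetry immediately gives $\phi^2 = \operatorname{id}$. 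The length-preserving property of $g\exp_p$ supplied by Theorem~A (via Lemmas~\ref{exp.delta} and~\ref{exp}) then shows that $\phi$ is an isometry of $\Sigma\times\{R\}$, equivalently of $\Sigma$ after dividing out the $\snk(R)$ rescaling that defines the cone metric at radius $R$.

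For the radius restriction, when $\kappa > 0$ the cone $\bar C^R_\kappa(\Sigma)$ must itself be realizable as the closed $R$-ball at its vertex inside an Alexandrov space with $\text{curv}\ge\kappa$; the Grove--Petersen Maximum Radius Theorem (Theorem~\ref{[GP]}), applied to $(X,p)$ together with the cone rigidity just established, leaves only the two admissible ranges $R \le \pi/(2\sqrt\kappa)$ and $R = \pi/\sqrt\kappa$. In the latter case $\snk(R) = 0$, so $\Sigma\times\{R\}$ collapses to a single point and $\phi$ is automatically trivial. Intermediate values $R\in(\pi/(2\sqrt\kappa),\pi/\sqrt\kappa)$ are excluded because in the cosine-law cone metric, antipodal boundary points lie at cone-distance $2\pi/\sqrt\kappa - 2R < 2R$, so their minimizing cone-geodesics cease to pass through the vertex, which is incompatible with the identification $X = \bar B_R(p)$.

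The converse is a direct verification already illustrated in Examples~\ref{eg1} and~\ref{eg2}: given an isometric involution $\phi$ on $\Sigma$, the quotient $\bar C^R_\kappa(\Sigma)/x \sim \phi(x)$ lies in $\Alexnk$ by Petrunin's gluing theorem, retains $\Sigma$ as the space of directions at the image vertex, equals the closed $R$-ball of that vertex, and preserves the $n$-Hausdorff volume $v(\Sigma,\kappa,R)$ since the gluing locus has measure zero. The main obstacle I anticipate is the rigorous exclusion of boundary fibers of size three or more: the triple-book heuristic must be upgraded to a precise comparison argument combining the interior isometry, the $(n,\delta)$-strain calculus underlying Theorem~A, and the first variation formula at the common image point $q$.
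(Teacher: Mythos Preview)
The central gap is your claim that ``the length-preserving property of $g\exp_p$ supplied by Theorem~A (via Lemmas~\ref{exp.delta} and~\ref{exp}) then shows that $\phi$ is an isometry of $\Sigma\times\{R\}$.'' Theorem~A applies only to the \emph{open} cone $C^R_\kappa(\Sigma)\to B_R(p)$; the paper explicitly notes in the proof of Theorem~\ref{open.iso} that $\exp_p:\bar C^R_\kappa(\Sigma_p)\to \bar B_R(p)$ may fail the hypothesis $f^{-1}(X^\delta)\subseteq Z^{\tau(\delta)}$. Points of $\Sigma\times\{R\}$ mapping to $L_p(X)$ are precisely where the $(n,\delta)$-strain calculus breaks down, so the interior isometry gives no direct control over $\phi$. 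Proving that $\phi$ is an isometry is in fact the main technical content of the paper (Theorem~\ref{iso.invol}); it requires the crossing-points analysis culminating in Lemma~\ref{non-cross}, which establishes the almost $1$-bi-Lipschitz estimate $\bigl|\,|\phi(\tilde x)\phi(\tilde y)|/|\tilde x\tilde y|-1\,\bigr|\le 20|\tilde x\tilde y|$ by an inductive construction of piecewise geodesics bouncing off $L_p(X)$. Your proposal skips this entirely.

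Two smaller points. First, the ``triple book'' heuristic for at-most-two preimages is unnecessarily elaborate: the paper (Lemma~\ref{invol}) observes that the two radial geodesics $\exp_p(\geod{\tilde o\tilde q_1})$ and $\exp_p(\geod{\tilde o\tilde q_2})$ concatenate to a local geodesic at $q$, and non-branching of geodesics immediately rules out a third preimage. Second, you cannot invoke Theorem~\ref{[GP]} for the radius restriction, since that theorem is stated for limits of Riemannian manifolds with $\Sigma_p=S^{n-1}_1$; the paper instead argues directly (Corollary~\ref{pso.non.opt}) that for $\frac{\pi}{2\sqrt\kappa}<R<\frac{\pi}{\sqrt\kappa}$ triangle comparison forces $L_p(X)$ to be a single point, contradicting that $\exp_p^{-1}(q)$ has at most two elements while $\Sigma$ has more. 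Finally, even after $\phi$ is known to be an isometric involution, one must still check that the quotient metric on $\bar C^R_\kappa(\Sigma)/\phi$ agrees with the metric on $X$; the paper handles this separately in the ``Completion of Proof of Theorem~B.''
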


%\vskip2mm

Theorem B verifies Conjecture 0.1 for the case $f=g\exp_p: Z=\bar C^R_\kappa(\Sigma_p)\to
X$, together with a further classification for the boundary identification.
Note that Theorem B implies that if $k>0$ and $\frac \pi{2\sqrt\kappa}<R<\frac \pi{\sqrt \kappa}$, then $\max
\{\text{vol}(X),\,\, X\in\mathcal A^R_\kappa(\Sigma)\}<v(\Sigma,\kappa,R)$. For the case that $X$ is a limit of Riemannian manifolds, a classification was given in [GP]. A general classification is more complicated, and we wish to discuss it elsewhere.

As mentioned earlier, Theorem B extends the radius-volume rigidity theorem in [GP],  which are stated below.

%\vskip2mm

\begin{thm}[{[GP]}]\label{[GP]} Let $M_i\overset{d_{GH}}\longrightarrow X$ be a Gromov-Hausdorff
convergent sequence of Riemannian $n$-manifold such that
$$\text{sec}_{M_i}\ge \kappa,\qquad \text{rad}(M_i)=R,\qquad \text{vol}
(M_i)\to \text{vol}
(C^R_\kappa(S^{n-1}_1)),$$
where $\text{rad}(M_i)=\min\{r, \bar B_r(p)=M_i, \,\, p\in M_i\}$. Then $X$ is isometric to $\bar C^R_\kappa(S^{n-1}_1)/x\sim \phi(x)$ and
$R\le \frac \pi{2\sqrt \kappa}$ or $R=\frac \pi{\sqrt \kappa}$ for $\kappa>0$,
where
$\phi: \partial \bar C^R_\kappa(S_1^{n-1})\to \partial \bar C^R_\kappa(S^{n-1}_1)$ is
either the antipital map, or a reflection by a totally geodesic
hypersurface. Moreover, $M_i$ is homeomorphic to an
$n$-sphere or a real projective $n$-space.
\end{thm}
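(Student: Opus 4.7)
My plan is to reduce Theorem \ref{[GP]} to Theorem B combined with Perelman's stability theorem, by showing the Gromov-Hausdorff limit $X$ lies in $\mathcal A^R_\kappa(S^{n-1}_1)$ with relatively maximum volume.

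Since $\text{vol}(M_i)\to\text{vol}(C^R_\kappa(S^{n-1}_1))>0$, the sequence $M_i\to X$ is non-collapsing, so by Burago-Gromov-Perel'man $X\in\text{Alex}^n(\kappa)$, and the continuity of Hausdorff $n$-volume under non-collapsing GH convergence in Alexandrov geometry gives $\text{vol}(X)=\text{vol}(C^R_\kappa(S^{n-1}_1))$. I would choose $p_i\in M_i$ with $\bar B_R(p_i)=M_i$, and after a subsequence pass $p_i\to p\in X$; the closed-ball equality passes to the limit to give $\bar B_R(p)=X$.

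To pin down $\Sigma_p\cong S^{n-1}_1$, I would combine two comparison estimates. Bishop's inequality gives
\[
\text{vol}(X)=\text{vol}(\bar B_R(p))\le\text{vol}(\Sigma_p)\cdot\int_0^R\snk(r)^{n-1}\,dr,
\]
so $\text{vol}(\Sigma_p)\ge\text{vol}(S^{n-1}_1)$. On the other hand, $\Sigma_p\in\text{Alex}^{n-1}(1)$ is compact with $\diam(\Sigma_p)\le\pi$, and the standard area comparison (proved inductively from Bishop at any basepoint) yields $\text{vol}(\Sigma_p)\le\text{vol}(S^{n-1}_1)$, with equality forcing $\Sigma_p$ isometric to $S^{n-1}_1$. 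Hence $X\in\mathcal A^R_\kappa(S^{n-1}_1)$ with relatively maximum volume, and Theorem B produces an isometric involution $\phi$ of $S^{n-1}_1\times\{R\}$ with
\[
X\cong\bar C^R_\kappa(S^{n-1}_1)\big/x\sim\phi(x),\qquad R\le\tfrac{\pi}{2\sqrt\kappa}\ \text{or}\ R=\tfrac{\pi}{\sqrt\kappa}.
\]

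It remains to identify $\phi$ and extract the topology of $M_i$. By Perelman's stability theorem, the non-collapsing GH convergence with $\text{curv}\ge\kappa$ gives $M_i$ homeomorphic to $X$ for all large $i$, so $X$ must be a closed topological $n$-manifold. Every isometric involution of the round sphere $S^{n-1}_1$ is an element of $O(n)$ of order $2$, and I would perform a local analysis of the quotient at the (possibly nonempty) fixed set of $\phi$ on $\partial\bar C^R_\kappa$ to decide when no boundary is created; the two cases that survive are $\phi=-I$ (antipodal, producing $\mathbb{RP}^n$ in the hemisphere case $R=\pi/(2\sqrt\kappa)$) and $\phi$ equal to a reflection by a totally geodesic hyperplane (producing $S^n$, which also covers the case $R=\pi/\sqrt\kappa$ where $\bar C^R_\kappa(S^{n-1}_1)\cong S^n_\kappa$ already). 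The main obstacle I anticipate is precisely this final classification of admissible involutions: ruling out $O(n)$-elements with fixed-point set of intermediate dimension requires matching the local manifold condition against the dichotomy $R\le\pi/(2\sqrt\kappa)$ or $R=\pi/\sqrt\kappa$ supplied by Theorem B, and it is this interplay — rather than the passage to the limit — that seems the most delicate piece of the argument.
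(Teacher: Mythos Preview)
Your approach is correct and matches the paper's own treatment: the paper does not give an independent proof of Theorem~\ref{[GP]} (which it attributes to [GP]) but only remarks, immediately after the statement, that choosing $p_i\in M_i$ with $\bar B_R(p_i)=M_i$ and passing to the limit $p_i\to p$ gives $\Sigma_p=S^{n-1}_1$, so that Theorem~B yields the rigidity part, while the homeomorphism classification is deferred to Theorem~C (whose proof, like yours, invokes Perelman stability). Your expressed worry about the final classification of the involution is not a genuine obstacle: once Perelman stability forces $X$ to be a closed topological manifold, among all $O(n)$-involutions of $S^{n-1}_1$ only the antipodal map and the hyperplane reflection produce a quotient with no boundary or non-manifold points (the local link at a fixed point is $\mathbb{RP}^{j}$ when $\phi$ has a $(-1)$-eigenspace of dimension $j$, forcing $j\in\{1,n\}$), which is exactly the dichotomy in [GP] and is consistent with Shteingold's list in Theorem~0.5.
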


%\vskip2mm

Note that $\text{vol}(X)=\text{vol}(C^R_\kappa(S^{n-1}_1))$. Choosing $p_i\in M_i$
such that $M_i=\bar B_R(p_i)$, then $p_i\to p\in X$ and $\Sigma_p=S^{n-1}_1$. By now
Theorem B implies the rigidity part of Theorem \ref{[GP]} (a generalization of the homeomorphic rigidity
in Theorem \ref{[GP]} will be given in Theorem C). Theorem B
also implies the following extension of Theorem \ref{[GP]} by S. Shteingold.
%\vskip2mm

\begin{thm} [{[Sh]}] Let $X\in \mathcal A^r_\kappa(S^{n-1}_1)$ with
$\text{vol}(X)=v(S^{n-1}_1,\kappa,r)$. Then $X=\bar C^r_\kappa(S^{n-1}_1)/x\sim\phi(x)$,
$x\in S^{n-1}_1\times \{r\}$,
where $\phi$ is the reflection on a $\ell$-dimensional
totally geodesic subsphere, $1\le \ell\le n$ ($\phi$ is
trivial for $\ell=n$.)
\end{thm}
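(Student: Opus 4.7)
I plan to derive Theorem 0.3 directly from Theorem B together with the elementary classification of isometric involutions of the round sphere, since all of the substantive geometric content already lives in Theorem B.

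First I apply Theorem B to $X$: since $X\in\mathcal A^r_\kappa(S^{n-1}_1)$ achieves the relatively maximum volume $v(S^{n-1}_1,\kappa,r)$, Theorem B supplies both the radius dichotomy ($r\le\tfrac{\pi}{2\sqrt\kappa}$ or $r=\tfrac{\pi}{\sqrt\kappa}$ when $\kappa>0$) and an isometric involution $\phi$ of $S^{n-1}_1\times\{r\}$ together with an isometry
$$X\;\cong\;\bar C^r_\kappa(S^{n-1}_1)\big/\,x\sim\phi(x).$$
It therefore remains only to describe all isometric involutions $\phi$ of the round sphere.

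To classify these, I identify $\mathrm{Isom}(S^{n-1}_1)=O(n)$. An involution $\phi\in O(n)$ satisfies $\phi^2=I$, so its minimal polynomial divides $t^2-1$ and $\phi$ is $\mathbb R$-diagonalizable with eigenvalues in $\{+1,-1\}$; orthogonality then forces the $\pm 1$-eigenspaces $V_+$ and $V_-$ to be orthogonal complements in $\mathbb R^n$. Writing $v=v_++v_-$, one has $\phi(v)=v_+-v_-$, so $\phi$ is the orthogonal reflection across $V_+$; setting $\ell=\dim V_+$, its restriction to $S^{n-1}_1$ is the geodesic reflection in the totally geodesic subsphere $V_+\cap S^{n-1}_1$, with $\ell=n$ recovering the trivial case. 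Substituting this into the isometry from Theorem B yields the stated conclusion. I expect no genuine obstacle here: the rigidity, gluing structure, and radius constraint are all delivered by Theorem B, and what remains is routine linear algebra; the only bookkeeping is to align the index $\ell$ with the dimension of the fixed linear subspace in $\mathbb R^n$ (equivalently, one more than the dimension of the fixed subsphere).
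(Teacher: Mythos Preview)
Your proposal is correct and matches the paper's approach: the paper simply asserts that Theorem 0.3 follows from Theorem B without spelling out the details, and your argument supplies exactly the missing step, namely the standard linear-algebra classification of involutions in $O(n)$ as orthogonal reflections across a subspace. There is nothing further to add.
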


%\vskip2mm

A further problem concerning Theorem B is to determine the homeomorphic type of $X$.
We have solved this problem for $X$ being a topological manifold (see Theorem \ref{[GP]}).

%\vskip2mm

\begin{thmC} Given $\Sigma\in \text{Alex}^{n-1}(1)$, $\kappa$ and $R>0$, there
is a constant $\epsilon=\epsilon(\Sigma,\kappa,R)>0$ such that if
$X\in \mathcal A^R_\kappa(\Sigma)$ with $\text{vol}(X)>
v(\Sigma,\kappa,R)-\epsilon$ and $X$ is a closed topological manifold, then $X$ is
homeomorphic to $S^n_1$ or a real projective space
$\Bbb RP^n$.
\end{thmC}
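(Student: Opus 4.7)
The plan is to argue by contradiction, combining Theorem~B with Perelman's topological stability theorem and a topological analysis of the gluing quotient. Suppose the conclusion fails: for some $\Sigma,\kappa,R$ there is a sequence $X_i\in\mathcal A^R_\kappa(\Sigma)$ of closed topological $n$-manifolds with $\text{vol}(X_i)\to v(\Sigma,\kappa,R)$, none homeomorphic to $S^n$ or $\Bbb RP^n$. By Gromov's precompactness theorem (curvature $\geq\kappa$, diameter $\leq 2R$, dimension $n$), a subsequence converges in Gromov--Hausdorff distance to a compact Alexandrov $n$-space $X_\infty=\bar B_R(p)$, where $p$ is a subsequential limit of basepoints $p_i\in X_i$ with $X_i=\bar B_R(p_i)$, and $\text{vol}(X_\infty)=v(\Sigma,\kappa,R)$. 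Combining the Bishop-type bound $\text{vol}(X_\infty)\leq v(\Sigma_p,\kappa,R)$ with semicontinuity of $\text{vol}(\Sigma_p)$ under basepoint convergence forces $\Sigma_p=\Sigma$, so $X_\infty\in\mathcal A^R_\kappa(\Sigma)$ attains the relatively maximum volume.

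Theorem~B then identifies $X_\infty$ isometrically with $\bar C^R_\kappa(\Sigma)/(x\sim\phi(x))$ for an isometric involution $\phi$ of $\Sigma\times\{R\}$, and gives $R\leq\pi/(2\sqrt\kappa)$ or $R=\pi/\sqrt\kappa$ (when $\kappa>0$). Perelman's stability theorem then produces homeomorphisms $X_i\cong X_\infty$ for all large $i$; in particular $X_\infty$ is itself a closed topological $n$-manifold. Since the cone vertex is a manifold point with space of directions $\Sigma$, one concludes that $\Sigma$ is homeomorphic to $S^{n-1}$, and hence $\bar C^R_\kappa(\Sigma)$ is homeomorphic to $D^n$ when $R<\pi/\sqrt\kappa$, or to $S^n$ when $R=\pi/\sqrt\kappa$ and $\kappa>0$ (the latter case forces $\phi$ to be trivial and yields $X_\infty\cong S^n$ immediately). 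The task reduces to the ball case $X_\infty\cong D^n/\phi$.

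The main obstacle is the topological classification of $D^n/\phi$ under the closed-manifold hypothesis. The plan is to form the topological double $Y:=D^n\cup_{\text{id}}D^n\cong S^n$ together with the canonical involution $\tilde\phi: Y\to Y$ that swaps the interiors of the two copies and restricts to $\phi$ on the common equator $S^{n-1}$, so that $X_\infty\cong Y/\tilde\phi$. The requirement that $Y/\tilde\phi$ be a closed manifold severely constrains $\text{Fix}(\phi)\subset S^{n-1}$: via a local slice analysis for isometric involutions, the fixed set must be either empty or a codimension-one topological submanifold of $S^{n-1}$, since $\phi=\text{id}$ produces a boundary component and fixed sets of codimension $k\geq 2$ yield links homeomorphic to $\Bbb RP^{k-1}$, which are not spheres. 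If $\phi$ is free, then $X_\infty$ is a closed $n$-manifold with universal cover $S^n$ and $\pi_1=\Bbb Z_2$, hence homeomorphic to $\Bbb RP^n$ by the topological classification of spherical space forms (invoking the resolved Poincar\'e conjecture and Postnikov-tower rigidity). If $\text{Fix}(\phi)$ is a codimension-one submanifold of $S^{n-1}$, then $\tilde\phi$ has codimension-two fixed set in $Y$ and acts by $-\text{id}$ on each normal 2-plane; a join decomposition $S^n\cong\text{Fix}(\tilde\phi)\ast S^1$ identifies $Y/\tilde\phi$ with $\text{Fix}(\tilde\phi)\ast(S^1/\Bbb Z_2)\cong S^n$. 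In either subcase $X_\infty$ is homeomorphic to $S^n$ or $\Bbb RP^n$, contradicting the choice of $X_i$.
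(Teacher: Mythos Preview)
Your overall strategy---contradiction, Gromov--Hausdorff limit, Theorem~B, Perelman stability---matches the paper's. The gap is in your topological analysis of the rigid limit. The step ``since the cone vertex is a manifold point with space of directions $\Sigma$, one concludes that $\Sigma$ is homeomorphic to $S^{n-1}$'' is false for $n\ge 5$. The paper itself points this out in the remark immediately following the statement of Theorem~C: take $X=C_1(C_1(N))$, the double spherical suspension of a Poincar\'e homology $3$-sphere $N$. By the Cannon--Edwards theorem $X\cong S^5$, so the cone vertex is a manifold point, yet $\Sigma=C_1(N)$ is not even a topological manifold, let alone $S^4$. In general, having the open cone on $\Sigma$ homeomorphic to $\Bbb R^n$ only forces $\Sigma$ to be a simply connected homology $(n-1)$-sphere, not $S^{n-1}$; correspondingly $\bar C^R_\kappa(\Sigma)$ need not be $D^n$. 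Your entire slice/join analysis of $D^n/\phi$ rests on this false identification and does not go through.

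The paper avoids this by never identifying $\Sigma$. It forms the Alexandrov double $\hat X=\bar C^R_\kappa(\Sigma)^+\cup_\phi\bar C^R_\kappa(\Sigma)^-$ with $X_\infty=\hat X/\Bbb Z_2$, argues that $\hat X$ is a simply connected closed topological $n$-manifold and an integral homology sphere (topologically $\hat X$ is a suspension of $\Sigma$; every space of directions is simply connected; the manifold condition at the cone points forces $\Sigma$ to have the homology of $S^{n-1}$, whence Mayer--Vietoris with two contractible cones gives the homology of $\hat X$), and invokes the Poincar\'e conjecture to get $\hat X\cong S^n$. If the $\Bbb Z_2$-action is free, $X_\infty\cong\Bbb RP^n$; otherwise Smith theory makes $\hat X^{\Bbb Z_2}$ a $\Bbb Z_2$-homology sphere, and Bredon's transfer theorem then shows $X_\infty=\hat X/\Bbb Z_2$ is an integral homology sphere, hence $\cong S^n$. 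This route is insensitive to the homeomorphism type of $\Sigma$. A secondary issue: you assert $\Sigma_p=\Sigma$ in the limit, which is stronger than what is proved or needed; the paper only obtains $\text{vol}(\Sigma_p)=\text{vol}(\Sigma)$ via an explicit distance non-increasing map $\Sigma\to\Sigma_p$ built from the Gromov--Hausdorff approximations, and then applies the rigidity analysis with $\Sigma_p$ in place of $\Sigma$.
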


%\vskip2mm

Note that $\Sigma$ in Theorem C is not necessarily a topological manifold;
for instance, $X=C_1(C_1(N))$, the twice
spherical suspensions over a Poincar\'e sphere $N$, satisfies
Theorem C but $\Sigma=C_1(N)$ is not a topological manifold.
However, $X$ is homeomorphic to a $5$-sphere (cf. [Ka1]).

In the proof of Theorem B, we establish a pointed version of Bishop volume comparison with rigidity (Theorem \ref{open.iso}). In general, we will prove the following pointed version of Bishop-Gromov relative volume comparison with rigidity.

For $p\in X\in\Alexnk$, let $A_R^r(p)$ denote the annulus $\{x\in X: r<|px|<R\}$,
$0\leq r<R$, and let $A_R^r(\Sigma_p)$ denote the
corresponding annulus in $C_{\kappa}(\Sigma_p)$.

%\vskip2mm

\begin{thmD} [{Pointed Bishop-Gromov
relative volume comparison}] Let $X\in\text{Alex}^n(\kappa)$. Then for any $p\in X$
and $R_3>R_2>R_1\geq 0$,
$$\frac{\text{vol\,}(A_{R_3}^{R_1}(p))}
{\text{vol\,}(A_{R_3}^{R_2}(p))}\ge
\frac{\text{vol\,}(A_{R_3}^{R_1}(\Sigma_p))}
{\text{vol\,}(A_{R_3}^{R_2}(\Sigma_p))}\hskip1mm,\text{ or equivalently },\hskip1mm
\frac{\text{vol\,}(A_{R_2}^{R_1}(p))}
{\text{vol\,}(A_{R_3}^{R_2}(p))}\ge
\frac{\text{vol\,}(A_{R_2}^{R_1}(\Sigma_p))}
{\text{vol\,}(A_{R_3}^{R_2}(\Sigma_p))}.$$
In particular,
$$\frac{\text{vol\,}(B_{R_1}(p))}{\text{vol\,}(B_{R_3}(p))}\ge
\frac{\text{vol\,}(C_{\kappa}^{R_1}(\Sigma_p))} {\text{vol\,}
(C_\kappa^{R_3}(\Sigma_p))}.$$
If any of the above inequalities becomes equal, then the open ball
$B_{R_3}(p)$ is isometric to $C_{\kappa}^{R_3}(\Sigma_p)$ with respect to
intrinsic metrics.
\end{thmD}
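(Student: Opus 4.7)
The plan is to adapt the classical Bishop-Gromov argument to the Alexandrov setting, using the gradient-exponential map $g\exp_p:C_\kappa(\Sigma_p)\to X$ in place of the Riemannian exponential. Recall that $g\exp_p$ is a distance non-increasing surjection onto $\bar B_R(p)$ for every $R>0$, and along each radial ray $\{(\xi,t):t\geq 0\}$, $\xi\in\Sigma_p$, it is an isometry up to the cut distance $\ell(\xi)\in(0,\infty]$. Set $\Omega:=\{(\xi,t):0<t<\ell(\xi)\}\subset C_\kappa(\Sigma_p)$; then $g\exp_p|_\Omega$ is injective with image of full $\mathcal{H}^n$-measure in $X\setminus\{p\}$.

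The crux is a pointwise radial Jacobian comparison. Write the $\kappa$-cone volume form on $C_\kappa(\Sigma_p)$ in polar coordinates as $\snk(t)^{n-1}\,d\mathcal{H}^{n-1}_\Sigma(\xi)\,dt$, and let $J(\xi,t)\,d\xi\,dt$ denote the pull-back of $\mathcal{H}^n|_X$ along $g\exp_p$ on $\Omega$. I would establish
\[
\frac{J(\xi,t_2)}{\snk(t_2)^{n-1}}\leq\frac{J(\xi,t_1)}{\snk(t_1)^{n-1}},\qquad 0<t_1<t_2<\ell(\xi),
\]
as the Alexandrov analog of the Riemannian Jacobi-field bound. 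This is extracted from Toponogov's theorem applied to thin hinges based at $g\exp_p(\xi,t_i)$: the infinitesimal transverse spread of the radial map at distance $t$ from $p$ is no larger than the spread in the model cone, where it equals $\snk(t)$.

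Integrating over $\Sigma_p$ and applying the elementary lemma that a non-increasing ratio of positive integrands yields non-increasing ratios of integrals over nested intervals produces
\[
\frac{\vol{A_{R_2}^{R_1}(p)}}{\vol{A_{R_3}^{R_2}(p)}}\geq\frac{\vol{A_{R_2}^{R_1}(\Sigma_p)}}{\vol{A_{R_3}^{R_2}(\Sigma_p)}}.
\]
The first displayed inequality of the theorem is equivalent via $A_{R_3}^{R_1}=A_{R_2}^{R_1}\sqcup A_{R_3}^{R_2}$ (modulo a null set) and adding $1$ to both sides; the ball comparison follows from the case $R_1=0$ by algebraic rearrangement.

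For the rigidity, suppose equality holds in any of the displayed inequalities. Reversing the monotonicity forces the pointwise Jacobian bound to be saturated almost everywhere on $\Omega\cap C_\kappa^{R_3}(\Sigma_p)$, which in particular gives $\ell(\xi)\geq R_3$ for $\mathcal{H}^{n-1}$-a.e.\ $\xi\in\Sigma_p$ and shows that $g\exp_p:C_\kappa^{R_3}(\Sigma_p)\to B_{R_3}(p)$ is volume-preserving. Invoking Lemmas \ref{exp.delta} and \ref{exp} to verify the strain-preservation hypothesis of Theorem A, that theorem then upgrades $g\exp_p$ to an isometry, giving the desired intrinsic isometry $B_{R_3}(p)\cong C_\kappa^{R_3}(\Sigma_p)$. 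The main obstacle is the pointwise Jacobian monotonicity: in the smooth setting it is immediate from the Riccati equation for Jacobi fields, whereas in the Alexandrov setting it must be extracted from iterated Toponogov comparisons on infinitesimal hinges, with careful handling of the singular set and the non-smooth portion of the cut locus.
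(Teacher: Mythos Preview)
Your plan follows the Jacobian/co-area route, which is genuinely different from the paper's proof of the monotonicity. The paper deliberately avoids any co-area formula (see the remarks in the introduction) and instead gives an elementary finite-scale argument: the inequality is rewritten as a telescoping sum over thin annuli (equations (\ref{e4.1.1.2})--(\ref{e4.1.1.4})), and for each pair $A_{R-\delta}^R$, $A_{r-\lambda\delta}^r$ with $\lambda=\snk r/\snk R$ one proves directly from the cosine law in $S^2_\kappa$ that the radial contraction map is bi-Lipschitz with constant $\approx\lambda$ (Lemma~\ref{t4.1.5}), hence $\vol{A_{r-\lambda\delta}^r}/\vol{A_{R-\delta}^R}\geq(1-\tau(\delta))\lambda^n$ (Corollary~\ref{t4.1.7}). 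A specially chosen infinite partition (\ref{e4.1.8}) then converts this into the integral inequality. For the rigidity the two arguments agree: equality forces $\vol{B_{R_3}(p)}=\vol{C_\kappa^{R_3}(\Sigma_p)}$ (the paper obtains this via Lemma~\ref{bs.rig}), after which Theorem~\ref{open.iso}---that is, Lemmas~\ref{exp.delta} and \ref{exp} feeding into Theorem~A---yields the intrinsic isometry, exactly as you outline.

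The gap in your proposal is precisely the step you yourself flag as the main obstacle. You posit a pointwise density $J(\xi,t)$ and the monotonicity of $J(\xi,t)/\snk^{n-1}(t)$, but in an Alexandrov space $g\exp_p$ is merely Lipschitz, so neither the existence of $J$ as a pull-back density nor its monotone ratio can be ``extracted from Toponogov's theorem applied to thin hinges'' without a genuine substitute for Jacobi-field calculus; this is where a co-area formula for Alexandrov spaces (the approach the paper attributes to [BBI]) would enter, and you have not supplied it. As written, your argument is a program rather than a proof. What the paper's route buys is freedom from that analytic machinery: Lemma~\ref{t4.1.5} is a finite-distance bi-Lipschitz estimate proved by bare hands from the cosine law, and the passage to the limit is pure calculus on Riemann sums. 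What your route would buy, if the Jacobian step were carried out, is a cleaner conceptual picture and a more direct path to the equality case.
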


%\vskip2mm

\begin{remark} The Riemannian version of Bishop-Gromov relative
comparison for Alexandrov spaces (i.e., the model space is $S^n_\kappa$)
was stated in [BGP] (cf. [BBI]). A notable
difference between Theorem D and the Riemannian version
is in the rigidity part: the later is the {\it absolute maximum}
volume rigidity and its model space is {\it unique}, while the former may
be viewed as the {\it relatively maximum} volume rigidity (relatively to
$\Sigma_p$), whose model spaces are of {\it infinitely many} possibilities. Moreover, the proof of Theorem D is considerably difficult; for instance, a dimension-inductive argument (which works in the Riemannian version) does not work.
\end{remark}

%\vskip2mm

\begin{remark} By Lemma \ref{open.iso} in [LR], we see that $\frac{\text{vol}(C^R_\kappa(\Sigma_p))}{\text{vol}
(C^r_\kappa(\Sigma_p))}=\frac{\text{vol}(B_R(S^n_\kappa))}{\text{vol}
(B_r(S^n_\kappa))}$ and thus the monotonicity part of Theorem D coincides with that in the Riemannian version.
We point out that our proof of the volume ratio monotonicity in
Theorem D is different from one suggested by [BGP]; we take
an elementary (calculus) approach via finding a (unconventional)
partition suitable for triangle comparison arguments while a proof
in [BBI] relies on a co-area formula for Alexandrov spaces.
\end{remark}

%\vskip2mm

We now give some indication on our approach to Theorem A and Theorem B. In the proof of Theorem A,
we shall show that $f$ is a homeomorphism and $f$ preserves the length of curves. Based on
basic properties of an Alexandrov space (not necessarily complete), any curve $c$ in $X$ can be approximated
by piecewise geodesics $c_i$ in $X^{\delta_i}$
($\delta_i\to 0$) such that lengths $L(c_i)\to L(c)$. Thus, it suffices to show
that when restricting to $f^{-1}(X^\delta)$ and $X^\delta$ respectively, $f$ is injective
and $f^{-1}$ preserves the length of any geodesic up to an error $\tau(\delta)\to 0$ as $\delta\to 0$
respectively.
We derive this with a volume formula for a tube-like $\epsilon$-balls in
$X^\delta$ which can be treated as a replacement of the volume formula of a thin tube
around a curve. The proof of the volume formula
is a based on the fact that a small ball at an $(n,\delta)$-strained
point can be almost isometrically embedded into $\Bbb R^n$
(see [BGP]).

Our approach to Theorem B consists of two steps: first, establishing
the open ball rigidity: the gradient-exponential map $g\exp_p: C^R_\kappa
(\Sigma_p)\to B_R(p)\subset X$ is an isometry with respect to the intrinsic
distance. We achieve this by showing that $g\exp_p$ satisfies the
condition in Theorem A (see Lemma \ref{exp.delta} and Lemma \ref{exp}). Consequently,
$X=\bar C^R_\kappa (\Sigma_p)/\sim$, where $\sim$ is a relation
on $\Sigma_p\times \{R\}$: $\tilde x\sim \tilde y$ if and only if $g\exp_p(\tilde x)
=g\exp_p(\tilde y)$.
Observe that if $\tilde x\ne \tilde y\in \Sigma_p\times \{R\}$ with
$\tilde x\sim \tilde y$, then the $g\exp_p$-images of the two geodesics $\geod{\tilde o
\tilde x}$ and $\geod{\tilde o\tilde y}$
together form a local geodesic at $g\exp_p\tilde x=g\exp_p\tilde y$.
Because a geodesic does not bifurcate, any equivalent class contains at most
two points and thus we obtain an involution $\phi: \Sigma_p\times
\{R\}\to \Sigma_p\times \{R\}$ such that $X=\bar
C^R_\kappa(\Sigma)/\tilde x\sim \phi(\tilde x), \tilde x\in \Sigma_p\times \{R\}$.
The main difficulty is to show that $\phi$ is an isometry. Our main technical lemma is to show that $\phi$ is almost 1-bi-Lipschitz up to a uniform error: $\left|\frac{|\phi(\tilde x)\phi(\tilde y)|}{|\tilde x\tilde y|}-1\right|\le 20|\tilde x\tilde y|$ for
$|\tilde x\tilde y|$ small (see Lemma \ref{non-cross}). This implies that $\phi$ is continuous and preserves the length of a path, and thus $\phi$ is distance non-increasing. Consequently, $\phi$ is an isometry since $\phi$ is an involution. Note that without the curvature lower bound, this in general does not imply that the metric on $X=\bar
C^R_\kappa(\Sigma)/\tilde x\sim \phi(\tilde x)$ coincides with the induced metric. For example, $X=\bar C^1_0(\mathbb S^1_1)/(\tilde x\sim \tilde x)=\bar B_1(\mathbb R^2)$ is equipped with the length metric coincides with the Euclidean metric when restricted to the interior, and $L(\gamma)$ is a half of the Euclidean arc length for any $\gamma\subset\partial X$. Our proof relies on the curvature lower bound as well as the cone metric.

Let $L_p(X)=g\exp_p(\Sigma\times\{R\})$, which locally divides a tubular neighborhood of $L_p(X)$ into two components $U_1$, $U_2$. The main difficulty in proving the above inequality is that a geodesic in $X$ connecting 2 points $a,b\in L_p(X)$ may intersect with $L_p(X)$ at many points other than $a,b$ (called {\it crossing points}). We show that if a geodesic is not contained in $L_p(X)$, then the crossing points are discrete (Corollary \ref{stay.bdy}). Thus we can reduce the proof to the case that $c_1=\geod{ab}\subset U_1$ has no crossing point. It's sufficient to construct a non-crossing piece-wise intrinsic geodesic $c_2\subset U_2$ connecting $a,b$, and show that length$(c_2)$ is close to length$(c_1)=|ab|$ up to a second order error (Lemma \ref{non-cross}).

We remark that
the present proof, in an essential way, relies on the $\kappa$-cone
metric structure; and we believe that establishing a similar inequality
in general will be the main obstacle in Conjecture 0.1.

%\vskip2mm

The rest of the paper is organized as follows:

In Section 1, we will prove Theorem A.

In Section 2, we will prove Theorem B.

In Section 3, we will prove Theorem C.

In Section 4, we will prove Theorem D.

\vskip4mm

\section{$(n.\delta)$-strained isometry}

\vskip4mm

Let $f: Z\to X$ be as in Theorem A. We will establish that $f$ is an isometry
through the following properties:

\noindent (i) If a distance non-increasing onto map $f$ preserves the volume of the total spaces, then $f$ and
$f^{-1}$ preserve volumes of any subsets (see Lemma \ref{vol.preserve}).

\noindent (ii) Based on a local bi-Lipschitz embedding property
(see Lemma \ref{bgp9.4}), we show that for $\delta$ suitably
small, $f$ is injective on $f^{-1}(X^\delta)\subseteq Z^{\tau(\delta)}$. In particular, for any curve
$c\subset X^\delta$, $f^{-1}(c)\subseteq Z^{\tau(\delta)}$ is a curve (see Lemma
\ref{f.homeo}).

\noindent (iii) Our main technical lemma is a volume formula for a `tube' of
$\epsilon$-balls (which can be treated as a replacement for an $\epsilon$-tube around
a curve, see Lemma \ref{balltube}). Together with (i) and (ii), this formula
implies that $f^{-1}$ preserves the length of any geodesic in $X^{\delta}$ up
to an error $\tau(\delta)$.
Because for any small $\delta$ ($\delta<\frac 1{8n}$), $X^\delta$ is dense in $X$ (see Lemma \ref{delta.lip}), we are able to show that $f$ is also distance non-decreasing
and thus $f$ is an isometry.

%\vskip2mm

\begin{lem}\label{vol.preserve} Let $f: Z\to X$ be a distance non-increasing onto map of two metric spaces
of equal Hausdorff dimension. If $\text{vol}(X)=\text{vol}(Z)$, then for any
subset $A\subseteq Z$ and $B\subseteq X$,
$$\text{vol}(A)=\text{vol}(f(A)),\quad \text{vol}(B)=\text{vol}(f^{-1}(B)).$$
\end{lem}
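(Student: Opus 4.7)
The plan is to exploit the single property that any $1$-Lipschitz map between metric spaces of the same Hausdorff dimension $n$ cannot increase $n$-dimensional Hausdorff measure, together with the hypothesis $\mathrm{vol}(Z)=\mathrm{vol}(X)$ to upgrade this one-sided inequality into equality on every subset. Concretely, the starting observation is that for every $A\subseteq Z$,
\[
\mathrm{vol}(f(A))\le \mathrm{vol}(A),
\]
which is the standard behavior of Hausdorff $n$-measure under $1$-Lipschitz maps (proved by applying the map to an arbitrary $\delta$-cover of $A$).

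The first equality is obtained by a splitting argument. For an arbitrary $A\subseteq Z$, I would write $Z=A\sqcup A^c$ and note that since $f$ is surjective, $X=f(A)\cup f(A^c)$, though typically not disjointly. Using additivity on $Z$, the Lipschitz inequality on each piece, and subadditivity of the outer measure on $X$, I get
\[
\mathrm{vol}(Z)=\mathrm{vol}(A)+\mathrm{vol}(A^c)\ge \mathrm{vol}(f(A))+\mathrm{vol}(f(A^c))\ge \mathrm{vol}(f(A)\cup f(A^c))=\mathrm{vol}(X).
\]
Since $\mathrm{vol}(Z)=\mathrm{vol}(X)$ by hypothesis, every inequality above is forced to be an equality, and in particular $\mathrm{vol}(A)=\mathrm{vol}(f(A))$. (This also tells us, as a by-product, that the overlap $f(A)\cap f(A^c)$ has measure zero.)

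The second equality then follows by applying the first to $A=f^{-1}(B)$, for which set-theoretically $A^c=f^{-1}(B^c)$, $f(A)\subseteq B$, and $f(A^c)\subseteq B^c$. From the first equality,
\[
\mathrm{vol}(f^{-1}(B))=\mathrm{vol}(f(f^{-1}(B)))\le\mathrm{vol}(B),\qquad \mathrm{vol}(f^{-1}(B^c))=\mathrm{vol}(f(f^{-1}(B^c)))\le\mathrm{vol}(B^c).
\]
Adding and comparing with $\mathrm{vol}(Z)=\mathrm{vol}(X)$ collapses both inequalities to equalities, giving $\mathrm{vol}(f^{-1}(B))=\mathrm{vol}(B)$. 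There is no real obstacle here; the only point requiring a moment's care is the non-injectivity of $f$, which is precisely what makes $f(A)\cup f(A^c)$ a non-disjoint cover of $X$, and it is handled automatically by replacing additivity with subadditivity in the middle step of the chain.
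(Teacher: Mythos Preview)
Your proof is correct and follows essentially the same approach as the paper: decompose $Z$ into $A$ and its complement, apply the $1$-Lipschitz inequality $\mathrm{vol}(f(\cdot))\le\mathrm{vol}(\cdot)$ to each piece, and use subadditivity on $X$ together with $\mathrm{vol}(Z)=\mathrm{vol}(X)$ to force equality. The paper phrases this as a contradiction (assume $\mathrm{vol}(A)>\mathrm{vol}(f(A))$ and derive $\mathrm{vol}(Z)>\mathrm{vol}(X)$), while you write the same chain directly and observe it collapses; the content is identical.
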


%\vskip2mm

\begin{proof} We argue by contradiction; assuming that $\text{vol}(A)>\text{vol}(f(A))$.
Then
\begin{align*}
\text{vol}(Z)
&=\text{vol}(A)+\text{vol}(Z-A)>\text{vol}(f(A))+\text{vol}(f(Z-A))
\\
&\ge \text{vol}(f(Z))=\text{vol}(X),
\end{align*}
a contradiction. Similarly, one can check that $\text{vol}(f^{-1}(B))=\text{vol}(B)$.
\end{proof}

%\vskip2mm

Let $X^\delta(\rho)$ denote the union of points with an $(n,\delta)$-strainer $\{(a_i,b_i)\}$
of radius $\rho>0$, where $\dsp\rho=\min_{1\le i\le n}\{|pa_i|, |pb_i|\}>0$.

%\vskip2mm

\begin{lem}[{[BGP]} Theorem 9.4]\label{bgp9.4}
Let $X\in \Alexnk$. If $p\in X^\delta(\rho)$, then the
map $\psi:X\rightarrow \Bbb R^n$ defined by $\psi(x)
  =(|a_1x|, \cdots, |a_nx|)$ maps a small neighborhood $U$ of $p$ $\tau(\delta,\delta_1)$-almost isometrically
  onto a domain in $\Bbb R^n$, i.e. $||\psi(x)\psi(y)|-|xy||<\tau(\delta,\delta_1)|xy|$ for any
  $x,y\in U$, where $\delta_1=\rho^{-1}\cdot\text{diam}(U)$. In particular, $\psi$ is an $\tau(\delta)$-almost isometric
embedding when restricting to $B_{\delta\rho}(p)$.
\end{lem}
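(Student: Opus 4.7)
The plan is to verify the two assertions separately: (i) that on the neighborhood $U$ of the $(n,\delta)$-strained point $p$ the coordinate map $\psi(x)=(|a_1x|,\dots,|a_nx|)$ is bi-Lipschitz with constant $1+\tau(\delta,\delta_1)$, and (ii) that the restriction to $B_{\delta\rho}(p)$ is a genuine $\tau(\delta)$-almost isometric embedding. The guiding intuition is that an $(n,\delta)$-strainer exhibits $\{\xi_i:=\drn_p^{a_i}\}_{i=1}^n$ as a near-orthonormal frame in $\Sigma_p$, so $\psi$ should mimic a Euclidean coordinate chart and distances should be recoverable by Pythagoras.

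First I would show that $|\angle(\xi_i,\xi_j)-\pi/2|\le\tau(\delta)$ for $i\ne j$. Toponogov comparison applied to the triangles $a_ipa_j$, $a_ipb_j$ and $b_ipb_j$, together with the defining inequality $|\angle a_ipb_i-\pi|\le\delta$ of the strainer, forces these angles to be $\tau(\delta)$-close to $\pi/2$. It then follows by elementary spherical trigonometry in $\Sigma_p$ that $\sum_i\cos^2\angle(\xi_i,\eta)$ lies within $\tau(\delta)$ of $1$ for every $\eta\in\Sigma_p$; that is, $\{\xi_i\}$ is a $\tau(\delta)$-approximate orthonormal basis.

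Next I would establish a quantitative first variation formula: for every $x\in U$,
\[
|a_ix|=|a_ip|-|px|\cos\angle(\xi_i,\drn_p^x)+\tau(\delta,\delta_1)\cdot|px|.
\]
The asymptotic first variation follows from hinge comparison at $p$; to upgrade it to a linear-in-$|px|$ error one uses that the ratio $|px|/|pa_i|$ is at most $\delta_1$, that the curvature lower bound $\kappa$ contributes only a quadratic correction over a ball of radius $\rho\delta_1$, and that $(a_i,b_i)$ being a $\tau(\delta)$-straight strainer bounds the hinge defect by $\tau(\delta,\delta_1)|px|$. Subtracting the formulas for $x,y\in U$ and forming $v_x:=|px|(\cos\angle(\xi_i,\drn_p^x))_i\in\mathbb R^n$, analogously $v_y$, one gets
\[
|\psi(x)-\psi(y)|^2=|v_x-v_y|^2+\tau(\delta,\delta_1)\cdot\max(|px|,|py|)^2.
\]
By the near-orthonormality above, $|v_x-v_y|^2$ agrees up to relative error $\tau(\delta)$ with $|px|^2+|py|^2-2|px||py|\cos\angle(\drn_p^x,\drn_p^y)$, and this quantity equals $|xy|^2$ up to relative error $\tau(\delta_1)$ by Toponogov's law of cosines applied at $p$ in the triangle $xpy$. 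Combining these yields the advertised multiplicative estimate, and the ``in particular'' statement follows by specializing $U=B_{\delta\rho}(p)$, which forces $\delta_1\le 2\delta$.

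The technical heart, and the main obstacle, is the quantitative first variation formula: the usual statement is only asymptotic in $|px|$, and pinning down a truly linear error $\tau(\delta,\delta_1)|px|$ with constants depending only on the strainer quality and the scale requires careful bookkeeping of hinge-comparison defects. This is precisely where the hypothesis that $\rho$ bounds all $|pa_i|,|pb_i|$ from below is used, and it is the core content of [BGP, \S9].
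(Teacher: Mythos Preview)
The paper does not give its own proof of this lemma; it is quoted verbatim from [BGP, Theorem~9.4] and used as a black box. So there is nothing in the paper to compare your argument against, only the original proof in [BGP].

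Your outline contains a genuine gap. By anchoring the first-variation expansion at $p$ and writing $|a_ix|=|a_ip|-(v_x)_i+E_x^i$ with $|E_x^i|\le\tau(\delta,\delta_1)\,|px|$, the error you record in $|\psi(x)-\psi(y)|^2$ is of size $\tau(\delta,\delta_1)\max(|px|,|py|)^2$. Likewise the near-orthonormality identity $\sum_i\cos\angle(\xi_i,\eta_1)\cos\angle(\xi_i,\eta_2)=\cos\angle(\eta_1,\eta_2)+O(\tau(\delta))$ contributes an \emph{additive} error of size $\tau(\delta)\bigl(|px|^2+|py|^2\bigr)$ to $|v_x-v_y|^2$, not a relative one. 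Neither of these is controlled by $|xy|^2$: when $x,y$ are very close to one another but sit at distance $\sim\rho\delta_1$ from $p$, the error term dominates and you cannot extract the multiplicative estimate $\bigl|\,|\psi(x)\psi(y)|-|xy|\,\bigr|<\tau\,|xy|$.

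The remedy, and this is exactly how [BGP, \S9] argues, is to expand at $x$ rather than at $p$. Because $x\in U$ lies within $\rho\delta_1$ of $p$, the same pairs $(a_i,b_i)$ form an $(n,\tau(\delta,\delta_1))$-strainer at $x$ as well; the strainer inequalities at $x$ then give
\[
|a_iy|-|a_ix|=-|xy|\cos\angle(\drn_x^{a_i},\drn_x^y)+\tau(\delta,\delta_1)\,|xy|
\]
directly, with the error already proportional to $|xy|$. Squaring, summing over $i$, and invoking near-orthonormality of $\{\drn_x^{a_i}\}$ in $\Sigma_x$ yields $|\psi(x)-\psi(y)|^2=(1+\tau)\,|xy|^2$ with no reference to $|px|$ at all. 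Your step (i) (near-orthonormality of the strainer directions) and your diagnosis that the quantitative first-variation is the technical heart are both correct; the issue is only where that expansion is centered.
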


A consequence of Lemma \ref{bgp9.4} is that
$$1-\tau(\delta)\le \frac{\text{vol}(B_\epsilon(p))}{\text{vol}(B_\epsilon(\Bbb R^n))}
\le 1+\tau(\delta),$$
for any $p\in X^\delta(\rho)$ and $\epsilon\le\delta\rho$.

\begin{lem} \label{f.homeo} Let the assumptions be as in Theorem A. Then
$f: f^{-1}(X^\delta)\to X^\delta$ is injective. Consequently, if $\gamma\subset
X^\delta$ is a continuous curve, then $f^{-1}(\gamma)$ is also a continuous curve.
\end{lem}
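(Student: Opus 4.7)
The plan is to obtain both conclusions from a single volume-packing argument that combines Lemma \ref{vol.preserve} with the almost-isometric embedding at $(n,\delta)$-strained points provided by Lemma \ref{bgp9.4}; throughout I take $\delta$ small enough that $\tau(\delta)<1/3$. For the injectivity half, suppose toward a contradiction that $z_1\neq z_2$ in $f^{-1}(X^\delta)$ share a common image $x\in X^\delta$. The Theorem A hypothesis places $z_1,z_2\in Z^{\tau(\delta)}$, so each carries an $(n,\tau(\delta))$-strainer of positive radius, and $x$ carries an $(n,\delta)$-strainer of positive radius. Pick $\epsilon>0$ with $\epsilon<\tfrac12|z_1z_2|$ and small enough that Lemma \ref{bgp9.4} yields $\tau(\delta)$-almost isometric embeddings of $B_\epsilon(z_1),B_\epsilon(z_2),B_\epsilon(x)$ into $\Bbb R^n$. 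Then the disjoint balls $B_\epsilon(z_1),B_\epsilon(z_2)\subset Z$ satisfy $\text{vol}(B_\epsilon(z_i))\geq(1-\tau(\delta))\text{vol}(B_\epsilon(\Bbb R^n))$, while distance non-increase sends their union into $B_\epsilon(x)$, whose volume is at most $(1+\tau(\delta))\text{vol}(B_\epsilon(\Bbb R^n))$. Applying Lemma \ref{vol.preserve} to $A:=B_\epsilon(z_1)\cup B_\epsilon(z_2)$ gives $\text{vol}(A)=\text{vol}(f(A))\leq\text{vol}(B_\epsilon(x))$, whence $2(1-\tau(\delta))\leq 1+\tau(\delta)$, i.e.\ $\tau(\delta)\geq 1/3$, a contradiction.

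For the curve half, given a continuous $\gamma:[0,1]\to X^\delta$ define $\tilde\gamma:=f^{-1}\circ\gamma$, well-defined into $f^{-1}(X^\delta)\subset Z^{\tau(\delta)}$ by the injectivity just proved; the task is continuity of $\tilde\gamma$ at each $t_0\in[0,1]$. If this fails, some subsequence provides $t_k\to t_0$ with $|z_kz_0|\geq c>0$, where $z_k:=\tilde\gamma(t_k)$, $z_0:=\tilde\gamma(t_0)$; setting $x_k:=\gamma(t_k)$, $x_0:=\gamma(t_0)$ and $\eta_k:=|x_kx_0|$, the continuity of $\gamma$ yields $\eta_k\to 0$. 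Rerunning the packing argument on the pair $B_\epsilon(z_0),B_\epsilon(z_k)$ for fixed $\epsilon<c/2$ inside the Lemma \ref{bgp9.4} ranges at $z_0,z_k,x_0$, distance non-increase gives $f(B_\epsilon(z_0)\cup B_\epsilon(z_k))\subset B_{\epsilon+\eta_k}(x_0)$, so that Lemma \ref{vol.preserve} yields
$$2(1-\tau(\delta))\text{vol}(B_\epsilon(\Bbb R^n))\leq (1+\tau(\delta))\text{vol}(B_{\epsilon+\eta_k}(\Bbb R^n)).$$
Letting $k\to\infty$ so that $\eta_k\to 0$ recovers $2(1-\tau(\delta))\leq 1+\tau(\delta)$, again a contradiction. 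Hence $z_k\to z_0$ and $\tilde\gamma$ is continuous.

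The delicate point is securing the almost-isometric bound on $B_\epsilon(z_k)$ via Lemma \ref{bgp9.4} with a single $\epsilon$ independent of $k$, since the hypothesis only supplies an $(n,\tau(\delta))$-strainer at $z_k$ of some positive radius $\rho_k$ that may a priori degenerate along the subsequence. I would handle this by transporting an $(n,\delta)$-strainer from $x_k$, whose radius is uniformly close to that at $x_0$ by the openness of the strainer condition in $X$, back to $z_k$ by taking preimages of strainer endpoints under the surjection $f$, and then using distance non-increase to verify that the transported configuration remains an $(n,\tau(\delta))$-strainer at $z_k$ of uniformly bounded radius. This is the main technical hurdle hidden behind the otherwise clean volume calculation.
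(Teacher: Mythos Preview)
Your injectivity argument is essentially the paper's: both use Lemma~\ref{vol.preserve} together with the almost-Euclidean volume bounds at strained points (Lemma~\ref{bgp9.4}) to derive $2(1-\tau(\delta))\le 1+\tau(\delta)$, a contradiction for small $\delta$.

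For the curve half, the paper's proof of this lemma actually stops after injectivity; continuity of $f^{-1}$ on $X^\delta$ is obtained only afterward, via Lemma~\ref{delta.lip}, which shows $|f^{-1}(x)f^{-1}(y)|\le 2|xy|$ for nearby $x,y\in X^\delta$ (and whose proof uses only the injectivity part of Lemma~\ref{f.homeo}, so there is no circularity). You are right that the strainer radius at $z_k$ is the obstacle in a direct argument, but your proposed fix---pulling back strainer endpoints through $f$---does not work. Distance non-increase gives $|\tilde a_i z_k|\ge|a_i x_k|$, $|\tilde b_i z_k|\ge|b_i x_k|$, $|\tilde a_i\tilde b_i|\ge|a_ib_i|$, and comparison angles are \emph{not} monotone under such one-sided dilation: if the two legs $|\tilde a_i z_k|,|\tilde b_i z_k|$ grow more than the opposite side $|\tilde a_i\tilde b_i|$, then $\tilde\measuredangle\,\tilde a_i z_k\tilde b_i$ decreases, possibly well below $\pi-\tau(\delta)$. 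So the transported configuration need not be a strainer at all, let alone one of uniform radius. The clean route is the paper's implicit one: once injectivity is in hand, run the same two-ball volume argument with $|xy|=\epsilon$ small (rather than $f(z_1)=f(z_2)$) to obtain the local Lipschitz bound of Lemma~\ref{delta.lip}; continuity of $f^{-1}\circ\gamma$ is then immediate, because the strainer radius along the compact set $\gamma([0,1])\subset X^\delta$ is uniformly bounded below.
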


\begin{proof} We argue by contradiction; assuming $z_1\ne z_2\in f^{-1}(X^\delta)$
such that $f(z_1)=f(z_2)=x$. We may assume that $z_1$ and $z_2$ have
$\tau(\delta)$-strainer of radius $\rho>0$. Choose $4\epsilon<|z_1z_2|$ and $\epsilon<\delta\rho$.
By Lemma \ref{vol.preserve} and the above consequence of Lemma \ref{bgp9.4}, we get
$$1=\frac{\text{vol}(f^{-1}(B_\epsilon(x)))}{\text{vol}(B_\epsilon(x))}\ge
\frac{\text{vol}(B_\epsilon(z_1))+\text{vol}(B_\epsilon(z_2))}{\text{vol}
(B_\epsilon(x))}\ge 2(1-\tau(\delta)),$$
a contradiction.
\end{proof}

We now develop a formula which estimates the volume of an $\epsilon$-ball tube with a higher order error. Let $x_1,x_2,\dots,x_{N+1}$ be $N+1$ points in $X^\delta(\rho)$. We first give an estimate of the volume of the $\epsilon$-ball tube $\bigcup_{i=1}^{N+1} B_\epsilon(x_i)$ in terms of $\dsp\sum_{i=1}^N|x_ix_{i+1}|$ and $\epsilon$, $\delta$ with errors.

\begin{lem}[volume of an $\epsilon$-ball tube]\label{balltube}
Let $X\in\Alexnk$ and $x_i\in X^\delta(\rho)$,
  $i=1,2,\cdots,N+1$ satisfy that $0<|x_ix_{i+1}|< 2\epsilon\ll\delta\rho$ and $B_\epsilon(x_i)\cap
B_\epsilon(x_j)\cap B_\epsilon(x_{k})=\varnothing$ for $i\neq j\neq k$.
Then the volume of the $\epsilon$-ball tube $\bigcup_{i=1}^{N+1}B_\epsilon(x_i)$
(see Figure 1) satisfies:
\begin{align}
&(1+\tau(\delta))\cdot
\vol{\bigcup_{i=1}^{N+1}B_\epsilon(x_i)}
\notag\\
    &\qquad=\vol{B_\epsilon(\mathbb R^n)} +2\epsilon\cdot\vol{B_\epsilon(\mathbb R^{n-1})}\sum_{i=1}^N\int_{\theta_i}^{\frac\pi2}\sin^n(t)dt,
\label{balltube.e01}
\end{align}
  where $\theta_i\in[0,\frac\pi2]$ such that $\cos\theta_i=\frac{|x_ix_{i+1}|}{2\epsilon}$. If in addition, $|x_ix_{i+1}|\le \epsilon^2$ for all $1\le i\le N$, then
\begin{align}
&(1+\tau(\delta))\cdot
\vol{\bigcup_{i=1}^{N+1}B_\epsilon(x_i)}
\notag\\
&\qquad=\vol{B_\epsilon(\mathbb R^n)}
+\vol{B_\epsilon(\mathbb R^{n-1})}
\sum_{i=1}^N|x_ix_{i+1}|+O(\epsilon^{n+1})\sum_{i=1}^N|x_ix_{i+1}|,
\label{balltube.e02}
\end{align}

\end{lem}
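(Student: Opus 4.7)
The plan is to reduce the entire computation to a Euclidean identity via the local almost isometric embeddings of Lemma~\ref{bgp9.4}. Since each $x_i\in X^\delta(\rho)$ admits an $(n,\delta)$-strainer of radius $\rho$ and each consecutive union $B_\epsilon(x_i)\cup B_\epsilon(x_{i+1})$ has diameter at most $4\epsilon\ll\delta\rho$, Lemma~\ref{bgp9.4} furnishes a chart $\psi_i\colon U_i\to\mathbb R^n$ around $x_i$ with $B_\epsilon(x_i)\cup B_\epsilon(x_{i+1})\subset U_i$ whose bi-Lipschitz distortion is at most $1+\tau(\delta)$. Under this chart the Hausdorff $n$-measure of any subset of $U_i$ agrees with the Lebesgue measure of its image up to the same factor, so we may carry the computation out in $\mathbb R^n$ and reinsert the factor $1+\tau(\delta)$ at the end.

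In $\mathbb R^n$, the no-triple-intersection hypothesis, together with the chain-like arrangement of Figure~1 (which forces non-consecutive balls to be disjoint), reduces inclusion--exclusion to the telescoping identity $\vol{\bigcup_{i=1}^{N+1}B_\epsilon(x_i)}=\vol{B_\epsilon(\mathbb R^n)}+\sum_{i=1}^N\vol{B_\epsilon(x_{i+1})\setminus B_\epsilon(x_i)}$. For the lune, set coordinates with $x_i=0$ and $x_{i+1}=(d,0,\ldots,0)$, $d=|x_ix_{i+1}|=2\epsilon\cos\theta_i$. A slice-by-slice comparison of the $(n-1)$-disks shows that, after the inner cap of $B_\epsilon(x_i)$ cancels against the corresponding outer cap of $B_\epsilon(x_{i+1})$, the lune collapses to the single slab
\begin{equation*}
\vol{B_\epsilon(x_{i+1})\setminus B_\epsilon(x_i)}=\int_{-d/2}^{d/2}\vol{B_{\sqrt{\epsilon^2-u^2}}(\mathbb R^{n-1})}\,du=2\epsilon\vol{B_\epsilon(\mathbb R^{n-1})}\int_{\theta_i}^{\pi/2}\sin^n(s)\,ds,
\end{equation*}
where the last equality uses $u=\epsilon\cos s$ and the symmetry of $\sin^n$ about $\pi/2$. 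Summing the lunes and reinserting $1+\tau(\delta)$ yields \eqref{balltube.e01}.

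For \eqref{balltube.e02}, $|x_ix_{i+1}|\le\epsilon^2$ gives $\cos\theta_i\le\epsilon/2$, so $a_i:=\tfrac\pi2-\theta_i$ is small. Writing $s=\tfrac\pi2-u$ turns the angular integral into $\int_0^{a_i}\cos^n(u)\,du=a_i+O(a_i^3)$, and $\sin a_i=|x_ix_{i+1}|/(2\epsilon)$ gives $a_i=|x_ix_{i+1}|/(2\epsilon)+O(|x_ix_{i+1}|^3/\epsilon^3)$. Multiplying by $2\epsilon\vol{B_\epsilon(\mathbb R^{n-1})}$ converts each lune into $\vol{B_\epsilon(\mathbb R^{n-1})}|x_ix_{i+1}|+O(\epsilon^{n-3})|x_ix_{i+1}|^3$; the hypothesis $|x_ix_{i+1}|\le\epsilon^2$ then forces $|x_ix_{i+1}|^2\le\epsilon^4$, absorbing the cubic remainder into $O(\epsilon^{n+1})|x_ix_{i+1}|$, which sums to \eqref{balltube.e02}. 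The main obstacles I expect are (a) assembling the local charts $\psi_i$ so that the Euclidean computation carries a single uniform $1+\tau(\delta)$ distortion rather than one compounding with $N$ (handled by $\epsilon\ll\delta\rho$ giving each chart ample room to contain the relevant overlap region), and (b) verifying that the no-triple-intersection hypothesis together with the chain arrangement genuinely rules out non-consecutive pairwise contributions to inclusion--exclusion, so that the telescoping identity is exact rather than merely approximate.
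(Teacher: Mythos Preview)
Your approach is essentially the paper's: use Lemma~\ref{bgp9.4} to push each local piece into $\mathbb R^n$ with $1+\tau(\delta)$ distortion, compute there, and sum. The paper cuts the union into Voronoi-type ``trapezoidal balls'' $\Gamma^\pm(x_i)$ (via Lemma~\ref{trapz}) while you telescope into lunes $B_\epsilon(x_{i+1})\setminus B_\epsilon(x_i)$, but these yield the identical per-step contribution $2\epsilon\,\vol{B_\epsilon(\mathbb R^{n-1})}\int_{\theta_i}^{\pi/2}\sin^n t\,dt$ (your slab integral $\int_{-d/2}^{d/2}$ is exactly $2\,\vol{\Gamma_\epsilon^{d/2}(\mathbb R^n)}$ in the paper's notation), and your Taylor expansion for~\eqref{balltube.e02} matches the paper's computation of $g(s)$.

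Your obstacle~(b) is genuine, and the paper does not resolve it either. The stated hypotheses do \emph{not} force $B_\epsilon(x_i)\cap B_\epsilon(x_j)=\varnothing$ for $|i-j|\geq 2$: three $\epsilon$-balls centered at the vertices of an equilateral triangle of side $s\in(\sqrt3\,\epsilon,2\epsilon)$ have every pairwise overlap nonempty but empty triple overlap. In that configuration your telescoping identity overcounts, and so does the paper's assertion that ``$H^\pm(x_i)$ consist of all the possible intersections'' of the pieces $A^-(x_1),\Gamma^\pm(x_i),A^+(x_{N+1})$. In the actual applications (proof of Theorem~A) the $x_i$ lie on a geodesic with spacing comparable to $\epsilon$, and the $z_i=f^{-1}(x_i)$ inherit $|z_iz_j|\geq|x_ix_j|\geq 2\epsilon$ for $|i-j|\geq 2$, so non-consecutive disjointness does hold there; but you are right that it is an unstated extra assumption, not a consequence of the hypotheses as written.
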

\begin{center}
\begin{picture}(300,160)
\put(130,5){Figure 1}
\put(130,140){$B_{\delta\rho}(x_i)$}
\put(240,30){$\dsp\bigcup_{i=1}^{N+1}B_\epsilon(x_i)$}
\put(115,60){$x_{i-1}$}
\put(139.5,92){$x_i$}
\put(165,60){$x_{i+1}$}
\put(15,60){$x_{1}$}
\put(265,60){$x_{N+1}$}
%\multiput(20,70)(50,0){6}{\circle{40}}
\multiput(20,70)(50,0){6}{
\dashline{2}(0,-20)(0,20)
\circle*{1}}
\multiput(70,70)(50,0){4}{
    \qbezier[6](	19.7538	,	-3.1287	)(	21.0497	,	5.0538	)(	16.1804	,	 11.7558	 )
    \qbezier[6](	17.5261	,	9.6351	)(	13.5351	,	16.8948	)(	5.5798	,	 19.2060	 )
    \qbezier[6](	-16.1803	,	11.7557	)(	-21.0498	,	5.0534	)(	 -19.7538	 ,	 -3.1288	 )
    \qbezier[6](	-6.1803	,	19.0211	)(	-14.0593	,	16.4611	)(	-17.8202	 ,	 9.0798	 )
\arc{40}{0.16}{2.96}}
\multiput(70,70)(50,0){4}{\arc{40}{4.44}{4.98}}
\put(20,70){
    \qbezier[6](	19.7538	,	-3.1287	)(	21.0497	,	5.0538	)(	16.1804	,	 11.7558	 )
    \qbezier[6](	17.5261	,	9.6351	)(	13.5351	,	16.8948	)(	5.5798	,	 19.2060	 )
\arc{40}{0.16}{4.98}}
\put(270,70){
    \qbezier[6](	-16.1803	,	11.7557	)(	-21.0498	,	5.0534	)(	 -19.7538	 ,	 -3.1288	 )
    \qbezier[6](	-6.1803	,	19.0211	)(	-14.0593	,	16.4611	)(	-17.8202	 ,	 9.0798	 )
\arc{40}{4.44}{2.96}}

%\multiput(45,86)(50,0){5}{\circle{40}}
\multiput(45,86)(50,0){5}{\circle*{1}}
%\dashline{1}(0,0)(-25,-16)
%\dashline{1}(0,0)(25,-16)

%4.95
\multiput(45,86)(50,0){5}{
%4.955
    \qbezier[6](	-19.8005	,	2.8180	)(	-20.9678	,	-5.3838	)(	 -15.9937	 ,	 -12.0085	 )
%5.14
    \qbezier[6](	-18.0965	,	-8.5156	)(	-14.5692	,	-16.0116	)(	 -6.7747	 ,	 -18.8177	)
%3.8
    \qbezier[6](	16.1803	,	-11.7557	)(	21.0498	,	-5.0534	)(	19.7538	,	 3.1288	 )
%3.6
    \qbezier[6](	6.1803	,	-19.0211	)(	14.0593	,	-16.4611	)(	 17.8202	 ,	 -9.0798	 )
\dashline{2}(-19,2)(-5,-19)
\dashline{2}(19,2)(5,-19)
\dashline{2}(0,-20)(0,20)
\arc{40}{3.32}{6.12}
}
\multiput(45,86)(50,0){5}{\arc{40}{1.3}{1.84}}

%\put(145,86){\circle{160}}
  \qbezier[30](	235.0000	,	86.0000	)(	235.0000	,	123.2800	)(	 208.6396	 ,	 149.6400	 )
  \qbezier[30](	81.3604	,	149.6400	)(	55.0000	,	123.2800	)(	55.0000	,	 86.0000	 )
  \qbezier[30](	55.0000	,	86.0000	)(	55.0000	,	48.7200	)(	81.3604	,	 22.3600	 )
  \qbezier[30](	208.6396	,	22.3600	)(	235.0000	,	48.7200	)(	235.0000	 ,	 86.0000	 )
\put(148,112){\small $\Gamma^+(x_i)$}
\put(114,112){\small $\Gamma^-(x_i)$}
\put(-12,96){\small $A^-(x_1)$}
\put(270,96){\small $A^+(x_{N+1})$}
\end{picture}
\end{center}

Because $B_\epsilon(x_{i-1})\cup B_\epsilon(x_i)\cup B_\epsilon(x_{i+1})\subset B_{\delta\rho}(x_i)$, which is $\tau(\delta)$-almost isometrically embedded into $\mathbb R^n$, one can divide $\scup{i=1}{N+1}B_\epsilon(x_i)$ into small pieces $\Gamma^\pm(x_i)$, whose volumes are $(1+\tau(\delta))$-proportional to the volumes of the following ``trapezoidal balls" $\Gamma_\epsilon^{h_i^\pm}(\mathbb R^n)$ in $\mathbb R^n$. This allows us to reduce the calculation to the Euclidean space.

%\newpage

We define the trapezoidal ball $\Gamma_r^h(\mathbb R^n)$ in $\mathbb R^n_+=\{(x_1,x_2,\cdots,x_n): x_n\ge 0\}$ as the following. Let $u\in \mathbb R^n_+$ be a point with $|ou|=h\le r$. Then the hyper plane $H$ passing through $u$ and perpendicular to $\overrightarrow{ou}$ divides the half ball $B_r(\mathbb R^n)\cap \mathbb R^n_+$ into two subsets. Let $\Gamma_r^h(\mathbb R^n)$ be the subset which contains the origin (see Figure 3). It's easy to see that $\vol{\Gamma_r^h(\mathbb R^n)}$ depends only on $h$ and $r$, but not the direction $\overrightarrow{ou}$ as long as $H\cap B_r(\mathbb R^n)\subset \mathbb R^n_+$.

\vskip -2cm
\begin{center}
\begin{picture}(100,100)
\setlength{\unitlength}{0.9pt}
\put(135,0){Figure 2}
\put(137,60){$\Gamma_r^h(\mathbb R^n)$}
\put(60,40){\arc{120}{3.1}{3.5}}
\put(60,40){\arc{120}{4.9}{6.3}}
  \qbezier[12](	120.0000	,	40.0000	)(	120.0000	,	64.8533	)(	102.4264	 ,	 82.4267	 )
\qbezier[12](	102.4264	,	82.4267	)(	84.8529	,	100.0000	)(	60.0000	,	 100.0000	 )
\qbezier[12](	60.0000	,	100.0000	)(	35.1471	,	100.0000	)(	17.5736	,	 82.4267	 )
\qbezier[12](	17.5736	,	82.4267	)(	0.0000	,	64.8533	)(	0.0000	,	 40.0000	 )

  \qbezier[10](	120.0000	,	40.0000	)(	120.0000	,	48.2844	)(	102.4264	 ,	 54.1422	 )
  \qbezier[10](	102.4264	,	54.1422	)(	84.8529	,	60.0000	)(	60.0000	,	 60.0000	 )
  \qbezier[10](	60.0000	,	60.0000	)(	35.1471	,	60.0000	)(	17.5736	,	 54.1422	 )
  \qbezier[10](	17.5736	,	54.1422	)(	0.0000	,	48.2844	)(	0.0000	,	 40.0000	 )
\qbezier(	0.0000	,	40.0000	)(	0.0000	,	31.7156	)(	17.5736	,	25.8578	 )
\qbezier(	17.5736	,	25.8578	)(	35.1471	,	20.0000	)(	60.0000	,	20.0000	 )
  \qbezier(	60.0000	,	20.0000	)(	84.8529	,	20.0000	)(	102.4264	,	 25.8578	 )
  \qbezier(	102.4264	,	25.8578	)(	120.0000	,	31.7156	)(	120.0000	,	 40.0000	 )

\qbezier[10](	71.8693	,	98.8143	)(	70.6267	,	100.9666	)(	59.6815	,	 96.6766	 )
\qbezier[10](	59.6815	,	96.6766	)(	48.7363	,	92.3866	)(	34.5000	,	 84.1673	 )
\qbezier[10](	34.5000	,	84.1673	)(	20.2637	,	75.9480	)(	11.0759	,	 68.6142	 )
\qbezier[10](	11.0759	,	68.6142	)(	1.8880	,	61.2804	)(	3.1307	,	 59.1280	 )
\qbezier(	3.1307	,	59.1280	)(	4.3733	,	56.9756	)(	15.3185	,	61.2657	 )
\qbezier(	15.3185	,	61.2657	)(	26.2637	,	65.5557	)(	40.5000	,	73.7750	 )
\qbezier(	40.5000	,	73.7750	)(	54.7363	,	81.9943	)(	63.9241	,	89.3281	 )
\qbezier(	63.9241	,	89.3281	)(	73.1120	,	96.6619	)(	71.8693	,	98.8143	 )

\dashline{2}(60,40)(37.5,78.9711)
\dashline{2}(3.1307	,	59.1280)(71.8693	,	98.8143)
\dashline{2}(60,40)(71.8693	,	98.8143)
\put(60,40){\arc{15}{4.2}{5}}

\put(56,50){$\theta$}
\put(70,65){$r$}
\put(40,50){$h$}
\put(65,35){$o$}
\put(30,82){$u$}
\end{picture}
\hskip 1in
\begin{picture}(100,100)
\setlength{\unitlength}{0.9pt}
%\put(50,0){Figure 3}
%\put(150,40){$\Gamma_r^h(\mathbb R^n)$}
\put(60,40){\arc{120}{3.1}{4}}
\put(60,40){\arc{120}{5.4}{6.3}}
  \qbezier[12](	102.4264	,	82.4267	)(	84.8529	,	100.0000	)(	60.0000	,	 100.0000	 )
  \qbezier[12](	60.0000	,	100.0000	)(	35.1471	,	100.0000	)(	17.5736	,	 82.4267	 )

  \qbezier[10](	120.0000	,	40.0000	)(	120.0000	,	48.2844	)(	102.4264	 ,	 54.1422	 )
  \qbezier[10](	102.4264	,	54.1422	)(	84.8529	,	60.0000	)(	60.0000	,	 60.0000	 )
  \qbezier[10](	60.0000	,	60.0000	)(	35.1471	,	60.0000	)(	17.5736	,	 54.1422	 )
  \qbezier[10](	17.5736	,	54.1422	)(	0.0000	,	48.2844	)(	0.0000	,	 40.0000	 )
\qbezier(	0.0000	,	40.0000	)(	0.0000	,	31.7156	)(	17.5736	,	25.8578	 )
\qbezier(	17.5736	,	25.8578	)(	35.1471	,	20.0000	)(	60.0000	,	20.0000	 )
  \qbezier(	60.0000	,	20.0000	)(	84.8529	,	20.0000	)(	102.4264	,	 25.8578	 )
  \qbezier(	102.4264	,	25.8578	)(	120.0000	,	31.7156	)(	120.0000	,	 40.0000	 )

  \qbezier[10](	99.0000	,	85.0000	)(	99.0000	,	87.4853	)(	87.5772	,	 89.2427	 )
  \qbezier[10](	87.5772	,	89.2427	)(	76.1544	,	91.0000	)(	60.0000	,	 91.0000	 )
  \qbezier[10](	60.0000	,	91.0000	)(	43.8456	,	91.0000	)(	32.4228	,	 89.2427	 )
  \qbezier[10](	32.4228	,	89.2427	)(	21.0000	,	87.4853	)(	21.0000	,	 85.0000	 )
\qbezier(	21.0000	,	85.0000	)(	21.0000	,	82.5147	)(	32.4228	,	80.7573	 )
\qbezier(	32.4228	,	80.7573	)(	43.8456	,	79.0000	)(	60.0000	,	79.0000	 )
\qbezier(	60.0000	,	79.0000	)(	76.1544	,	79.0000	)(	87.5772	,	80.7573	 )
\qbezier(	87.5772	,	80.7573	)(	99.0000	,	82.5147	)(	99.0000	,	85.0000	 )

\dashline{2}(60,40)(60,85)
\dashline{2}(60,40)(100,85)
\dashline{2}(20,85)(100,85)
\put(60,40){\arc{15}{4.7}{5.5}}

\put(63,50){$\theta$}
\put(88,58){$r$}
\put(50,57){$h$}
\put(65,35){$o$}
\put(57,89){$u$}
\end{picture}
\end{center}

\begin{lem}\label{trapz} Let $\Gamma_r^h(\mathbb R^n)$ be a trapezoidal ball defined as the above.
Then
$$\dsp\vol{\Gamma_r^h(\mathbb R^n)}
=r\cdot\vol{B_r(\mathbb R^{n-1})}\int_{\theta}^{\pi/2}\sin^n (t)\,dt,$$
where $\theta\in[0,\frac\pi2]$ such that $r\cos\theta=h$.
\end{lem}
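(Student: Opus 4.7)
The plan is to compute the volume by first exploiting the stated direction-independence of $\vol{\Gamma_r^h(\mathbb R^n)}$ to reduce to a convenient choice of $u$, and then to evaluate the resulting integral by a single change of variable. For the invariance claim, I would observe that $\Gamma_r^h(\mathbb R^n)$ is the complement in the half-ball $B_r(\mathbb R^n)\cap\mathbb R^n_+$ of the spherical cap $C=\{x\in B_r(\mathbb R^n):\langle x,u/h\rangle>h\}$; under the standing hypothesis $H\cap B_r(\mathbb R^n)\subset\mathbb R^n_+$, the whole cap $C$ lies in $\mathbb R^n_+$, because any $x\in C$ sits between the disk $H\cap B_r(\mathbb R^n)$ and the piece of $\partial B_r(\mathbb R^n)$ containing $u$, both of which are in the closed half-space. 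Hence
\[
\vol{\Gamma_r^h(\mathbb R^n)}=\tfrac12\vol{B_r(\mathbb R^n)}-\vol{C},
\]
and each summand is a rotational invariant depending only on $r$ and $h$.

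With invariance in hand, I would take $u=(0,\ldots,0,h)$, so that $\mathbb R^n_+=\{x_n\ge 0\}$, $H=\{x_n=h\}$, and
\[
\Gamma_r^h(\mathbb R^n)=\{x\in B_r(\mathbb R^n):0\le x_n\le h\}.
\]
By Fubini, slicing along $x_n$ gives
\[
\vol{\Gamma_r^h(\mathbb R^n)}=\int_0^h\vol{B_{\sqrt{r^2-x_n^2}}(\mathbb R^{n-1})}\,dx_n=\vol{B_1(\mathbb R^{n-1})}\int_0^h(r^2-x_n^2)^{(n-1)/2}\,dx_n.
\]
The substitution $x_n=r\cos t$ (so $dx_n=-r\sin t\,dt$, and $x_n\in[0,h]$ corresponds to $t\in[\theta,\pi/2]$ with $r\cos\theta=h$) turns the integrand into $r^{n-1}\sin^{n-1}t\cdot r\sin t=r^n\sin^n t$, and using $\vol{B_r(\mathbb R^{n-1})}=r^{n-1}\vol{B_1(\mathbb R^{n-1})}$ yields the stated formula.

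The argument has no serious obstacle. The only subtle point is verifying direction-independence in the first step, which is precisely where the hypothesis $H\cap B_r(\mathbb R^n)\subset\mathbb R^n_+$ enters; once $u$ is aligned with the $x_n$-axis, the rest is a standard slicing integral followed by a sine-power change of variables.
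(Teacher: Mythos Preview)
Your proof is correct and follows essentially the same route as the paper: slice by the height coordinate to get $\int_0^h \vol{B_{\sqrt{r^2-s^2}}(\mathbb R^{n-1})}\,ds$, then substitute $s=r\cos t$ to produce the $\sin^n t$ integrand. The only difference is that you spell out the direction-independence argument (which the paper states without proof just before the lemma), but the core computation is identical.
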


\begin{proof}
Let $s=r\cos t\in[0,h]$ be the parameter for the height with the
corresponding angle $t\in[\theta,\frac\pi2]$. Then
\begin{align*}
\vol{\Gamma_r^h(\mathbb R^n)}
&=\int_{0}^{h}\vol{B_{r\sin t}(\mathbb R^{n-1})}ds
=\int_{\theta}^{\pi/2}\vol{B_{r\sin t}(\mathbb R^{n-1})}
r\sin (t)\,dt
\\
&=r\cdot\vol{B_r(\mathbb R^{n-1})}\int_{\theta}^{\pi/2}
\sin^n (t)\,dt.
\end{align*}
\end{proof}

\begin{proof}[{\bf Proof of the volume formula, Lemma \ref{balltube}}]
Because $B_\epsilon(x_i)
\cap B_\epsilon(x_{i+1})\neq\varnothing$ and $B_\epsilon(x_i)
  \cap B_\epsilon(x_j)\cap B_\epsilon(x_k)=\varnothing$ for any $i\neq j\neq k$, we can decompose $\scup{i=1}{N+1}B_\epsilon(x_i)$ as the following (see Figure 0.2): let
\begin{align*}
A^+(x_i)&=\{q\in B_\epsilon(x_i):\;|qx_i|\le|qx_{i+1}|\},
\\
A^-(x_i)&=\{q\in B_\epsilon(x_i):\;|qx_i|\le|qx_{i-1}|\}.
\end{align*}
For $i=2,3,\cdots,N$, let
\begin{align*}
    H^+(x_i)&=A^+(x_i)\cap A^-(x_{i+1})=\{q\in B_\epsilon(x_i)\cap B_\epsilon(x_{i+1}):\;|qx_i|=|qx_{i+1}|\},
\\
    H^-(x_i)&=A^-(x_i)\cap A^+(x_{i-1})=\{q\in B_\epsilon(x_i)\cap B_\epsilon(x_{i-1}):\;|qx_i|=|qx_{i-1}|\};
\end{align*}
and
\begin{align*}
\Gamma^+(x_i)&=\{q\in A^+(x_i)\cap A^-(x_i):\;d(q,H^+(x_i))\le d(q,H^-(x_i))\},
\\
\Gamma^-(x_i)&=\{q\in A^+(x_i)\cap A^-(x_i):\;d(q,H^+(x_i))\ge d(q,H^-(x_i))\}.
\end{align*}
By the construction,
$$\scup{i=1}{N+1}B_\epsilon(x_i)
=A^-(x_1)\cup\left(\scup{i=2}N\Gamma^\pm(x_i)\right)\cup A^+(x_{N+1}).$$
  Note that $H^\pm(x_i)$, $i=2,\cdots,N$ consist of all the possible intersections of any two of $A^-(x_1)$, $\Gamma^\pm(x_i)$, $i=2,\cdots,N$ and $A^+(x_{N+1})$ and $\vol{H^\pm(x_i)}=0$, we have
\begin{align}
\vol{\scup{i=1}{N+1}B_\epsilon(x_i)}
&=\vol{A^-(x_1)}+\vol{A^+(x_{N+1})}
\notag\\
&+\sum_{i=2}^N\vol{\Gamma^+(x_i)}
+\sum_{i=2}^N\vol{\Gamma^-(x_i)}.
\label{balltube.eq1}
\end{align}

  Because $B_\epsilon(x_{i-1})\cup B_\epsilon(x_i)\cup B_\epsilon(x_{i+1})\subset B_{\delta\rho}(x_i)$ which is homeomorphically and $\tau(\delta)$-almost isometrically embedded into $\mathbb R^n$, we have that
\begin{align*}
(1+\tau(\delta))\cdot\vol{\Gamma^\pm(x_i)}
&=\vol{\Gamma_\epsilon^{h_i^\pm}(\mathbb R^n)},
\\
(1+\tau(\delta))\cdot\vol{A^+(x_1)}
&=\frac12\vol{B_\epsilon(\mathbb R^n)}
+\vol{\Gamma_\epsilon^{h_1^+}(\mathbb R^n)},
\\
(1+\tau(\delta))\cdot\vol{A^-(x_{N+1})}
&=\frac12\vol{B_\epsilon(\mathbb R^n)}
+\vol{\Gamma_\epsilon^{h_{N+1}^-}(\mathbb R^n)},
\end{align*}
  where $h_i^+=\frac12|x_ix_{i+1}|$, $h_i^-=\frac12|x_ix_{i-1}|$. Note that it's our convention that the same symbol $\tau(\delta)$ may represent different functions of $\delta$, as long as $\tau(\delta)\to0$ as $\delta\to 0$.
Together with (\ref{balltube.eq1}) and the fact that $h_i^+=h_{i+1}^-$, we get
\begin{align}
(1+\tau(\delta))\cdot
&\vol{\scup{i=1}{N+1}B_\epsilon(x_i)}
=\vol{B_\epsilon(\mathbb R^n)}
+2\sum_{i=1}^N\vol{\Gamma_\epsilon^{h_i^+}(\mathbb R^n)}
%\notag\\
%&=\vol{B_\epsilon(\mathbb R^n)}
%+2\sum_{i=1}^N\vol{\Gamma_\epsilon^{h_i^+}(\mathbb R^n)}.
\label{balltube.eq2}
\end{align}
  Let $\theta_i\in[0,\frac\pi2]$ such that $\cos\theta_i=h_i^+/\epsilon=\frac{|x_ix_{i+1}|}{2\epsilon}$.  By Lemma \ref{trapz}, we have
\begin{align*}
\vol{\Gamma_\epsilon^{h_i^+}(\mathbb R^n)}
&=\epsilon\cdot\vol{B_\epsilon(\mathbb R^{n-1})}
\int_{\theta_i}^{\pi/2}\sin^n (t)\,dt.
\end{align*}
Plugging this into (\ref{balltube.eq2}), we get (\ref{balltube.e01}).

  To get (\ref{balltube.e02}), we need to write $\int_{\theta_i}^{\pi/2} \sin^n (t)\,dt$ in terms of $|x_ix_{i+1}|$. Let $g(s)=\int_{\theta}^{\pi/2} \sin^n (t)\,dt$, where $\theta\in[0,\frac\pi2]$ with $\cos\theta=\frac{s}{2\epsilon}$. Noting that $\theta=\pi/2$
if and only if $s=0$, we have $g(0)=0$. Further more,
\begin{align*}
g'(s)
&=-\sin^n\theta\cdot \frac{d\theta}{ds}
=-\sin^n\theta\cdot\frac{1}{-2\epsilon\sin\theta}
=\frac{\sin^{n-1}\theta}{2\epsilon};
\\
g''(s)
    &=\frac{1}{2\epsilon}(n-1)\sin^{n-2}\theta\cos\theta \cdot\frac{1}{-2\epsilon\sin\theta}
=\frac{n-1}{-4\epsilon^2}\sin^{n-3}\theta\cos\theta;
\end{align*}
  and thus $g'(0)=\frac{1}{2\epsilon}$, $g''(0)=0$ and $g'''(0)=\frac{c_n}{\epsilon^3}$. The Taylor expansion of $g$ at $s=0$ is
$$g(s)=\int_{\theta}^{\pi/2} \sin^n (t)\,dt
=0+\frac{s}{2\epsilon}+\frac{1}{\epsilon^3}\cdot O(s^3).$$
Let $s=|x_ix_{i+1}|\le \epsilon^2$, we get
\begin{align*}
\int_{\theta_i}^{\pi/2} \sin^n (t)\,dt
&=\frac{1}{2\epsilon}|x_ix_{i+1}|+O(\epsilon)|x_ix_{i+1}|.
\end{align*}
Plugging this into (\ref{balltube.e01}), we get (\ref{balltube.e02}).
\end{proof}

In the rest of this section we assume that $f:Z\to X$ is a distance non-increasing onto map such that $f^{-1}(X^\delta)\subset Z^{\tau(\delta)}$. By Lemma 1.3, $f$ is
homeomorphic on $f^{-1}(X^\delta)$.

\begin{lem}\label{delta.lip} Let the assumptions be as in Theorem A. Let $x,y\in X^\delta$.
For $\delta>0$ sufficiently small, there exists a small constant
$c=c(\rho,\delta)>0$ such that if $|xy|\le c$, then $|f^{-1}(x) f^{-1}(y)|\le2|xy|$.
\end{lem}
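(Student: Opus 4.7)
The plan is to argue by contradiction: assuming $|f^{-1}(x)f^{-1}(y)|>2|xy|$, I will compare $\vol{B_\epsilon(z_1)\cup B_\epsilon(z_2)}$ with $\vol{B_\epsilon(x)\cup B_\epsilon(y)}$ for $\epsilon=|xy|$, where $z_i=f^{-1}(x_i)$ and $x_1=x,\,x_2=y$, and derive a contradiction from the $N=1$ case of the tube volume formula in Lemma \ref{balltube}.

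For the setup, since $x,y\in X^\delta(\rho)$ and $z_1,z_2\in f^{-1}(X^\delta)\subseteq Z^{\tau(\delta)}$, I may choose a common strainer radius $\rho'>0$ for $z_1,z_2$ and set $c=c(\rho,\delta):=\tau(\delta)\min(\rho,\rho')$. Then for $|xy|\le c$, Lemma \ref{bgp9.4} applies to each of $B_\epsilon(x),\,B_\epsilon(y),\,B_\epsilon(z_1),\,B_\epsilon(z_2)$, so each of these four volumes lies in $(1\pm\tau(\delta))\vol{B_\epsilon(\mathbb R^n)}$. The key inequality $\vol{B_\epsilon(z_1)\cup B_\epsilon(z_2)}\le\vol{B_\epsilon(x)\cup B_\epsilon(y)}$ is then immediate: $f$ being distance non-increasing gives $B_\epsilon(z_i)\subseteq f^{-1}(B_\epsilon(x_i))$, and Lemma \ref{vol.preserve} yields $\vol{f^{-1}(B)}=\vol{B}$ for every $B\subseteq X$.

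Under the contradiction hypothesis $|z_1z_2|>2\epsilon$, the balls $B_\epsilon(z_i)$ are disjoint, so the left-hand side is at least $2(1-\tau(\delta))\vol{B_\epsilon(\mathbb R^n)}$. Meanwhile Lemma \ref{balltube} with $N=1$ and $\cos\theta_1=|xy|/(2\epsilon)=\tfrac12$ (so $\theta_1=\pi/3$) yields
\[
(1+\tau(\delta))\vol{B_\epsilon(x)\cup B_\epsilon(y)}=\vol{B_\epsilon(\mathbb R^n)}+2\epsilon\vol{B_\epsilon(\mathbb R^{n-1})}\int_{\pi/3}^{\pi/2}\sin^n t\,dt.
\]
Combining with the key inequality produces
\[
\vol{B_\epsilon(\mathbb R^n)}\le 2\epsilon\vol{B_\epsilon(\mathbb R^{n-1})}\int_{\pi/3}^{\pi/2}\sin^n t\,dt+O(\tau(\delta))\vol{B_\epsilon(\mathbb R^n)},
\]
but the classical identity $2c_{n-1}\int_0^{\pi/2}\sin^n t\,dt=c_n$ (with $c_k=\vol{B_1(\mathbb R^k)}$) together with $\int_0^{\pi/3}\sin^n t\,dt>0$ shows the main term on the right is strictly less than $\vol{B_\epsilon(\mathbb R^n)}$ by a definite $n$-dependent amount. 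This contradicts the preceding inequality for $\delta$ sufficiently small.

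The only delicate point is the bookkeeping of the $\tau(\delta)$ errors: the ``definite gap'' $c_n-2c_{n-1}\int_{\pi/3}^{\pi/2}\sin^n t\,dt$ depends only on $n$, so it dominates $\tau(\delta)$ once $\delta$ is chosen sufficiently small, uniformly in $\rho$ and $\rho'$. A finer comparison in the complementary case $|z_1z_2|<2\epsilon$, relating $\int_{\theta_z}^{\pi/2}\sin^n t\,dt$ to $\int_{\pi/3}^{\pi/2}\sin^n t\,dt$, would actually sharpen the conclusion to $|z_1z_2|\le(1+\tau(\delta))|xy|$, but the stated bound $2|xy|$ is enough for the subsequent length-of-geodesic estimates in Theorem A.
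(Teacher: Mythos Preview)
Your proof is correct and follows essentially the same approach as the paper: both argue by contradiction with $\epsilon=|xy|$, use the disjointness of $B_\epsilon(z_1),B_\epsilon(z_2)$ on the $Z$-side, apply the $N=1$ case of Lemma~\ref{balltube} with $\theta_1=\pi/3$ on the $X$-side, and close with the identity $\vol{B_\epsilon(\mathbb R^n)}=2\epsilon\vol{B_\epsilon(\mathbb R^{n-1})}\int_0^{\pi/2}\sin^n t\,dt$ (the paper cites this as Lemma~\ref{trapz} at $\theta=0$). The only cosmetic difference is that the paper phrases the contradiction as a volume ratio while you phrase it as a direct inequality, and you make explicit the strainer radius $\rho'$ for the preimages that the paper leaves implicit.
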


\begin{proof}
Assume that $|xy|=\epsilon\ll\delta\rho$ and $|f^{-1}(x)f^{-1}(y)|>2\epsilon$,
consider the metric balls $B_\epsilon(x)$ and $B_\epsilon(y)$. By
Lemma \ref{balltube},
\begin{align*}
&(1+\tau(\delta))\cdot\vol{B_\epsilon(x)\cup B_\epsilon(y)}
\\
&=\vol{B_\epsilon(\mathbb R^n)}
+2 \epsilon\cdot\vol{B_\epsilon(\mathbb R^{n-1})}
\int_{\pi/3}^{\pi/2}
\sin^n (t)\,dt+O(\epsilon^{n+1}).
\end{align*}
Since $B_\epsilon(f^{-1}(x))\cap B_\epsilon(f^{-1}(y))=\varnothing$, we have
\begin{align*}
(1+\tau(\delta))\cdot\vol{B_\epsilon(f^{-1}(x))\cup B_\epsilon(f^{-1}(y))}
=2\vol{B_\epsilon(\mathbb R^n)}.
\end{align*}
Because $f$ is distance non-increasing, $B_\epsilon
(f^{-1}(x))\cup B_\epsilon(f^{-1}(y))\subset f^{-1}(B_\epsilon(x)
\cup B_\epsilon(y))$. Together with that $f^{-1}$ is volume preserving, we get
\begin{align*}
1&=\frac{\vol{f^{-1}(B_\epsilon(x)\cup B_\epsilon(y))}}
{\vol{B_\epsilon(x)\cup B_\epsilon(y)}}
\\
&\geq \frac{(1-\tau(\delta))\cdot 2\vol{B_\epsilon(\mathbb R^n)}}
{\text{vol\,}(B_\epsilon(\mathbb R^n))
+2 \epsilon\cdot\vol{B_\epsilon(\mathbb R^{n-1})}
\int_{\pi/3}^{\pi/2}\sin^n (t)\,dt+O(\epsilon^{n+1})}
\\
&=\frac{(1-\tau(\delta))\cdot 2\int_{0}^{\pi/2}\sin^n (t)\,dt}
{\int_{0}^{\pi/2}\sin^n (t)\,dt
+\int_{\pi/3}^{\pi/2}\sin^n (t)\,dt+O(\epsilon)}.
\quad \text{(see Lemma \ref{trapz}, $\theta=0$)}
\end{align*}
This leads to a contradiction for sufficiently small $\epsilon$ and
$\delta$.
\end{proof}

In the proof of Theorem A, we will need the following result.

\begin{lem} [{[BGP]} 10.6.1] \label{bgp10.6.1} Let $X\in\Alexnk$.
For a fixed sufficiently small $\delta>0$, the union of interior points which do not admit any $(n,\delta)$-strainer has Hausdorff dimension $\le n-2$. In particular, $X^\delta$ is dense.
\end{lem}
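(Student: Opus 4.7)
The plan is to induct on the dimension $n$ and to stratify the bad set according to the largest partial strainer each point admits. For $n=1$ the statement is vacuous, since any interior point of a $1$-dimensional Alexandrov space admits a $(1,\delta)$-strainer for every $\delta>0$. For the inductive step, let $S^\delta\subseteq X$ denote the set of interior points that do not admit any $(n,\delta)$-strainer, and decompose $S^\delta=\bigsqcup_{k=0}^{n-1}S_k^\delta$ where $p\in S_k^\delta$ iff $p$ admits a $(k,\delta)$-strainer but no $(k+1,\delta)$-strainer. It suffices to prove $\dim_H S_k^\delta\le n-2$ for each $k$, and then cover $S^\delta$ by the strata.

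Fix $p\in S_k^\delta$ with strainer $\{(a_i,b_i)\}_{i=1}^k$ of radius $\rho$. By Lemma~\ref{bgp9.4} applied to this partial strainer, the distance map $\psi=(|a_1\cdot|,\ldots,|a_k\cdot|):B_{\delta\rho}(p)\to\mathbb{R}^k$ is a $\tau(\delta)$-almost-submersion whose fibers are $\tau(\delta)$-almost-isometric to $(n-k)$-dimensional Alexandrov spaces with an effective lower curvature bound depending on $\kappa$ and $\rho$. The defining condition for $S_k^\delta$ translates, via Toponogov comparison on $S_\kappa^2$, into the statement that the corresponding fiber-point does not admit a $(1,\delta')$-strainer with $\delta'=\delta'(\delta,\kappa)$ in its fiber. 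By the inductive hypothesis applied to each $(n-k)$-dimensional fiber, the bad set within a fiber has Hausdorff dimension at most $(n-k)-2$. A Fubini-type covering estimate over the $k$-dimensional parameter base then gives $\dim_H(S_k^\delta\cap B_{\delta\rho}(p))\le k+(n-k-2)=n-2$, and a countable covering yields the global bound. The density of $X^\delta$ follows at once from $\dim_H S^\delta\le n-2<n=\dim_H X$.

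The main obstacle is making the translation between $X$ and its fibers faithful: one must show that a $(k{+}1)$st strainer pair $(a_{k+1},b_{k+1})$ at $p\in X$ exists if and only if the fiber through $p$ carries a $(1,\delta')$-strainer, with $\delta$ and $\delta'$ comparable up to factors depending only on $\kappa$ and the strainer radius. This requires choosing the candidate extra pair essentially tangential to the fiber (orthogonal to the existing strainer directions via the almost-product structure) and carefully propagating the $\tau(\delta)$ errors from the almost-submersion through the comparison-angle estimates. A related technical point is that the fibers have their own boundaries, so one must verify that interior points of $X$ correspond to interior points of the fibers before invoking the inductive hypothesis; strata sitting over fiber-boundaries contribute only to $\partial X$ and may be re-stratified by boundary depth, which still yields the codimension-$2$ bound in the interior.
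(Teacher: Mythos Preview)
The paper does not prove this lemma; it simply quotes [BGP]~10.6.1, so the relevant comparison is to the argument in [BGP].

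Your inductive fibration scheme has two genuine gaps. First, Lemma~\ref{bgp9.4} as stated here concerns only full $(n,\delta)$-strainers, and even the more general statement in [BGP] does not say that the fibers of a partial strainer map are themselves Alexandrov spaces with an effective lower curvature bound. Obtaining an Alexandrov structure on such fibers is considerably deeper than the lemma you are trying to prove (it lies in the direction of Perelman's local structure theory), so you cannot invoke the inductive hypothesis on them without substantial additional work. Second, the ``Fubini-type covering estimate'' you appeal to is false in general: a Lipschitz map $f:A\to\mathbb{R}^k$ whose every fiber has Hausdorff dimension at most $d$ need \emph{not} satisfy $\dim_H A\le k+d$; graphs of sufficiently rough functions already give counterexamples. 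To make such an estimate valid you would need a bi-Lipschitz product structure near each point of $S_k^\delta$, not merely an almost-submersion with low-dimensional fibers, and you have not established that.

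The argument in [BGP] bypasses both issues by showing directly that the strainer map $\psi=(|a_1\cdot|,\dots,|a_k\cdot|)$ is locally bi-Lipschitz \emph{when restricted to $S_k^\delta$}: if $p,q\in S_k^\delta$ are close and $|\psi(p)-\psi(q)|\ll|pq|$, then $\uparrow_p^q$ is nearly orthogonal to every strainer direction, and one can produce (using that $\dim X>k$) a nearly opposite direction so that together they furnish a $(k{+}1)$-st strainer pair at $p$, contradicting $p\in S_k^\delta$. This immediately yields $\dim_H S_k^\delta\le k$. The restriction to interior points then excludes the stratum $k=n-1$ (which for small $\delta$ lies in $\partial X$), giving the bound $n-2$. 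No induction on $n$, no fiber geometry, and no Fubini are required. Your stratification $S^\delta=\bigcup_k S_k^\delta$ is the right starting point; the missing idea is that each stratum embeds bi-Lipschitz into $\mathbb{R}^k$, rather than fibering over it.
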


\begin{proof}[{\bf Proof of Theorem A}] Since $f$ is distance non-increasing,
it suffices to show that $f$ is distance non-decreasing, i.e. for any $\tilde a, \tilde b\in Z$, $|ab|\ge|\tilde a\tilde b|$, where $a=f(\tilde a)$ and $b=f(\tilde b)$.

For any small $\epsilon_1$, by Lemma \ref{bgp10.6.1}, there are $\tilde a_{\epsilon_1}, \tilde b_{\epsilon_1}
\in Z^{\tau(\delta)}$, $a_{\epsilon_1}=f(\tilde a_{\epsilon_1})$, $b_{\epsilon_1}=
f(\tilde b_{\epsilon_1})\in X^\delta$, such that $|aa_{\epsilon_1}|\le
|\tilde a\tilde a_{\epsilon_1}|<{\epsilon_1}$,
$|bb_{\epsilon_1}|\le|\tilde b\tilde b_{\epsilon_1}|<{\epsilon_1}$.

\vskip2mm

\noindent Case 1. Assume that there exists a minimal geodesic $\geod{a_{\epsilon_1}b_{\epsilon_1}}\subset X$. Then $[a_{\epsilon_1}b_{\epsilon_1}]\subset X^{2\delta}$, because the spaces of directions are isometric along
the interior of a geodesic ([Petrunin 98]). By Lemma \ref{f.homeo} (which will be frequently used
without mentioning), $f^{-1}([a_{\epsilon_1}b_{\epsilon_1}])$ is also a continuous curve. Because $\geod{a_{\epsilon_1} b_{\epsilon_1}}$ is compact, we may let $\rho>0$ such that
$\geod{a_{\epsilon_1} b_{\epsilon_1}}\subset
X^{2\delta}(\rho)$ and $f^{-1}(\geod{a_{\epsilon_1} b_{\epsilon_1}})
\subset Z^{\tau(\delta)}(\rho)$. Let $\{x_i\}_{i=1}^{N+1}$ be an
$\epsilon$-partition of $\geod{a_{\epsilon_1} b_{\epsilon_1}}$,
where $x_1=a_{\epsilon_1}$, $x_{N+1}=b_{\epsilon_1}$. For $\epsilon\ll\delta\rho$.
Because $[a_{\epsilon_1}b_{\epsilon_1}]$ is a geodesic,  Lemma \ref{balltube} can be
applied on the partition $\{x_i\}_{i=1}^{N+1}$. Thus we get
\begin{align*}
&(1+\tau(\delta))\cdot
\vol{\bigcup_{i=1}^{N+1}B_\epsilon(x_i)}
\\
&=\vol{B_\epsilon(\mathbb R^n)}
+\vol{B_\epsilon(\mathbb R^{n-1})}\sum_{i=1}^{N}|x_ix_{i+1}|
+O(\epsilon^{n+1})\sum_{i=1}^{N}|x_ix_{i+1}|
\\
&=\vol{B_\epsilon(\mathbb R^{n-1})}
\cdot|a_{\epsilon_1} b_{\epsilon_1}|+O(\epsilon^n).
\end{align*}
Let $z_i=f^{-1}(x_i)$. By lemma \ref{delta.lip}, $|z_iz_{i+1}|\le 2|x_ix_{i+1}|=2\epsilon$.
  Together with that $f$ is distance non-increasing, one can easily check that $\bigcup_{i=1}^{N+1}
B_\epsilon(z_i)$ satisfies the condition of Lemma \ref{balltube}. Then we have
\begin{align*}
(1+\tau(\delta))\cdot
\vol{\bigcup_{i=1}^{N+1}B_\epsilon(z_i)}
=\vol{B_\epsilon(\mathbb R^{n-1})}
\sum_{i=1}^{N}|z_iz_{i+1}|+O(\epsilon^n).
\end{align*}
Because $f$ is distance non-increasing and volume preserving,
\begin{align*}
1&=\frac{\vol{f^{-1}(\bigcup_{i=1}^{N+1}B_\epsilon(x_i))}}
{\vol{\bigcup_{i=1}^{N+1}B_\epsilon(x_i)}}
\geq \frac{\vol{\bigcup_{i=1}^{N+1}B_\epsilon(z_i)}}
{\vol{\bigcup_{i=1}^{N+1}B_\epsilon(x_i)}}
\\
&=
(1-\tau(\delta))\cdot
\frac{\vol{B_\epsilon(\mathbb R^{n-1})}\sum_{i=1}^{N}|z_iz_{i+1}|+O(\epsilon^n)}
  {\vol{B_\epsilon(\mathbb R^{n-1})}\cdot|a_{\epsilon_1} b_{\epsilon_1}|+O(\epsilon^n)},
\\
&=
(1-\tau(\delta))\cdot
\frac{\sum_{i=1}^{N}|z_iz_{i+1}|+O(\epsilon)}
{|a_{\epsilon_1} b_{\epsilon_1}|+O(\epsilon)}
\\
&\ge (1-\tau(\delta))\cdot
\frac{|\tilde a_{\epsilon_1}\tilde b_{\epsilon_1}|+O(\epsilon)}
{|a_{\epsilon_1} b_{\epsilon_1}|+O(\epsilon)}.
\end{align*}
Let $\epsilon\rightarrow 0$, we get
$$|a_{\epsilon_1} b_{\epsilon_1}|\ge (1-\tau(\delta))|\tilde a_{\epsilon_1}\tilde b_{\epsilon_1}|.$$

%\vskip2mm

\noindent Case 2. Assume that there is no minimal geodesic in $X^\delta$ from $a_{\epsilon_1}$
to $b_{\epsilon_1}$ (since $X$ may not be complete). Because spaces of directions along the interior of geodesic are isometric to each other ([Pet1]), we may assume a curve
$c_1$ in $X^\delta$ from $a_{\epsilon_1}$ to $b_{\epsilon_1}$ such that
$L(c_1)<|a_{\epsilon_1}b_{\epsilon_1}|+\epsilon_1$. Since $c_1(t)$ is
a compact subset in the open set $X^\delta$, we may assume $\eta>0$ such that an $\eta$-tube
of $c_1$ is also contained in $X^\delta$. Consequently, we may assume a piecewise geodesic
$c$ in $X^\delta$ such that $L(c)\le L(c_1)\le |a_{\epsilon_1}b_{\epsilon_1}|+\epsilon_1$. Applying Case 1 to each geodesic segment of $c$, we conclude that
$$|a_{\epsilon_1}b_{\epsilon_1}|\ge L(c)-\epsilon_1\ge (1-\tau(\delta))
|\tilde a_{\epsilon_1}\tilde b_{\epsilon_1}|-\epsilon_1.$$

In either Case 1 or Case 2, we have
\begin{align*}
|ab|&\ge|a_{\epsilon_1} b_{\epsilon_1}|-2\epsilon_1
\ge (1-\tau(\delta))|\tilde a_{\epsilon_1}\tilde b_{\epsilon_1}|-3\epsilon_1
\\&\ge (1-\tau(\delta))\cdot(|\tilde a\tilde b|-2\epsilon_1)-3\epsilon_1.
\end{align*}
Let $\delta\to 0$, $\epsilon_1\to 0$, we get $|ab|\ge|\tilde a\tilde b|$.
\end{proof}

\vskip4mm

\section{Relatively maximum volume}

\vskip4mm
Our proof of the classification part in Theorem B is divided into the following two theorems:
open ball rigidity (Theorem \ref{open.iso}) and isometric involution (Theorem \ref{iso.invol}). Recall that $\tilde o$ denotes the vertex of the cone $\bar C_\kappa^R(\Sigma_p)$ and thus $g\exp_p(\tilde o)=p$.

\begin{thm}\label{open.iso} Under the assumptions of Theorem B, $g\exp_p: C^R_\kappa
(\Sigma)\to B_R(p)$ is an isometry with respect to the intrinsic metrics. In particular, $g\exp_p=\exp_p$.
\end{thm}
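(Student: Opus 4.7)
The plan is to apply Theorem A to $g\exp_p$ restricted to the open balls, and then to deduce $g\exp_p=\exp_p$ from the resulting isometry.

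First, I would establish the volume equality on the open domains. Since $\Sigma\times\{R\}=\partial \bar C^R_\kappa(\Sigma)$ and the distance sphere $S_R(p)=\partial B_R(p)$ both have Hausdorff dimension at most $n-1$, they carry zero $n$-volume; hence $X=\bar B_R(p)$ together with $\vol{X}=v(\Sigma,\kappa,R)=\vol{\bar C^R_\kappa(\Sigma)}$ gives $\vol{B_R(p)}=\vol{C^R_\kappa(\Sigma)}$. The map $g\exp_p:C^R_\kappa(\Sigma)\to B_R(p)$ is distance non-increasing and onto -- onto because every $q\in B_R(p)$ is the endpoint of a minimizing geodesic from $p$, along which $g\exp_p$ coincides with $\exp_p$ -- and both $C^R_\kappa(\Sigma)$ and $B_R(p)$ are (possibly incomplete) Alexandrov $n$-spaces of curvature $\geq\kappa$.

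Second, I would verify the strainer hypothesis of Theorem A, namely $g\exp_p^{-1}(B_R(p)^\delta)\subseteq C^R_\kappa(\Sigma)^{\tau(\delta)}$; this is the content of Lemmas \ref{exp.delta} and \ref{exp}. Given $\tilde x\in g\exp_p^{-1}(B_R(p)^\delta)$ with $x=g\exp_p(\tilde x)$, fix an $(n,\delta)$-strainer $\{(a_i,b_i)\}$ at $x$. By Lemma \ref{bgp9.4} the ball $B_\epsilon(x)$ is $\tau(\delta)$-almost isometric to a Euclidean domain, and by Lemma \ref{vol.preserve} the map $g\exp_p$ preserves the volumes of all subsets. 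Combining these, one pulls the strainer back to $\tilde x$ along distance-minimizing geodesics (or gradient curves) in $C^R_\kappa(\Sigma)$ and checks that the requisite near-opposite and near-right-angle conditions survive with a $\tau(\delta)$-loss.

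Granted the strainer hypothesis, Theorem A yields that $g\exp_p:C^R_\kappa(\Sigma)\to B_R(p)$ is an isometry with respect to the intrinsic metrics. To conclude $g\exp_p=\exp_p$, each radial segment $\{(v,t):0\leq t<R\}$ in the cone is a minimizing geodesic of length $R$ from $\tilde o$ in direction $v\in\Sigma=\Sigma_p$. Its image under the isometry $g\exp_p$ is then a minimizing geodesic of length $R$ from $p$ starting in direction $v$, which by definition is $t\mapsto\exp_p(tv)$. Hence $g\exp_p=\exp_p$ on $C^R_\kappa(\Sigma)$.

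The main obstacle is the strainer pullback in Step 2. The space of directions at $\tilde x\in C^R_\kappa(\Sigma)$ is a spherical suspension over $\Sigma_v$, where $v$ is the base direction of $\tilde x$, so the $(n,\tau(\delta))$-strainer condition at $\tilde x$ mixes the radial coordinate with $\Sigma$-direction information. Transferring the $(n,\delta)$-strainer from $x$ to $\tilde x$ cleanly relies on the almost-Euclidean chart of Lemma \ref{bgp9.4} in combination with the local volume preservation of Lemma \ref{vol.preserve}, rather than on any purely formal distance-non-increasing argument.
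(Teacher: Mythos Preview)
Your overall plan---apply Theorem A to $g\exp_p$ on the open ball---matches the paper, but the logical order in Step 2 is inverted, and this matters.

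In the paper, Lemma \ref{exp} (that $g\exp_p=\exp_p$ under the volume hypothesis) is proved \emph{first} and \emph{independently} of Theorem A: one observes that $\exp_p^{-1}:\bar B_R(p)\to\bar C^R_\kappa(\Sigma)$ is distance non-decreasing, so the volume equality forces $\exp_p^{-1}(X)$ to be dense in the cone; $\exp_p$ is then extended by continuity to a distance non-increasing map on the full cone that preserves radial distance, and this is identified with $g\exp_p$. Only \emph{after} this identification does the paper invoke Lemma \ref{exp.delta}, which is stated and proved for $\exp_p^{-1}$, not for $g\exp_p^{-1}$: its proof slides an $(n,\delta)$-strainer along the geodesic $[pq]$ to a point $q'$ near $p$ (using that spaces of directions are isometric along geodesic interiors) and then uses the thin-triangle angle estimate of Lemma \ref{bgp75} to produce an $(n-1,\tau(\delta))$-strainer at $\drn_p^q\in\Sigma_p$. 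Neither Lemma \ref{vol.preserve} nor the almost-Euclidean chart of Lemma \ref{bgp9.4} is used in that argument.

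Your sketch proposes instead to pull the strainer from $x$ back to $\tilde x$ directly via Lemma \ref{vol.preserve} and Lemma \ref{bgp9.4}. This is the gap. Because $g\exp_p$ is distance non-increasing, $B_\epsilon(\tilde x)\subseteq g\exp_p^{-1}(B_\epsilon(x))$; volume preservation then yields only $\vol{B_\epsilon(\tilde x)}\le\vol{B_\epsilon(x)}\approx\vol{B_\epsilon(\Bbb R^n)}$, which is the trivial direction and gives no lower bound at $\tilde x$. Even a near-Euclidean volume lower bound would not by itself manufacture an $(n,\tau(\delta))$-strainer. Without first knowing $g\exp_p=\exp_p$, you also cannot invoke Lemma \ref{exp.delta} as written, since its argument is specific to the inverse exponential map. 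Once you reorder as the paper does---prove $g\exp_p=\exp_p$ first via Lemma \ref{exp}---the strainer condition follows from Lemma \ref{exp.delta}, Theorem A applies, and your separate deduction of $g\exp_p=\exp_p$ in your final step becomes redundant.
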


By Theorem \ref{open.iso}, $X=\bar C^R_\kappa(\Sigma_p)
/x\sim x'$, where the equivalent relation $x\sim x'$ if and only if $\exp_px=\exp_px'$ and $x,x'\in\Sigma_p\times\{R\}$.

\begin{thm} \label{iso.invol} Let $X=\bar C^R_\kappa(\Sigma_p)
/x\sim x'\in \Alexnk$ defined as the above, then each equivalent class contains at most 2 points. Moreover, the induced involution $\phi: \Sigma_p\times\{R\}
\to \Sigma_p\times\{R\}$, $\phi(x)= x'$ (where $x\sim x'$) is an isometry.
\end{thm}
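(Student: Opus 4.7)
The plan has two stages: first, show each equivalence class in $\Sigma_p \times \{R\}$ contains at most two points, yielding a well-defined involution $\phi$; second, upgrade $\phi$ to an isometry via a local bi-Lipschitz estimate.

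For the first stage, suppose $\tilde x_1, \tilde x_2, \tilde x_3 \in \Sigma_p \times \{R\}$ are pairwise distinct yet $\exp_p \tilde x_i = q$ for $i=1,2,3$. By Theorem \ref{open.iso}, the three radial cone segments $\geod{\tilde o \tilde x_i}$ map isometrically onto geodesics of length $R$ from $p$ to $q$ in $X$. For any pair $i\ne j$, the terminal directions of $\exp_p \geod{\tilde o \tilde x_i}$ and $\exp_p \geod{\tilde o \tilde x_j}$ at $q$ must be antipodal in $\Sigma_q$; otherwise the concatenation would fail to be a local geodesic and nearby points could be joined by $X$-paths strictly shorter than their cone preimages, contradicting the volume preservation of $\exp_p$ (Lemma \ref{vol.preserve} applied to $g\exp_p$). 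But no $\Sigma_q$ can contain three pairwise antipodal directions. Hence each class has at most two elements and $\phi$ is well-defined.

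For the second stage, since $\phi$ is an involution, it suffices to show $\phi$ is distance non-increasing. This follows from the local bi-Lipschitz bound
\[
\left|\,\frac{|\phi(\tilde x)\phi(\tilde y)|}{|\tilde x \tilde y|} - 1\,\right| \le 20\,|\tilde x\tilde y|
\]
valid for $|\tilde x \tilde y|$ small (Lemma \ref{non-cross}): given this estimate, $\phi$ is continuous, and for any Lipschitz curve $\gamma \subset \Sigma_p \times \{R\}$ the sum of $|\phi(\gamma(t_i))\phi(\gamma(t_{i+1}))|$ over a fine partition converges to $L(\gamma)$, so $L(\phi\circ\gamma) = L(\gamma)$. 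As $\Sigma_p \times \{R\}$ is a length space, minimizing over $\gamma$ yields $|\phi(\tilde x)\phi(\tilde y)| \le |\tilde x\tilde y|$.

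The main obstacle is Lemma \ref{non-cross} itself. Given $\tilde x,\tilde y$ nearby on $\Sigma_p \times \{R\}$, write $a = \exp_p \tilde x$, $b = \exp_p \tilde y$, and fix a minimizing geodesic $c \subset X$ from $a$ to $b$. The problem is that $c$ may meet $L_p(X) = \exp_p(\Sigma\times\{R\})$ at many interior crossing points, making it ambiguous which local component $U_1$ or $U_2$ of the complement of $L_p(X)$ to work in. The plan is to use Corollary \ref{stay.bdy} to show the crossing points are discrete, reducing to the case where $c$ meets $L_p(X)$ only at its endpoints and lies in one local component, say $c \subset U_1$. One then builds a companion piecewise geodesic $c_2 \subset U_2$ from $a$ to $b$ by reflecting $c$ through $L_p(X)$ via $\phi$ on short boundary replacements, lifts $c$ and $c_2$ to paths near $\geod{\tilde x\tilde y}$ and $\geod{\phi(\tilde x)\phi(\tilde y)}$ respectively in $\bar C^R_\kappa(\Sigma_p)$, and applies Toponogov comparison together with the $\kappa$-cosine law. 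A second-order Taylor expansion then produces $|L(c_2) - L(c)| = O(|\tilde x\tilde y|^2)$, which gives the required Lipschitz bound. This second-order cancellation uses essentially both the curvature lower bound and the $\kappa$-cone structure, and is the heart of the proof.
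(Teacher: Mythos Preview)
Your overall architecture matches the paper's: Lemma \ref{invol} for the two-point bound, then Lemma \ref{non-cross} to upgrade the involution to an isometry via length preservation. The derivation of the isometry from Lemma \ref{non-cross} in your second stage is exactly what the paper does.

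Two points need attention.

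\textbf{Stage 1 justification.} Your appeal to ``volume preservation (Lemma \ref{vol.preserve})'' is not the right mechanism. The correct argument, which the paper gives in Lemma \ref{invol}, uses the open-ball isometry of Theorem \ref{open.iso} directly: for $x_1,x_2$ on the two radial geodesics near $q$, any minimal $\geod{x_1x_2}$ must hit $L_p(X)$, since otherwise $\geod{x_1x_2}\subset B_R(p)$ and $|x_1x_2|_X=|\tilde x_1\tilde x_2|$ would stay bounded away from $0$ as $x_i\to q$. One then checks the hitting point must be $q$, so the concatenation is a local geodesic. After that, the paper invokes non-bifurcation of geodesics rather than ``three pairwise antipodal directions''; the latter is morally equivalent but you would still need to argue why three directions in $\Sigma_q$ cannot be pairwise at distance $\pi$.

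\textbf{The sketch of Lemma \ref{non-cross}.} This is where your proposal is a genuine outline rather than a proof, and the suggested mechanism does not match the paper's. You propose to build $c_2\subset U_2$ by ``reflecting $c$ through $L_p(X)$ via $\phi$.'' This is circular: $\phi$'s metric behavior is precisely what Lemma \ref{non-cross} is meant to establish, so you cannot use $\phi$ to build the comparison curve. The paper avoids this entirely. Its construction is purely in $X$: from $u_0\in\geod{px^+}$ with $|u_0x|=\epsilon$, draw the geodesic $\geod{u_0y}$, record its first two crossings $a_1,b_1$ with $L_p(X)$, then repeat from $u_1\in\geod{pa_1^+}$, and so on. The key estimate is that the total number of iterations $N$ satisfies $(N+1)\epsilon<5R\beta_0^2$ (obtained by bounding $|u_ia_{i+1}^+|$ from below via the cosine law in $\triangle pu_ia_{i+1}^+$), which lets one pass to the limit $\epsilon\to 0$ and obtain the angle inequality $\measuredangle x^+py^+\le \beta_0+10\beta_0^2$. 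Your sketch does not contain this inductive crossing-control argument, and the ``second-order Taylor expansion'' you allude to does not by itself handle the possibility of many crossings.

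Finally, you do not mention the separate treatment the paper gives for $\kappa>0$ with $R=\frac{\pi}{2\sqrt\kappa}$ (Corollary \ref{spec.iso}) and $R>\frac{\pi}{2\sqrt\kappa}$ (Corollary \ref{pso.non.opt}); Lemma \ref{non-cross} as stated assumes $R<\frac{\pi}{2\sqrt\kappa}$.
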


Recall that the induced gradient-exponential map $g\exp_p: \bar C^R_\kappa(\Sigma)
\to \bar B_R(p)=X$ is distance non-increasing and onto. Indeed, the open ball rigidity is
essentially a consequence of Theorem A and a general property of $\exp_p^{-1}: X\to T_pX$:
$\exp_p^{-1}$ preserves $(n,\delta)$-strained points up to a
constant depending on $\delta$ (see Lemma \ref{exp.delta}). In the proof, let's recall the following
property from [BGP]:

\begin{lem}[{[BGP]} Lemma 7.5 and 11.2] \label{bgp75} Let $p\in X\in\Alexnk$. Then for any
$\delta>0$, there is a small neighborhood $U_p$ of $p$ such that for any triangle $\triangle pab$
with $a,b\in U_p$ each angle of $\triangle pab\subset X$ differs from the comparison angle of $\tilde
\triangle pab\subset \mathbb S_{\kappa}^2$ by less than $\delta$.
\end{lem}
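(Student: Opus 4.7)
The plan is to control $|\angle - \tilde{\angle}_\kappa|$ at each of the three vertices of $\triangle pab$ separately. One direction, $\angle \ge \tilde{\angle}_\kappa$, is Toponogov's theorem in $X\in\Alexnk$ and needs no smallness hypothesis, so the real content is the upper bound $\angle \le \tilde{\angle}_\kappa + \delta$, uniform over $a,b$ in some $U_p$. For a single fixed pair of geodesics $\alpha,\beta$ issuing from $p$ this upper bound is immediate from the monotonicity lemma: $\tilde{\angle}_\kappa\,\alpha(s)p\beta(t)$ is non-increasing in $s,t$ with monotone limit equal to the Alexandrov angle $\angle(\alpha,\beta)$ as $s,t\to 0^+$. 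What needs work is uniformity in the direction pair; that is why one has to pass to a neighborhood $U_p$.

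To establish the uniform bound I would argue by contradiction via a blow-up at $p$. Suppose the lemma fails: there exist $\delta_0 > 0$ and sequences $a_n, b_n \to p$ with some angle of $\triangle pa_nb_n$ exceeding its $\kappa$-comparison by at least $\delta_0$. Set $r_n = \max\{|pa_n|,|pb_n|\}\to 0$ and rescale: the pointed spaces $(X, p, r_n^{-1}d) \in \text{Alex}^n(\kappa r_n^2)$ have a uniform lower curvature bound and, by Gromov's precompactness, subconverge in the pointed Gromov--Hausdorff topology (without collapse of dimension) to the tangent cone $(T_pX, \tilde o) = (C_0(\Sigma_p),\tilde o) \in \text{Alex}^n(0)$. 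The rescaled points $a_n, b_n$ subconverge to $\tilde a, \tilde b \in T_pX$ with $\max\{|\tilde o\tilde a|, |\tilde o\tilde b|\} = 1$. Because the $\kappa$-comparison of the rescaled triangle is taken in $\mathbb S^2_{\kappa r_n^2}$, whose curvature tends to $0$, the rescaled $\tilde{\angle}_\kappa$ converges to the Euclidean comparison $\tilde{\angle}_0$ of the limit triangle $\triangle \tilde o\tilde a\tilde b$; the actual angles of $\triangle pa_nb_n$ (in the rescaled metric) converge to the actual angles of the limit triangle by stability of angles under non-collapsing pointed GH convergence of Alexandrov spaces with a uniform lower curvature bound.

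It remains to verify that in the limit every angle of $\triangle \tilde o\tilde a\tilde b \subset T_pX$ coincides with its Euclidean comparison. Joining $\uparrow_{\tilde o}^{\tilde a}$ and $\uparrow_{\tilde o}^{\tilde b}$ by a minimizing geodesic in $\Sigma_p$ (which exists since $\Sigma_p \in \text{Alex}^{n-1}(1)$ is a geodesic space of diameter $\le \pi$) and coning this geodesic off at $\tilde o$ produces a flat Euclidean $2$-dimensional sector. This sector contains $[\tilde o\tilde a]$ and $[\tilde o\tilde b]$, and the geodesic $[\tilde a\tilde b]$ in the cone coincides with the Euclidean straight segment in the sector, so the entire triangle sits inside the flat sector, in which all three angles equal their Euclidean comparisons. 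This contradicts the $\delta_0$-gap. The angles at $a$ and $b$ are handled by the same blow-up recentered at the respective vertex; the degenerate limits where $\tilde a$ or $\tilde b$ collapses onto $\tilde o$ are automatic, because in that case the triangle inequality forces both the actual angle and its $\kappa$-comparison at the corresponding vertex to be within $o(1)$ of $\pi$. The main obstacle in this plan is the angle-stability step of the blow-up: distance convergence in pointed GH is automatic, but stability of angles genuinely uses the uniform lower curvature bound, via the two-sided squeeze between Toponogov from above and the monotone-limit definition from below.
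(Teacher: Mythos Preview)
The paper does not prove this lemma; it merely cites [BGP], Lemmas~7.5 and~11.2, so there is no in-paper argument to compare against. On its own merits, your blow-up strategy is a reasonable alternative to BGP's direct approach (compactness of $\Sigma_p$ combined with the hinge form of Toponogov for the angle at $p$, and strainer-type arguments for the angles at $a$ and $b$). In the non-degenerate regime --- where after rescaling all three vertices converge to a genuine triangle $\triangle\tilde o\tilde a\tilde b$ in $T_pX$ with one vertex at the cone point --- your argument is correct: such a triangle lies in a flat $2$-sector, so all three of its angles coincide with their Euclidean comparisons.

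There is, however, a genuine gap in your treatment of the degenerate case. Your claim that when (say) $\tilde b$ collapses onto $\tilde o$, i.e.\ $|pb_n|/|pa_n|\to 0$, ``the triangle inequality forces both the actual angle and its $\kappa$-comparison at the corresponding vertex to be within $o(1)$ of $\pi$'' is simply false. In $\mathbb R^2$ with $p=0$, $a_n=(1,0)$, $b_n=(0,1/n)$ one has $|pb_n|/|pa_n|\to 0$, yet both the actual and comparison angles at $b_n$ tend to $\pi/2$, not $\pi$. More seriously, in a general $X\in\Alexnk$ the angle at $b_n$ in this regime is not controlled by your rescaling at all: with scale $r_n=\max\{|pa_n|,|pb_n|\}$ the side $[pb_n]$ shrinks to a point, so there is no limit hinge at $\tilde b$; and replacing $a_n$ by a nearby point $a_n'\in[b_na_n]$ with $|b_na_n'|=|pb_n|$ only yields $\tilde\angle_\kappa pb_na_n'\ge\tilde\angle_\kappa pb_na_n$ by monotonicity, which bounds $\angle-\tilde\angle$ from \emph{below}, not above. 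This vertex genuinely needs a separate argument (as in BGP~11.2).

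A secondary point: you rightly flag angle stability under non-collapsing GH convergence as the delicate step. Be aware that in several standard developments the \emph{upper} semicontinuity of angles is itself derived from the lemma you are proving (applied in the limit space), so you must be careful that the version you invoke is logically independent of the statement at hand.
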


\begin{lem}\label{exp.delta} Let $q\in X^\delta$. Then for any $p\in X$, $\drn_p^q\in
\Sigma_p^{\tau(\delta)}$. Consequently, $\exp_p^{-1}(q)\in \bar C_\kappa^R(\Sigma_p)^{\tau(\delta)}$.
\end{lem}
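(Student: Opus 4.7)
I plan to verify the first claim $\drn_p^q\in\Sigma_p^{\tau(\delta)}$ by explicitly constructing an $(n-1,\tau(\delta))$-strainer at $\drn_p^q$ in $\Sigma_p$; the second claim then follows at once, because at any interior point $(r,v)\in\bar C_\kappa^R(\Sigma_p)$ with $r>0$ the space of directions is the spherical suspension of $\Sigma_v\Sigma_p$, so an $(n-1,\tau(\delta))$-strainer at $v=\drn_p^q$ promotes (by adjoining the radial pair) to an $(n,\tau(\delta))$-strainer at $\exp_p^{-1}(q)=(r,v)$ in the cone.

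For the main claim, start from the given $(n,\delta)$-strainer $\{(a_i,b_i)\}_{i=1}^n$ at $q$ of radius $\rho$, and apply Lemma \ref{bgp9.4} to obtain a $\tau(\delta)$-almost isometric embedding $\psi:U\to\mathbb{R}^n$ of a neighborhood $U$ of $q$. In the Euclidean model, pick an orthonormal frame at $\psi(q)$ whose first axis is the image of $\drn_q^p$ and whose remaining $n-1$ axes are $e_1,\dots,e_{n-1}$. For a small scale $\rho'\ll\rho$, choose points $c_i^\pm\in U$ with $\psi(c_i^\pm)\approx\psi(q)\pm\rho'e_i$; the almost-isometry pins the side lengths $|c_i^\pm q|_X\approx\rho'$, $|c_i^+c_i^-|_X\approx 2\rho'$, $|c_i^\pm c_j^\pm|_X\approx\sqrt{2}\,\rho'$ for $i\ne j$, and also the angles at $q$ (measured in $\Sigma_q$, which is $\tau(\delta)$-close to $\mathbb{S}^{n-1}$) at their Euclidean values up to a $\tau(\delta)$-error. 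I propose $\{(\drn_p^{c_i^+},\drn_p^{c_i^-})\}_{i=1}^{n-1}$ as the candidate strainer at $\drn_p^q$ in $\Sigma_p$.

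With $L=|pq|$, the relevant sides of the triangles in $\Sigma_p$ are the angles $\angle c_i^\pm pq$, $\angle c_i^+pc_i^-$ and $\angle c_i^\pm pc_j^\pm$ at $p$ in $X$, all of order $\rho'/L$, so the sphere $\mathbb{S}^2_1$ used in the strainer conditions is essentially Euclidean at this scale; the desired comparison angles ($\approx\pi$ for antipodal pairs and $\approx\pi/2$ for mixed pairs at $\drn_p^q$) reduce to an approximate triangle equality $\angle c_i^+pc_i^-\approx\angle c_i^+pq+\angle qpc_i^-$ and an approximate Pythagorean identity for mixed triples. Toponogov comparison applied to the triangles $\triangle c_i^\pm pq$, $\triangle c_i^+pc_i^-$ and $\triangle c_i^\pm pc_j^\pm$, together with the hinge comparison at the strained $q$---which forces $|pc_i^\pm|\le L+\rho'^2/(2L)+O(\tau(\delta)\rho')$ since the hinge angle $\angle pqc_i^\pm$ is pinned near $\pi/2$---supplies the required lower bounds on each of these angles.

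The technical heart of the argument is in producing matching upper bounds on the angles at $p$. Since the Alexandrov condition $\text{curv}\ge\kappa$ only provides one-sided angle comparison, I plan to exploit the strainedness of both endpoints of each triangle simultaneously: each $c_i^\pm$ inherits $\tau(\delta)$-strainedness from the embedding $\psi$, so the angle $\angle pc_i^\pm q$, computed in $\Sigma_{c_i^\pm}$ which is $\tau(\delta)$-close to $\mathbb{S}^{n-1}$, is also pinned near $\pi/2$ by the Euclidean geometry of $\psi(U)$. With two of the three angles of $\triangle pqc_i^\pm$ pinned and all three sides already pinned to their Euclidean values, the remaining angle $\angle qpc_i^\pm$ is forced---via the law of cosines read on the comparison triangle together with the Toponogov lower bound---to lie within $\tau(\delta)\rho'/L$ of its Euclidean value $\rho'/L$; an analogous pinning argument handles $\angle c_i^+pc_i^-$ and the mixed triangles. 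Substituting these angle estimates into the law of cosines in $\mathbb{S}^2_1$ then verifies all strainer conditions with a $\tau(\delta)$-error. The central mechanism is the propagation of the Euclidean flatness from the pair of strained vertices $q$ and $c_i^\pm$ to the angular structure at the possibly singular remote vertex $p$, exploiting the two-sided control available at strained vertices.
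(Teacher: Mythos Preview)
Your proposal has a genuine gap in what you call the ``technical heart'': the upper bound on the angles $\angle qpc_i^\pm$ at the vertex $p$ is not established. In an Alexandrov space with $\text{curv}\ge\kappa$, Toponogov yields only $\angle qpc_i^\pm\ge\tilde\angle qpc_i^\pm$, and pinning the angles at the two strained vertices $q$ and $c_i^\pm$ does \emph{not} force the third angle, since the angle-sum excess of a triangle is bounded only from below. Concretely, knowing $\angle pqc_i^+\approx\pi/2$ and $\angle pc_i^+q\approx\pi/2$ up to $\tau(\delta)$, together with $\angle qpc_i^+\ge\tilde\angle qpc_i^+$, still allows $\angle qpc_i^+$ to be much larger than $\rho'/L$; your ``propagation of Euclidean flatness from the pair of strained vertices to the remote vertex $p$'' is not a valid mechanism in this one-sided setting. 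Without this upper bound you cannot verify the antipodal strainer condition $\tilde\angle_1(\drn_p^{c_i^+},\drn_p^q,\drn_p^{c_i^-})>\pi-\tau(\delta)$, which (for these small triangles in $\Sigma_p$) amounts to $\angle c_i^+pc_i^-\approx\angle c_i^+pq+\angle qpc_i^-$, and the triangle inequality in $\Sigma_p$ supplies only the $\le$ direction.

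The paper's proof circumvents this difficulty by a different route. It first invokes Petrunin's theorem that spaces of directions are isometric along the interior of a geodesic to transport the $(n,\tau(\delta))$-strainer from (a point near) $q$ to a point $q'\in[pq]\cap U_p$ arbitrarily close to $p$, arranged so that one strainer pair $(a_n',b_n')$ lies along $[pq]$. It then applies Lemma~\ref{bgp75} (BGP 7.5/11.2), which says that for \emph{any} $p$ there is a neighborhood $U_p$ in which every triangle $\triangle pab$ with $a,b\in U_p$ has all three angles within $\delta$ of their comparison angles. This gives the needed two-sided control on the angles at $p$ for the small triangles $\triangle pa_i'x_j'$, from which the strainer estimates in $\Sigma_p$ follow by a direct cosine-law computation. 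The point you are missing is that two-sided angle control at $p$ is available only for \emph{small} triangles near $p$; this is why the transport step via Petrunin's theorem is essential and why working directly with the large triangles $\triangle pqc_i^\pm$ cannot succeed.
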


\begin{proof} Since $q\in X^\delta$, by Lemma \ref{bgp9.4}, we may assume an $(n,2\delta)$-strainer $\{(a_i, b_i)\}$
for $q_1\in\geod{pq}$ and near $q$, such that $b_n=q$, $a_n\in\geod{pq_1}$. Because the spaces of directions are isometric along the interior of a geodesic ([Petrunin 98]), there is $q'\in\geod{pq}\cap U_p$ which has an $(n,\tau(\delta))$-strainer
$\{(a'_i, b'_i)\}$. By the same reason as the above, we can assume that $a_n'\in\geod{pq'}$ and $b_n'\in \geod{q'q}$.

In addition, we can assume that $|q'a'_i|$, $|q'b'_i|$ are short so that $a'_i, b'_i\in U_p$
and $\measuredangle a'_ipq', \measuredangle b'_ipq'<5\delta$. We claim that $\{(\drn_p^{a'_i},
\drn_p^{b'_i})\}_{i=1}^{n-1}$ forms an $(n-1,\tau(\delta))$-strainer at $\drn_p^q\in \Sigma_p$.
It's easy to see that
$\measuredangle a'_ipq' =\tilde\measuredangle a'_ipq'+\tau(\delta)=\frac{|a'_iq'|}{|pq'|}+\tau(\delta)$.
Thus
$$\cos\tilde\measuredangle\drn_p^{a'_i}\drn_p^{q'}\drn_p^{x_j}
=\frac{|a'_iq'|^2+|x_jq'|^2-|a'_ix_j|}{2|a'_iq'||x_jq|} +\tau(\delta)
=\cos\tilde\measuredangle a'_iq'x_j +\tau(\delta),$$
where $i,j=1,2,\cdots,n-1$, $x_j=a'_j$ or $b'_j$.
\end{proof}

To conclude the open ball rigidity by applying Theorem A, we need to check that $g\exp_p^{-1}(X^\delta)\subseteq \bar C_\kappa^R(\Sigma_p)^{\tau(\delta)}$. We obtain this by showing
$g\exp_p=\exp_p$ when $\text{vol}(X)=v(\Sigma_p,\kappa,R)$.

\begin{lem}\label{exp} If
$\vol{B_R(p)}=\vol{C_{\kappa}^R(\Sigma_p)}$, then the gradient exponential map
is actually an exponential map $\exp_p:\bar C_{\kappa}^R(\Sigma_p)
\rightarrow \bar B_R(p)$ which preserves the distance along the radio direction.
\end{lem}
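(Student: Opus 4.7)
The plan is to upgrade the hypothesis of global volume equality to equality at every intermediate radius, and then exploit this to force radial distances to be preserved. First, I would establish that $\vol{\bar B_r(p)} = \vol{\bar C^r_\kappa(\Sigma_p)}$ for every $r \in [0, R]$. Because $g\exp_p$ is distance non-increasing, $g\exp_p(\bar C^r_\kappa(\Sigma_p)) \subseteq \bar B_r(p)$. Conversely, any $q \in \bar B_r(p)$ lies at the end of a minimizing geodesic from $p$ (which exists since $X$ is a compact Alexandrov space) of length $|pq| \le r$, and along such a geodesic the gradient exponential agrees with the exponential map, so $q = g\exp_p(|pq|\cdot\drn_p^q) \in g\exp_p(\bar C^r_\kappa(\Sigma_p))$. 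Hence $g\exp_p(\bar C^r_\kappa(\Sigma_p)) = \bar B_r(p)$. Applying Lemma \ref{vol.preserve} to the volume-preserving map $g\exp_p : \bar C^R_\kappa(\Sigma_p) \to \bar B_R(p)$ with the subset $A = \bar C^r_\kappa(\Sigma_p)$ yields $\vol{\bar C^r_\kappa(\Sigma_p)} = \vol{g\exp_p(\bar C^r_\kappa(\Sigma_p))} = \vol{\bar B_r(p)}$.

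Next, I would derive radial distance preservation by contradiction. Suppose $|p, g\exp_p(t_0\xi)| =: s < t_0$ for some $\xi \in \Sigma_p$ and $t_0 \in (0, R]$, and pick $r$ with $s < r < t_0$. Continuity of $g\exp_p$ supplies an open neighborhood $U$ of $t_0\xi$ in $\bar C^R_\kappa(\Sigma_p)$ with $g\exp_p(U) \subseteq B_r(p)$. Lemma \ref{vol.preserve} applied to $B = \bar B_r(p)$ gives $\vol{g\exp_p^{-1}(\bar B_r(p))} = \vol{\bar B_r(p)} = \vol{\bar C^r_\kappa(\Sigma_p)}$ by the previous paragraph; since $\bar C^r_\kappa(\Sigma_p) \subseteq g\exp_p^{-1}(\bar B_r(p))$, the set-theoretic difference has zero $n$-volume. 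But $U \setminus \bar C^r_\kappa(\Sigma_p)$ is a nonempty (it contains $t_0\xi$ because $t_0 > r$) open subset of the Alexandrov space $\bar C^R_\kappa(\Sigma_p)$, hence carries positive $n$-volume, a contradiction. So $|p, g\exp_p(t\xi)| = t$ for every $\xi \in \Sigma_p$ and every $t \in [0, R]$.

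Finally, with radial distances preserved, set $\gamma_\xi(t) := g\exp_p(t\xi)$. The distance non-increasing property of $g\exp_p$ applied to the radial segment of the cone yields $L(\gamma_\xi|_{[0,t]}) \le t$, while $|p, \gamma_\xi(t)| = t$ forces $L(\gamma_\xi|_{[0,t]}) \ge t$. Thus $\gamma_\xi$ is a unit-speed minimizing geodesic from $p$ with initial direction $\xi$ (the latter being a defining property of $g\exp_p$), so $\gamma_\xi(t) = \exp_p(t\xi)$ for all $t \in [0, R]$, proving both assertions simultaneously. The main potential subtlety lies in verifying the containment $g\exp_p(\bar C^r_\kappa(\Sigma_p)) \supseteq B_r(p)$ at every intermediate radius; this is resolved cleanly by the existence of minimizing geodesics from $p$ in the compact Alexandrov space $X$ together with the fact that $g\exp_p$ extends $\exp_p$ on its domain of definition.
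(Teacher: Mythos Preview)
Your proof is correct and takes a genuinely different route from the paper's. The paper works with the \emph{inverse} exponential map $\exp_p^{-1}(x)=(|px|,\drn_p^x)$, which is distance non-decreasing by Toponogov comparison; the volume hypothesis forces its image to have full measure in $C_\kappa^R(\Sigma_p)$, hence to be dense, and the paper then \emph{defines} $\exp_p$ on all of $\bar C_\kappa^R(\Sigma_p)$ as the continuous extension of the (partial) inverse of $\exp_p^{-1}$, finally checking a geodesic-extension property. You instead work directly with the already-defined map $g\exp_p$: you push the global volume equality down to every intermediate radius via Lemma~\ref{vol.preserve}, obtain radial distance preservation from a clean measure-theoretic contradiction, and conclude that $g\exp_p$ sends each radial segment to a minimizing geodesic, hence coincides with $\exp_p$. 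Your approach is arguably more direct and makes efficient reuse of Lemma~\ref{vol.preserve}; it does, however, lean on the (standard) fact that $g\exp_p$ agrees with the ordinary exponential along minimizing geodesics from $p$, which you rightly flag as the one subtlety. The paper's construction avoids that dependence by building $\exp_p$ from scratch via density, and as a byproduct records explicitly that every geodesic from $p$ extends to length $R$.
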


\begin{proof}
Clearly, the map $\exp^{-1}_p:\bar B_R(p)\to
\bar C_{\kappa}^R(\Sigma_p)$ (If there is more than one image, we will pick one)
is distance non-decreasing. Because
  $$\vol {C_{\kappa}^R(\Sigma_p)}=\vol X\le \vol{\exp_p^{-1}(X)}\le \vol {C_{\kappa}^R(\Sigma_p)},$$
  $\exp_p^{-1}(X)$ is dense in ${C_{\kappa}^R(\Sigma_p)}$. For any $z\in {C_{\kappa}^R(\Sigma_p)}$,
  there is a sequence $x_i\in X$, such that $\exp_p^{-1}(x_i)=z_i\to z$. Let $\exp_p:{C_{\kappa}^R
  (\Sigma_p)}\to X$; $\exp_p(z)=\dsp\lim_{i\to\infty} x_i$. Such $\exp_p$ is well defined,
since if there is another sequence $\exp_p^{-1}(x_i')=z_i'\to z$, then
$$d(\lim_{i\to\infty}x_i,\lim_{i\to\infty}x_i')
=\lim_{i\to\infty}d(x_i,x_i')
\le\lim_{i\to\infty}d(z_i,z_i')
=0.
$$
It's clear that $\exp_p$, defined as an extension of $\exp_p^{-1}$, is distance
non-increasing. Moreover, it preserves the distance along the radio direction.

We now show that any geodesic from $p=\exp_p(\tilde o)$ to $q=\exp_p(\tilde q)\in
B_R(p)$ can be extended. Therefore $\exp_p$ is a bijection since geodesic does not
bifurcate. Let $\geod{\tilde o\tilde q}$ be the geodesic in $C_{\kappa}^R(\Sigma_p)$
such that $\exp_p(\geod{\tilde o\tilde q})=\geod{pq}$ and $\tilde q'\in C_{\kappa}^R
(\Sigma_p)$ be the extended point of $\geod{\tilde o\tilde q}$. Then
$$|pq|+|qq'|\leq |\tilde o\tilde{q}|+|\tilde q\tilde{q}'|
=|\tilde o\tilde{q}'|=|pq'|,
$$ which forces $\geod{pq}\cup\geod{qq'}$ being a
geodesic.
\end{proof}

\begin{proof} [{\bf Proof of Theorem \ref{open.iso}}] For $X\in \mathcal A^R_\kappa(\Sigma)$ with $\text{vol}(X)=
v(\Sigma,\kappa,R)$, by Lemma \ref{exp.delta} and Lemma \ref{exp} we see that $\exp_p: C^R_\kappa(\Sigma)\to B_R(p)$
is a distance non-increasing onto map which satisfies the assumptions in Theorem A (note that $\exp_p: \bar C^R_
\kappa(\Sigma_p)\to \bar B_R(p)=X$ may not satisfy the assumption of Theorem A).
\end{proof}

%\begin{prop}\label{open.iso}
%If $\vol{B_r(p)}=\vol{C_{\kappa}^r(\Sigma_p)}$, then $B_r(p)$ is isometric to
%$C_{\kappa}^r(\Sigma_p)$ in terms of their intrinsic metrics.
%\end{prop}

%\begin{proof}
%By Lemma \ref{exp}, $\exp_p:C_{\kappa}^r(\Sigma_p)\to B_r(p)$ is a well defined distance non-increasing onto.
%Thus by Lemma \ref{exp.delta} and Theorem A, $\exp_p:C_{\kappa}^r(\Sigma_p)\to B_r(p)$ is
%an isometry in terms of their intrinsic metrics.
%\end{proof}

In the proof of Theorem \ref{iso.invol}, our main technique lemma is Lemma \ref{non-cross}. Let $\phi:\Sigma\times\{R\}\to \Sigma\times\{R\}$ be defined as in Theorem \ref{iso.invol}. We first observe that $\phi$ is an involution. Let $L_p(X)=\exp_p(\Sigma\times\{R\})=\{x\in X: |px|=R\}$.

\begin{lem}\label{invol}

Let $X=\bar C_{\kappa}^R(\Sigma)/x\sim x'\in\Alexnk$ defined as in Theorem \ref{iso.invol}. For any $q\in L_p(X)$, if $\tilde q_1\neq\tilde q_2$ with $\exp_p(\tilde q_1)=\exp_p(\tilde q_2)=q$, then the
loop $\exp_p(\geod{\tilde o\tilde q_1})\cup \exp_p(\geod{\tilde o\tilde q_2})$
forms a local geodesic at $q$. Consequently, $\exp^{-1}_p(q)$ contains at most 2 points.

\end{lem}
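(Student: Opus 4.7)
The plan is to prove the local-geodesic claim by contradiction, and then deduce the cardinality bound from the non-branching of local geodesics in Alexandrov spaces. Let $\gamma_i=\exp_p(\geod{\tilde o\tilde q_i})$ denote the two radial minimal geodesics from $p$ to $q$, each of length $R$, and let $\xi_i\in\Sigma_q$ be the initial direction at $q$ of the reverse $\bar\gamma_i$. Set $\alpha:=|\tilde q_1\tilde q_2|_{\Sigma}>0$ and $\beta:=|\xi_1\xi_2|_{\Sigma_q}$. The loop $\gamma_1\cup\bar\gamma_2$ is a local geodesic at $q$ precisely when $\beta=\pi$, so I would assume $\beta<\pi$ and seek a contradiction.

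For small $\epsilon>0$, take $x=\gamma_1(R-\epsilon)$ and $y=\gamma_2(R-\epsilon)$; then $|qx|=|qy|=\epsilon$ and $\drn_q^x=\xi_1$, $\drn_q^y=\xi_2$, so $\measuredangle xqy=\beta$. Alexandrov hinge comparison ($\text{curv}\ge\kappa$) yields $|xy|\le 2\epsilon\sin(\beta/2)+O(\epsilon^3)$, which is strictly less than $2\epsilon$ once $\epsilon$ is small. Let $c$ be a minimal geodesic in $X$ from $x$ to $y$ with $L(c)=|xy|<2\epsilon$, and consider two cases. \textbf{(A)} If $c$ meets $L_p(X)$ at some $z$, then $|xz|\ge|pz|-|px|=\epsilon$ and $|zy|\ge\epsilon$, so $L(c)\ge|xz|+|zy|\ge 2\epsilon$---contradiction. \textbf{(B)} Otherwise $c\subset B_R(p)$, and by Theorem \ref{open.iso} the pullback $\tilde c:=\exp_p^{-1}(c)$ is a curve in the open cone $C_\kappa^R(\Sigma)$ of length $<2\epsilon$, joining the radial points $\tilde x\in\geod{\tilde o\tilde q_1}$ and $\tilde y\in\geod{\tilde o\tilde q_2}$ at cone-radius $R-\epsilon$. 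But viewed as a curve in the ambient cone $C_\kappa(\Sigma)$, its length is at least the cone distance $|\tilde x\tilde y|_{C_\kappa(\Sigma)}$, which the $\kappa$-cosine law evaluates to a positive constant of order $2R\sin(\alpha/2)$ (using $0<\alpha\le\pi$). For small enough $\epsilon$ this exceeds $2\epsilon$, the second contradiction. Hence $\beta=\pi$.

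For the cardinality bound, suppose there is a third preimage $\tilde q_3$ with geodesic $\gamma_3$. Applying the first part to the pairs $(\gamma_1,\gamma_2)$ and $(\gamma_1,\gamma_3)$, both $\gamma_1\cup\bar\gamma_2$ and $\gamma_1\cup\bar\gamma_3$ are local geodesics in $X$ extending $\gamma_1$ past $q$. By the non-branching of local geodesics in Alexandrov spaces---two local geodesics that coincide on an initial subinterval coincide on their whole common domain---it follows that $\bar\gamma_2=\bar\gamma_3$; using the injectivity of $\exp_p$ on the open cone (Theorem \ref{open.iso}) then forces $\geod{\tilde o\tilde q_2}=\geod{\tilde o\tilde q_3}$, so $\tilde q_2=\tilde q_3$, contradicting distinctness. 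The main technical hurdle is Case B of the first part: one must rule out a short shortcut from $x$ to $y$ that slips entirely inside $B_R(p)$, and this is accomplished precisely because Theorem \ref{open.iso} converts such a shortcut into a curve in the open cone whose length is controlled from below by the $\kappa$-cosine law, independently of how closely $c$ approaches $L_p(X)$.
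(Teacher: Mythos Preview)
Your proof is correct and follows the same two-case skeleton as the paper: either the minimal geodesic $[xy]$ meets $L_p(X)$ (your Case A, the paper's second paragraph) or it stays inside $B_R(p)$ (your Case B, the paper's first paragraph), and Case B is eliminated by pulling back through the open-ball isometry. The cardinality bound via non-branching is also identical.

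The one difference worth flagging is your detour through the angle $\beta$. You assert that the loop is a local geodesic \emph{precisely when} $\beta=\pi$, and then argue by contradiction from $\beta<\pi$. The direction you actually need at the end---that $\beta=\pi$ forces the concatenation to be locally minimizing---is true in $\Alexnk$ but is not entirely free; you have asserted it without justification. The paper sidesteps this completely: once Case B is excluded, your own Case A computation (which nowhere uses $\beta<\pi$) gives $|xy|\ge|xz|+|zy|\ge 2\epsilon=|xq|+|qy|$, and combined with the triangle inequality $|xy|\le|xq|+|qy|$ this is already the statement that $q$ lies on a minimal geodesic from $x$ to $y$. So the hinge comparison and the assumption $\beta<\pi$ are unnecessary; the direct argument is shorter and avoids invoking the extra fact about angle-$\pi$ concatenations.
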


\begin{proof}
It's clear that $\exp_p(\geod{\tilde o\tilde q_i})$ are minimal geodesics,
$i=1,2$. Let $x_i\in X$ be a point on $\exp_p(\geod{\tilde o \tilde q_i})$
and $\tilde x_i=\exp_p^{-1}(x_i)$, $i=1,2$. We claim that if $x_1,x_2$ are both
close to $q$ enough, the geodesic $\geod{x_1x_2}$ intersects with
$L_p(X)$. If not, then $\geod{x_1x_2}\subset B_R(p)$. By the
assumption, $|x_1x_2|_X=|\tilde x_1\tilde x_2|_{\bar
C_{\kappa}^R(\Sigma)}$. Let $x_1, x_2\to q$. We get that $|x_1x_2|_X\to 0$ and
$|\tilde x_1\tilde x_2|_{\bar C_{\kappa}^R(\Sigma)}\to |\tilde q_1\tilde q_2|_{\bar
C_{\kappa}^R(\Sigma)}>0$, a contradiction.

Let $a\in \geod{x_1x_2}\cap L_p(X)$, it remains to show that $a=q$. For $i=1,2$,
$$|x_ia|\ge|pa|-|px_i|=|pq|-|px_i|=|x_iq|.$$
Thus
$$|x_1q|+|x_2q|\le |x_1a|+|x_2a|=|x_1x_2|,$$
which forces both of the above inequalities to be equalities, and thus $a=q$.
\end{proof}

As a corollary of Lemma \ref{invol}, we conclude that for $X\in \mathcal A^R_\kappa(\Sigma)$, $\kappa>0$ and $\frac{\pi}{2\sqrt \kappa}<R<\frac{\pi}{\sqrt \kappa}$, $\text{vol}(C_\kappa^R(\Sigma))$ is not the optimal upper bound for $\text{vol}(X)$ (c.f. [GP]). Equivalently, we have

\begin{cor} \label{pso.non.opt}  Assume $X\in\mathcal A_{\kappa}^R(\Sigma)$ with $\text{vol}(X)
=\text{vol}(\bar C_\kappa^R(\Sigma))$ and $\kappa>0$, then
$R\leq\frac{\pi}{2\sqrt \kappa}$ or $R=\frac{\pi}{\sqrt \kappa}$. In the
second case, $X=C_\kappa(\Sigma)$ which is the $k$-suspension of
$\Sigma$.
\end{cor}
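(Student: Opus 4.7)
The plan is to argue by contradiction: suppose $\kappa>0$ and $\pi/(2\sqrt\kappa)<R<\pi/\sqrt\kappa$, and derive an inconsistency with $X=\bar B_R(p)$. The main tool will be Toponogov's midpoint comparison against $\mathbb{S}^2_\kappa$, applied to isoceles triangles $\triangle pq_1q_2$ whose two sides from $p$ each have length $R$. First I would show that the distance-$R$ sphere $L_p(X):=\{x\in X:|px|=R\}$ has at most one point, and then combine this with Lemma \ref{invol} to force $|\Sigma|\leq 2$, contradicting $\dim\Sigma\geq 1$.

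For the distance-sphere step, I take $q_1,q_2\in L_p(X)$ with $c:=|q_1q_2|>0$ and $\alpha\in[0,\pi]$ the angle at $p$. Toponogov's ``fat triangle'' inequality gives $c\leq\tilde c(\alpha)$, where $\tilde c(\alpha)$ is the opposite side in the $\mathbb{S}^2_\kappa$-triangle with sides $R,R$ and included angle $\alpha$; a direct cosine-law computation shows $\tilde c$ is monotone increasing in $\alpha$ on $(0,\pi)$, so $c\leq\tilde c(\pi)=2\pi/\sqrt\kappa-2R$ in this range of $R$. Hence the perimeter $2R+c\leq 2\pi/\sqrt\kappa$, the comparison triangle $\tilde\triangle\tilde p\tilde q_1\tilde q_2\subset\mathbb{S}^2_\kappa$ with these side lengths exists, and Toponogov's midpoint inequality yields $|pm|_X\geq|\tilde p\tilde m|$ for the midpoints $m,\tilde m$ of $[q_1q_2]$ and $[\tilde q_1\tilde q_2]$ respectively.

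The core calculation is a sign analysis in the isoceles right subtriangle $\tilde\triangle\tilde p\tilde m\tilde q_1$ (right angle at $\tilde m$). Spherical Pythagoras gives
$$\cos(\sqrt\kappa R)=\cos(\sqrt\kappa|\tilde p\tilde m|)\cos(\sqrt\kappa c/2).$$
For $R>\pi/(2\sqrt\kappa)$ the left-hand side is strictly negative, while $0<c<\pi/\sqrt\kappa$ forces $\cos(\sqrt\kappa c/2)\in(0,1)$. Dividing yields $\cos(\sqrt\kappa|\tilde p\tilde m|)<\cos(\sqrt\kappa R)$, and since both angles lie in $(\pi/2,\pi)$ and $\cos$ is strictly decreasing there, $|\tilde p\tilde m|>R$. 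Hence $|pm|_X>R$, contradicting $m\in X=\bar B_R(p)$.

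To conclude, $|L_p(X)|\leq 1$, so $L_p(X)=\{q\}$ is a single point, $\exp_p$ maps all of $\Sigma\times\{R\}$ onto $q$, and Lemma \ref{invol} bounds $|\exp_p^{-1}(q)|\leq 2$, giving $|\Sigma|\leq 2$ and contradicting $\dim\Sigma\geq 1$. Combined with Myers' bound $R\leq\pi/\sqrt\kappa$ in $\Alexnk$, this leaves $R\leq\pi/(2\sqrt\kappa)$ or $R=\pi/\sqrt\kappa$. In the latter case $\sin(\sqrt\kappa R)=0$ collapses $\Sigma\times\{R\}$ to a single point already inside $C_\kappa(\Sigma)$, so $\bar C_\kappa^R(\Sigma)$ is the full $\kappa$-suspension and $X=C_\kappa(\Sigma)$. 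I expect the main obstacle to be the careful sign-tracking in the spherical Pythagoras identity, which is precisely what makes the argument turn on the threshold $R=\pi/(2\sqrt\kappa)$.
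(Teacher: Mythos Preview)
Your proof is correct and follows essentially the same approach as the paper: show that $L_p(X)$ must be a single point via Toponogov's point-on-side comparison (you take the midpoint and make the spherical Pythagoras computation explicit, whereas the paper just asserts $|\tilde p\tilde c|>R$ for an arbitrary interior point $\tilde c$), then invoke Lemma~\ref{invol} to bound $|\exp_p^{-1}(q)|\le 2$ and reach a contradiction. Your extra care in verifying the perimeter bound so that the comparison triangle exists is a nice addition, and your handling of $R=\pi/\sqrt\kappa$ is the same in spirit as the paper's appeal to Theorem~\ref{open.iso}.
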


\begin{proof}
Assume $\frac{\pi}{2\sqrt \kappa}<R<\frac{\pi}{\sqrt \kappa}$. Let $p\in X$ such that $\Sigma_p=\Sigma$. It's clear that $\text{rad}_p(X)=R$.
We claim that
$L_p(X)=\{q\}$ has only one point. Then by Lemma \ref{invol},
$\Sigma_p\times\{R\}=\exp_p^{-1}(q)$ contains at most 2 points, a
contradiction. Let $a\neq b\in L_p(X)$, consider the triangle $\triangle pab$
and the compared triangle $\widetilde{\triangle} pab\in S^2_\kappa$. Take
$c\in \geod{ab}$ and the corresponding $\tilde{c}\in
\geod{\tilde{a}\tilde{b}}$ with $|ac|=|\widetilde{a}\widetilde{c}|$. By
the triangle comparison, $|pc|\geq |\tilde{p}\tilde{c}|>R$, a contradiction.

Note that the case $R=\frac \pi{\sqrt\kappa}$ follows from Theorem \ref{open.iso}.
\end{proof}

It remains to show that $\phi$ is an isometry. The following lemma plays an important role in the study of the angles in the gluing space $X$.

\begin{lem}\label{cone.angle} Let $a,b\in C_\kappa(\Sigma)$. Then $\measuredangle apb=\tilde
\measuredangle apb$ and $\measuredangle pab=\tilde \measuredangle pab$.
\end{lem}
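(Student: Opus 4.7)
The plan is to realize the triangle $\triangle pab$ inside an isometric copy of a sector of $\mathbb{S}^2_\kappa$ sitting in $C_\kappa(\Sigma)$, from which the equality of actual and comparison angles will follow automatically. Here $p$ denotes the vertex of the cone.

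Assuming the nontrivial case that $a,b\ne p$ and $a,b$ do not lie on a common radial ray, write $a=(x_a,s_a)$ and $b=(x_b,s_b)$ with $x_a,x_b\in\Sigma$ and $s_a,s_b>0$. Set $\ell=|x_ax_b|_\Sigma\in(0,\pi]$ (the upper bound using $\text{curv}\,\Sigma\ge 1$) and pick a minimal geodesic $\sigma:[0,\ell]\to\Sigma$ with $\sigma(0)=x_a$ and $\sigma(\ell)=x_b$. Consider the sector
$$S_\sigma=\{(\sigma(t),s):t\in[0,\ell],\;s\ge 0\}\subset C_\kappa(\Sigma),$$
together with a sector $\tilde S\subset \mathbb{S}^2_\kappa$ of opening angle $\ell$ based at some $\tilde p$, parametrized by polar coordinates $(\theta,s)$.

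The key step is to verify that the natural map $\psi:S_\sigma\to\tilde S$ sending $(\sigma(t),s)\mapsto(t,s)$ and $p\mapsto\tilde p$ is distance preserving. This is immediate from the $\kappa$-cone cosine law: the cone distance between $(\sigma(t_1),s_1)$ and $(\sigma(t_2),s_2)$ depends only on $s_1$, $s_2$, and $|\sigma(t_1)\sigma(t_2)|_\Sigma=|t_1-t_2|$ (minimality of $\sigma$ together with $\ell\le\pi$), which matches the spherical cosine-law distance in $\tilde S$. Consequently the $\mathbb{S}^2_\kappa$-geodesic from $\psi(a)$ to $\psi(b)$ pulls back through $\psi$ to a curve in $S_\sigma$ of length $|ab|$, hence a geodesic $\gamma\subset S_\sigma$ from $a$ to $b$ in the cone; the radial segments $\geod{pa}$ and $\geod{pb}$ already sit inside $S_\sigma$. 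Thus $\triangle pab$, realized with these three sides, embeds isometrically into $\mathbb{S}^2_\kappa$ via $\psi$. Since comparison angles at any vertex depend only on the three pairwise distances---which $\psi$ preserves---the actual angles agree with the comparison angles, yielding $\measuredangle apb=\tilde\measuredangle apb$ and $\measuredangle pab=\tilde\measuredangle pab$.

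The main subtlety is that $a$ and $b$ may be joined by more than one geodesic in $C_\kappa(\Sigma)$, for instance when $\ell=\pi$ and $\Sigma$ admits several minimizers between $x_a$ and $x_b$. The resolution is that any geodesic from $a$ to $b$ that avoids $p$ projects radially to a minimal geodesic $\sigma'$ in $\Sigma$ of length $\ell$, hence lies in the sector $S_{\sigma'}$, and the sector argument applies verbatim to that choice of geodesic. A geodesic through $p$ only occurs when the triangle $\triangle pab$ degenerates, in which case both identities are trivial. The vertex angle can also be read off directly: the space of directions at $p$ is $\Sigma$, so $\measuredangle apb=|x_ax_b|_\Sigma=\ell$, which the cosine law identifies with $\tilde\measuredangle apb$.
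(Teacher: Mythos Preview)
Your argument is correct and takes a genuinely different route from the paper. The paper proceeds computationally: after noting that $\measuredangle apb=\tilde\measuredangle apb$ follows from the cone cosine law, it extends $\geod{pa}$ slightly to a point $a'$, writes down the (Euclidean, for $\kappa=0$) cosine laws for $\triangle aa'b$, $\triangle pa'b$, $\triangle pab$, and combines them with the Toponogov inequality $\measuredangle a'ab\ge\tilde\measuredangle a'ab$ and the adjacent-angle identity $\measuredangle a'ab=\pi-\measuredangle pab$ to squeeze out $\measuredangle pab=\tilde\measuredangle pab$. Your approach instead realizes the whole triangle inside a totally geodesic flat sector $S_\sigma\cong\tilde S\subset\mathbb S^2_\kappa$, from which equality of all three angles with their comparison angles is immediate and uniform in $\kappa$.

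What each buys: the paper's computation is self-contained and uses nothing about the global structure of geodesics in a $\kappa$-cone beyond the cosine law defining the metric. Your argument is cleaner and more conceptual, and simultaneously yields $\measuredangle abp=\tilde\measuredangle abp$ as well, but it leans on the (standard, e.g.\ [BBI]) fact that every minimizing geodesic in $C_\kappa(\Sigma)$ avoiding the vertex projects radially to a \emph{minimal} geodesic in $\Sigma$; you invoke this in your last paragraph without proof. Since the applications in the paper (Corollaries~\ref{stay.bdy} and~\ref{double-cross}) use the lemma for a \emph{given} geodesic segment $\geod{ab}$, this step is genuinely needed, so it would be worth either citing it or sketching the one-line reason: any curve in the cone over a non-minimal $\Sigma$-path can be shortened by replacing the base path with a minimizer while keeping the radial profile.
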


\begin{proof}

The proofs are essentially same for different $\kappa$. For simplicity, we only give a proof for $\kappa=0$. Note that $\measuredangle apb=\tilde \measuredangle apb$ by the definition of $C_\kappa(\Sigma)$.

To see $\measuredangle pab=\tilde \measuredangle pab$, shortly extend
the geodesic $\geod{pa}$ to $a'$ and apply the cosine law to the
triangles $\triangle aa'b$, $\triangle pa'b$ and $\triangle pab$. We get

\begin{align}
|a'b|^2&=|aa'|^2+|ab|^2-2|aa'||ab|\cos\widetilde{\measuredangle}a'ab,
\label{cone.angle.e1}\\
|a'b|^2&=|pa'|^2+|pb|^2-2|pa'||pb|\cos\measuredangle apb
\label{cone.angle.e2}\\
&=(|pa|+|aa'|)^2+|pb|^2-2(|pa|+|aa'|)|pb|\cos\measuredangle apb,
\notag
\\
|ab|^2&=|pa|^2+|pb|^2-2|pa||pb|\cos\measuredangle apb.
\label{cone.angle.e3}
\end{align}
Calculating $(\ref{cone.angle.e1})+(\ref{cone.angle.e3})-(\ref{cone.angle.e2})$, we get
\begin{align*}
0&=|ab|\cos\widetilde{\measuredangle}a'ab
+|pa|-|pb|\cos\measuredangle apb
\\
&\ge |ab|\cos\measuredangle a'ab
+|pa|-|pb|\cos\measuredangle apb
\\
&=-|ab|\cos\measuredangle pab
+|pa|-|pb|\cos\measuredangle apb.
\end{align*}
Since $\measuredangle pab\ge\tilde\measuredangle pab$ and
$\measuredangle apb=\tilde\measuredangle apb$, the above inequality implies
\begin{align*}
|pa|&\le |ab|\cos\measuredangle pab +|pb|\cos\measuredangle apb
\\
&\le |ab|\cos\tilde\measuredangle pab +|pb|\cos\tilde\measuredangle apb
=|pa|,
\end{align*}
which forces $\measuredangle pab=\tilde\measuredangle pab$.
\end{proof}

\begin{cor} \label{stay.bdy}
  Let $x,y\in X$ be two points. If $\geod{xy}\cap L_p(X)\neq\varnothing$, then either $\geod{xy}\subset L_p(X)$ or $\geod{xy}\cap L_p(X)$ is finite.
\end{cor}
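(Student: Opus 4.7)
Proof plan for Corollary \ref{stay.bdy}. Parametrize $c\colon[0,\ell]\to X$ by arclength and set $f(t)=|pc(t)|$, $I=c^{-1}(L_p(X))=f^{-1}(R)$, $J=[0,\ell]\setminus I$. Since $X=\bar B_R(p)$, we have $f\le R$ with equality precisely on $I$. Assuming $I\neq\varnothing$, it suffices to prove that if $I\neq[0,\ell]$ then $I\subseteq\{0,\ell\}$, so that $|I|\le 2$ is finite in particular. My strategy is to show $I\cap(0,\ell)$ is both open and closed in the connected set $(0,\ell)$; by connectedness $I\cap(0,\ell)$ is $\varnothing$ or $(0,\ell)$, and the latter alternative together with closedness of $I$ would force $I=[0,\ell]$, contradicting hypothesis.

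The key geometric input is that no $a\in I\cap(0,\ell)$ admits a right-neighborhood $(a,a+\varepsilon)\subset J$. I would compute $f'(a^+)$ in two incompatible ways. First, since $a$ is interior, $c$ is a two-sided geodesic at $c(a)$ and $\dot c,-\dot c$ are antipodal in $\Sigma_{c(a)}X$, so the triangle inequality in $\Sigma_{c(a)}X$ applied with third vertex $\drn^p_{c(a)}$ yields $\angle(\dot c,\drn^p)+\angle(-\dot c,\drn^p)\ge\pi$. Because $f\le R=f(a)$ on both sides of $a$, first variation forces $f'(a^+)\le 0$ and $f'(a^-)\ge 0$, hence both angles are $\le\pi/2$; combined with the sum $\ge\pi$, both must equal $\pi/2$ exactly, and in particular $f'(a^+)=0$.

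Second, by the open-ball rigidity (Theorem \ref{open.iso}), $\exp_p$ lifts $c|_{[a,a+\varepsilon)}$ to a geodesic $\tilde c$ in $\bar C^R_\kappa(\Sigma_p)$ with $\tilde c(a)\in\Sigma_p\times\{R\}$ and $\tilde c(t)\in C^R_\kappa(\Sigma_p)$ (the open cone) for $t\in(a,a+\varepsilon)$, and $f(t)=|\tilde o\,\tilde c(t)|$ on this range. Applying Lemma \ref{cone.angle} to the triangle $\tilde o,\tilde c(a),\tilde c(t)$ identifies the entering angle $\alpha:=\angle(\dot{\tilde c}(a),\drn^{\tilde o}_{\tilde c(a)})$ with the comparison angle, so that the $\kappa$-cone law of cosines
\[
\cos(\sqrt\kappa f(t))=\cos(\sqrt\kappa R)\cos(\sqrt\kappa(t-a))+\sin(\sqrt\kappa R)\sin(\sqrt\kappa(t-a))\cos\alpha
\]
(for $\kappa>0$, with the obvious Euclidean and hyperbolic analogues) holds on $[a,a+\varepsilon)$. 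If $\alpha\ge\pi/2$, this formula forces $f(t)\ge R$ for $t$ just above $a$, contradicting $\tilde c(t)\in C^R_\kappa(\Sigma_p)$. Hence $\alpha<\pi/2$ strictly, so $f'(a^+)=-\cos\alpha<0$, incompatible with $f'(a^+)=0$.

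Finally, a hypothetical $t_n\searrow t_0$ in $J$ with $t_0\in I\cap(0,\ell)$ is ruled out as follows: writing $(a_n,b_n)$ for the $J$-component containing $t_n$, one necessarily has $a_n\ge t_0$ (otherwise $t_0\in(a_n,b_n)\subset J$), so either $a_n=t_0$ eventually or some $a_n>t_0$, and in either case we find $a\in I\cap(0,\ell)$ with $(a,a+\varepsilon)\subset J$, contradicting the key step. The symmetric argument handles $t_n\nearrow t_0$, so $I\cap(0,\ell)$ is open (and trivially relatively closed) in $(0,\ell)$, completing the proof. The main obstacle is the delicate tension between the triangle-inequality conclusion $f'(a^+)=0$ and the cone-rigidity conclusion $f'(a^+)<0$; the remaining steps are purely topological bookkeeping.
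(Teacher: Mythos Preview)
Your argument has a genuine gap in step~(1), and the conclusion you are driving toward---that $I\subseteq\{0,\ell\}$ whenever $I\neq[0,\ell]$---is strictly stronger than the finiteness asserted in Corollary~\ref{stay.bdy} and is \emph{false} in general. The entire iterative construction in the proof of Lemma~\ref{non-cross} is built around minimal geodesics $\geod{u_0y}$ (with $u_0\in B_R(p)$) that meet $L_p(X)$ at interior points $a_1,b_1$; if your conclusion held, that construction would be vacuous.

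The error is that you treat $\drn^p_{c(a)}$ as a \emph{single} direction when you invoke the triangle inequality in $\Sigma_{c(a)}$. By Corollary~\ref{double-cross}, any interior crossing point $c(a)$ lies in $L_p^2(X)$, and by Lemma~\ref{invol} there are then \emph{two} directions $\xi_1,\xi_2$ to $p$ with $|\xi_1\xi_2|_{\Sigma_{c(a)}}=\pi$. The first-variation bounds $f'(a^+)\le0$ and $f'(a^-)\ge0$ say only that
\[
\min_i\angle(\dot c,\xi_i)\le\tfrac{\pi}{2}
\qquad\text{and}\qquad
\min_i\angle(-\dot c,\xi_i)\le\tfrac{\pi}{2},
\]
and these minima are typically realized at \emph{different} $\xi_i$, so you cannot feed both into the single inequality $\angle(\dot c,\xi)+\angle(-\dot c,\xi)\ge\pi$. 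Concretely, if the cone lift in your step~(2) gives $\angle(\dot c,\xi_1)=\alpha<\pi/2$, then (since $\xi_1,\xi_2$ are antipodal) one also has $\angle(-\dot c,\xi_2)\le\alpha<\pi/2$, so both first-variation bounds hold with strict inequality. Thus step~(1) yields only $f'(a^+)\le0$, which is entirely consistent with the $f'(a^+)=-\cos\alpha<0$ you correctly derive in step~(2); no contradiction arises.

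The paper's proof avoids this by assuming the intersection is infinite and working at an \emph{accumulation} point $a$. On the accumulation side one genuinely obtains $f'(a^+)=0$, which forces $\angle(\dot c,\xi)\ge\pi/2$ for \emph{every} direction $\xi$ to $p$, not just the minimizing one; on the other side $a$ is isolated in $I$, so a short segment $\geod{ba}\setminus\{a\}$ lies in $B_R(p)$ and Lemma~\ref{cone.angle} gives $\measuredangle pab<\pi/2$ for the specific direction coming from that lift. It is this asymmetry between the two sides---accumulated versus isolated---that produces the contradiction, and it is unavailable at an ordinary isolated crossing point.
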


\begin{proof}
  Let $x\notin L_p(X)$, we shall show that $\geod{xy}\cap L_p(X)$ is finite. Let $a\in\geod{xy}\cap L_p(X)$ be the accumulation point which is closest to $x$. Clearly $a\neq x$ since $x\notin L_p(X)$. Thus there is a geodesic segment $\geod{ba}$ of $\geod{xy}$ with that $\geod{ba}-\{a\}\subset B_R(p)$. Since $|pb|<|pa|=R$, by Lemma \ref{cone.angle},
$$\measuredangle pab=\tilde\measuredangle pab<\frac\pi2.$$
  On the other hand, because there are $a_i\in \geod{xy}\cap L_p(X)$ with $a_i\to a$ as $i\to\infty$ and $|pa|=|pa_i|=R$, by the first variation formula, we get
$$\measuredangle pay=\frac\pi2.$$
Therefore $\pi=\measuredangle pab+\measuredangle pay<\pi$, a contradiction.
\end{proof}

As another corollary, we prove Theorem \ref{iso.invol} for the special case $\kappa>0$ and $R=\frac\pi{2\sqrt\kappa}$.

\begin{cor} \label{spec.iso}
  Theorem \ref{iso.invol} holds for the case $\kappa>0$ and $R=\frac\pi{2\sqrt\kappa}$.
\end{cor}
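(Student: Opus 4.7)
The plan is to exploit the special geometry of $R=\pi/(2\sqrt\kappa)$: in $S^2_\kappa$, an isosceles triangle with the two equal sides of length $R$ automatically has right angles at the other two vertices, regardless of the length of the third side. Combined with the fact that $d_p(\cdot)=|p\cdot|$ attains its maximum on $L_p(X)$, this will force every minimal geodesic between two boundary points to meet the radial geodesics from $p$ at right angles, and Toponogov rigidity will then propagate this across all four combinations of radial lifts.

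Fix $x,y\in L_p(X)$, let $\gamma$ be a minimal geodesic from $x$ to $y$, and set $L=|xy|_X$. For each lift $\tilde x\in \exp_p^{-1}(x)$ the curve $\exp_p\geod{\tilde o\tilde x}$ is a minimal geodesic from $p$ to $x$, and by Theorem \ref{open.iso} these exhaust the minimal geodesics from $x$ to $p$. For each such geodesic, Toponogov gives $\measuredangle(\uparrow_x^{\tilde x},\gamma'(0))\ge\pi/2$, because the comparison angle at $x$ in $S^2_\kappa$ is $\pi/2$ whenever $|px|=|py|=R$. On the other hand, the first variation of $d_p$ at its maximum point $x$ yields $\min_{\tilde x}\measuredangle(\uparrow_x^{\tilde x},\gamma'(0))\le\pi/2$. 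Hence some lift $\tilde x^*$ achieves equality, and analogously some $\tilde y^*$ at $y$. By the Toponogov flat triangle lemma, the triangle built from $\exp_p\geod{\tilde o\tilde x^*}$, $\exp_p\geod{\tilde o\tilde y^*}$, and $\gamma$ bounds an isometric copy of its $S^2_\kappa$ comparison, so all three angles agree with their comparisons: $d_\Sigma(\tilde x^*,\tilde y^*)=\sqrt\kappa L$ at $p$, and $\pi/2$ at both $x$ and $y$. The right angle at $y$ now forces the triangle built from $\exp_p\geod{\tilde o\phi(\tilde x^*)}$, $\exp_p\geod{\tilde o\tilde y^*}$, and $\gamma$ to be flat (rigidity at $y$), producing a right angle at $x$ for the direction $\uparrow_x^{\phi(\tilde x^*)}$ together with $d_\Sigma(\phi(\tilde x^*),\tilde y^*)=\sqrt\kappa L$. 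Chaining once more through $\phi(\tilde y^*)$ flattens all four triangles indexed by $\{\tilde x^*,\phi(\tilde x^*)\}\times\{\tilde y^*,\phi(\tilde y^*)\}$, so $d_\Sigma(\tilde x,\tilde y)=d_\Sigma(\phi(\tilde x),\phi(\tilde y))=\sqrt\kappa L$.

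Since $\phi$ is an involution on $\Sigma\times\{R\}$ by Lemma \ref{invol} and the above shows it preserves $d_\Sigma$, it is an isometry, so the conclusion of Theorem \ref{iso.invol} holds for our $X$. The main obstacle is the chain of rigidity applications: a right angle at $x$ for one lift of $x$ must be propagated to a right angle at $x$ for the other lift, which necessarily routes through the opposite vertex $y$ via two successive applications of the flat triangle lemma. The case where $x$ or $y$ has a single preimage is handled identically since the first-variation step is then automatic; the borderline $L=\pi/\sqrt\kappa$, where the triangle perimeter saturates the admissible range for rigidity, can be recovered by continuity from generic nearby pairs.
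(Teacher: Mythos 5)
Your opening moves are sound: at $R=\frac{\pi}{2\sqrt\kappa}$ the comparison angle at $x$ and $y$ is $\frac\pi2$, Toponogov gives $\measuredangle(\uparrow_x^{\tilde x},\gamma'(0))\ge\frac\pi2$ for every lift, the first variation of $d_p$ at its maximum forces equality for some lift $\tilde x^*$, and hinge rigidity then yields $d_\Sigma(\tilde x^*,\tilde y^*)=\sqrt\kappa\,|xy|$ for \emph{one} pair of lifts. The proof breaks at the chaining step. Rigidity of the hinge at $y$ spanned by $\uparrow_y^{\tilde y^*}$ and $-\gamma'$ produces a flat triangle whose third side is \emph{some} geodesic from $p$ to $x$; nothing identifies that side with $\exp_p\geod{\tilde o\,\phi(\tilde x^*)}$ rather than with $\exp_p\geod{\tilde o\,\tilde x^*}$ again, so you have not produced a right angle at $x$ in the direction of the other lift. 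Worse, the conclusion you draw from the chain is false. Take $\Sigma=S^1_1$, $\kappa=1$, $R=\frac\pi2$ and $\phi$ the antipodal map, so that $X$ is the round $\Bbb RP^2$. If $d_\Sigma(\tilde x_1,\tilde y_1)=\theta<\frac\pi2$, then $L=|xy|_X=\theta$ but $d_\Sigma(\phi(\tilde x_1),\tilde y_1)=d_\Sigma(\tilde x_2,\tilde y_1)=\pi-\theta\neq\sqrt\kappa L$; so it is not true that all four triangles indexed by $\{\tilde x^*,\phi(\tilde x^*)\}\times\{\tilde y^*,\phi(\tilde y^*)\}$ are flat. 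What actually has to be shown is that the \emph{matched} pair $(\phi(\tilde x^*),\phi(\tilde y^*))$ is again minimizing, and, separately, that the non-minimizing pairs also have their distances preserved; your argument addresses neither, and the second point cannot be reached by comparing against the single minimal geodesic $\geod{xy}$ at all.

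The paper's proof takes an entirely different and shorter route that avoids angle rigidity. Assuming $|\tilde x_1\tilde y_1|>|\tilde x_2\tilde y_2|$, it picks an interior point $a$ whose preimage is close to $\tilde x_1$ and observes that a minimal geodesic $\geod{ay}$ must then meet $L_p(X)$ at some point $b\neq y$ (otherwise Theorem \ref{open.iso} would make it too long). Because $\Sigma\times\{\frac{\pi}{2\sqrt\kappa}\}$ is totally geodesic in the cone and $\exp_p$ is distance non-increasing, the subsegment $\geod{by}$ is forced to lie entirely in $L_p(X)$, contradicting the discreteness of crossing points in Corollary \ref{stay.bdy}. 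If you want to salvage your approach, you need a mechanism for transferring information from one lift at a vertex to the other lift at the \emph{same} vertex; in the paper that role is played by the local-geodesic structure of Lemma \ref{invol} here, and by Lemma \ref{non-cross} in the harder case $R<\frac{\pi}{2\sqrt\kappa}$.
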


\begin{proof}

Let $x,y\in L_p(X)$, $\tilde x_1, \tilde x_2, \tilde y_1, \tilde y_2\in \Sigma\times\{R\}$ with $\exp_p(\tilde x_1)=\exp_p(\tilde x_2)=x$, $\exp_p(\tilde y_1)=\exp_p(\tilde y_2)=y$. We will show that $|\tilde x_1\tilde y_1|_{\bar C_{\kappa}^R(\Sigma)}=|\tilde x_2 \tilde y_2|_{\bar C_{\kappa}^R(\Sigma)}$. Assume $|\tilde x_1\tilde y_1|_{\bar C_{\kappa}^R (\Sigma)}>|\tilde x_2\tilde y_2|_{\bar C_{\kappa}^R(\Sigma)}$. Then there is a point $a\notin L_p(X)$ (take $\exp_p^{-1}(a)$ be close to $x_1$) such that $\geod{ay}\cap L_p(X)$ contains a point $b\neq y$. Because $\exp_p$ is distance non-increasing and $\Sigma\times\{\frac\pi{2\sqrt\kappa}\}$ is totally geodesic, $\geod{by}\subset L_p(X)$, which contradicts to Corollary \ref{stay.bdy}.
\end{proof}

Let Fix$(\phi)=\{\tilde x\in \Sigma\times\{R\}: \phi(\tilde x)=\tilde x\}$ be the fixed points set. Let $L_p^1(X)=\exp_p(\text{Fix}(\phi))$ denote the image. Due to Lemma \ref{invol}, let $L_p^2(X)=L_p(X)-L_p^1(X)$ denote the points that are identified from exactly two points, i.e. for any $x\in L_p^2(X)$, $\exp^{-1}(x)=\{\tilde x^+, \tilde x^-\}$ contains exactly two points.

In the rest proof of Theorem \ref{iso.invol}, by Corollary \ref{stay.bdy}, \ref{spec.iso} and their proofs,
we can always assume $R<\frac{\pi}{2\sqrt\kappa}$ for $\kappa>0$ and that for any $x,y\in X$, $\geod{xy}\cap L_p(X)$ is finite if it is not empty. More over, the following corollary shows that $\,]\,xy\,[\,\cap L_p(X)\subset L_p^2(X)$, where $\,]\,xy\,[\,$ denotes the geodesic connecting $x,y$ without the end points.

\begin{cor}\label{double-cross}
  Let the assumption be as in Theorem \ref{iso.invol}. Assume $R<\frac{\pi}{2\sqrt\kappa}$ when $\kappa>0$.  For any $x,y\in X$, if $q\in\,]\,xy\,[\,\cap L_p(X)$, then $q\in L_p^2(X)$.
\end{cor}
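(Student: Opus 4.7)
The plan is to argue by contradiction, supposing $q\in\,]\,xy\,[\,\cap L_p^1(X)$, so that $\exp_p^{-1}(q)=\{\tilde q\}$ is a single point on the boundary sphere $\Sigma\times\{R\}$. Since the intersection $\geod{xy}\cap L_p(X)$ is finite by the standing assumption, I first pick neighboring points $x_1,x_2\in\geod{xy}$ strictly on opposite sides of $q$ and inside $B_R(p)$; let $\tilde x_i\in C_\kappa^R(\Sigma)$ be their unique preimages under the open-ball isometry of Theorem \ref{open.iso}.

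The next step is to force $\tilde q$ to lie on a minimizing geodesic between $\tilde x_1$ and $\tilde x_2$ in $\bar C_\kappa^R(\Sigma)$. I will use three ingredients: (i) the identity $|x_iq|_X=|\tilde x_i\tilde q|_{\bar C}$, which follows from the interior isometry by approximating $\tilde q$ from inside the open cone and invoking the continuity of $\exp_p$; (ii) the equality $|x_1x_2|_X=|x_1q|_X+|qx_2|_X$, since $q$ lies on the geodesic $\geod{xy}$; and (iii) $|x_1x_2|_X\le|\tilde x_1\tilde x_2|_{\bar C}$, since $\exp_p$ is distance non-increasing. Combined with the triangle inequality, these force $|\tilde x_1\tilde q|_{\bar C}+|\tilde q\tilde x_2|_{\bar C}=|\tilde x_1\tilde x_2|_{\bar C}$, so that $\tilde q$ lies on a minimizing geodesic from $\tilde x_1$ to $\tilde x_2$; in particular $\measuredangle\tilde x_1\tilde q\tilde x_2=\pi$ in $\Sigma_{\tilde q}(\bar C_\kappa^R(\Sigma))$.

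The final step is an angle contradiction at $\tilde q$. By Lemma \ref{cone.angle} applied at the vertex $\tilde o$, each angle $\measuredangle\tilde o\tilde q\tilde x_i$ coincides with its comparison angle $\tilde\measuredangle_\kappa\tilde o\tilde q\tilde x_i$ in $\mathbb S_\kappa^2$. In the comparison triangle one has $|\tilde o\tilde q|=R$, $|\tilde o\tilde x_i|<R$, and (by the standing assumption) $R<\pi/(2\sqrt\kappa)$ when $\kappa>0$; a direct cosine-law computation then yields $\measuredangle\tilde o\tilde q\tilde x_i<\pi/2$ for $i=1,2$. Summing, $\measuredangle\tilde o\tilde q\tilde x_1+\measuredangle\tilde o\tilde q\tilde x_2<\pi$, which contradicts the triangle inequality in $\Sigma_{\tilde q}$: this sum should be at least $\measuredangle\tilde x_1\tilde q\tilde x_2=\pi$. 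The contradiction forces $q\in L_p^2(X)$.

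As I see it, the main obstacle is the careful bookkeeping of distances at the boundary point $\tilde q$: one must confirm that the identity $|x_iq|_X=|\tilde x_i\tilde q|_{\bar C}$ survives the passage from the open ball (where $\exp_p$ is an isometry) to a boundary point, using only continuity and the distance non-increasing property. Everything else is routine, with the strict inequality $\measuredangle\tilde o\tilde q\tilde x_i<\pi/2$ relying on exactly the regime $R<\pi/(2\sqrt\kappa)$ granted by the standing assumption.
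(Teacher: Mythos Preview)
Your proof is correct and follows essentially the same idea as the paper's: both argue by contradiction, using Lemma \ref{cone.angle} to show that the two angles at $q$ (or at its lift $\tilde q$) with the radial direction are each strictly less than $\pi/2$, contradicting that $q$ is an interior point of a geodesic. The paper works directly in $X$, tacitly identifying a neighborhood of $[pq]$ with the cone via the open-ball isometry and the uniqueness of $\exp_p^{-1}(q)$, whereas you make this lifting explicit by first verifying $|\tilde x_1\tilde q|+|\tilde q\tilde x_2|=|\tilde x_1\tilde x_2|$ in $\bar C_\kappa^R(\Sigma)$; this extra step is sound and simply unpacks what the paper leaves implicit.
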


\begin{proof}
  Not losing generality, assume $x, y\notin L_p(X)$ and $\,]\,xy\,[\,\cap L_p(X)=\{q\}$. If $q\in L_p^1(X)$,
  by Lemma \ref{cone.angle}, $\measuredangle xqp=\tilde \measuredangle xqp<\frac\pi2$ and
$\measuredangle yqp=\tilde \measuredangle yqp<\frac\pi2$. Thus $\measuredangle xqp+
  \measuredangle yqp<\pi$, which contradicts to the fact that $\geod{xy}$ is a geodesic.
\end{proof}

Now we are ready to prove our main technique lemma. Let $x\in L_p^2(X)$ and $\{\tilde x^+, \tilde x^-\}=\exp_p^{-1}(x)$ denote the pre-image. Then there are exactly two geodesics $\exp_p(\geod{\tilde o\tilde x^+})$, $\exp_p(\geod{\tilde o\tilde x^-})$ connecting $x$ to $p$. To distinguish geodesics and angles, we use the following notation.
\begin{itemize}
  \item Let $\geod{px^+}$ and $\geod{px^-}$ denote $\exp_p(\geod{\tilde o\tilde x^+})$ and $\exp_p(\geod{\tilde o\tilde x^-})$ respectively.
\end{itemize}
In addition, for $y\in L_p^2(X)$ and $\exp_p^{-1}(y)=\{\tilde y^+, \tilde y^-\}$,
\begin{itemize}
  \item Let $\geod{x^\pm y^\pm}$ denote $\exp_p(\geod{\tilde x^\pm \tilde y^\pm})$;
  \item Let $|x^\pm y^\pm|$ denote the length of the geodesics $\geod{x^\pm y^\pm}$;
  \item Let $\measuredangle x^\pm py^\pm$ denote the angle between $\geod{px^\pm}$ and $\geod{py^\pm}$ at $p$;
  \item Let $\measuredangle px^\pm y^\pm$ denote the angle between $\geod{px^\pm}$ and
$\geod{x^\pm y^\pm}$ at $x$.
\end{itemize}

%As explained in the introduction, we shall construct a non-crossing piece-wise intrinsic geodesic $c_2$, and show that length$(c_2)$ is close to length$(c_1)$ up to a second order error.

\begin{lem}\label{non-cross} Let the assumption be as in Theorem \ref{iso.invol}. Assume $R<\frac{\pi}{2\sqrt\kappa}$ when $\kappa>0$. Then for any $\tilde x\neq\tilde y\in \Sigma\times\{R\}$ with $|\tilde x\tilde y|$ sufficiently small,
  $$\left|\frac{|\phi(\tilde x)\phi(\tilde y)|}{|\tilde x\tilde y|}-1\right|\le20|\tilde x\tilde y|.
$$
\end{lem}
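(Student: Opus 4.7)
The plan is to bound $|\phi(\tilde x)\phi(\tilde y)|$ above and below by $|\tilde x\tilde y|(1\pm 20|\tilde x\tilde y|)$ separately. By the involution property of $\phi$ (Lemma \ref{invol}), the two directions are essentially equivalent, so I focus on the upper bound after verifying the lower bound. The core construction produces a short piecewise-geodesic path in the opposite component $U_2$ via a cone bisection.

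Set $s=|\tilde x\tilde y|$, $x=\exp_p(\tilde x)$, $y=\exp_p(\tilde y)$. By Corollaries \ref{stay.bdy} and \ref{double-cross}, a minimal geodesic $[xy]$ in $X$ meets $L_p$ in its interior at only finitely many points, all in $L_p^2$. Splitting at such crossings and applying the estimate to each non-crossing sub-segment (and summing errors), I reduce to the case where $c_1:=\exp_p([\tilde x,\tilde y])$ is itself the minimal $[xy]$, with interior in $B_R(p)$, so $|xy|=s$. The lower bound $|\phi(\tilde x)\phi(\tilde y)|\ge s$ then follows immediately from Alexandrov's hinge comparison applied at $p$ to $[px^-]\cup[py^-]$ (opening angle $\alpha^-=d_\Sigma(\phi(\tilde x),\phi(\tilde y))$): $s=|xy|\le\tilde d_\kappa(R,R,\alpha^-)=|\phi(\tilde x)\phi(\tilde y)|$.

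For the upper bound I would use a cone bisection. Let $\tilde m$ be the midpoint of $[\tilde x,\tilde y]$ in the cone and $\tilde b\in\Sigma\times\{R\}$ its radial projection from the vertex $\tilde o$. A two-dimensional $S^2_\kappa$-slice calculation yields $|\tilde x\tilde b|=|\tilde b\tilde y|=s/2+O(s^3/R^2)$, so the boundary-bisected path has total cone length $s+O(s^3/R^2)$. Generically $b:=\exp_p(\tilde b)\in L_p^2$, so $\phi(\tilde b)$ is well-defined, and the concatenation $\exp_p([\phi(\tilde x),\phi(\tilde b)])\cup\exp_p([\phi(\tilde b),\phi(\tilde y)])$ is a piecewise geodesic in $U_2$ from $x$ to $y$ of cone length $|\phi(\tilde x)\phi(\tilde b)|+|\phi(\tilde b)\phi(\tilde y)|$, which bounds $|\phi(\tilde x)\phi(\tilde y)|$ from above. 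Setting $f(s):=\sup\{|\phi(\tilde u)\phi(\tilde v)|/|\tilde u\tilde v|:|\tilde u\tilde v|\le s\}$, iteration of the bisection gives $f(s)\le f(s/2)(1+O(s^2/R^2))$, which telescopes to $f(s)\le\bigl(\lim_{\epsilon\to 0}f(\epsilon)\bigr)(1+O(s^2))$.

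The main obstacle is the base case: showing that $\lim_{\epsilon\to 0}f(\epsilon)=1$, i.e., that $\phi$ is infinitesimally isometric on $\Sigma\times\{R\}$. To prove this I would use the Alexandrov condition on $X$ at each double point $b\in L_p^2$: the space of directions $\Sigma_b$ is the gluing of the two half-spheres $\Sigma_{\tilde b}(\bar C_\kappa^R(\Sigma))$ and $\Sigma_{\phi(\tilde b)}(\bar C_\kappa^R(\Sigma))$ along their common equator $\Sigma_{\tilde b}(\Sigma)$ via $d\phi|_{\tilde b}$; for $\Sigma_b$ to have curvature $\ge 1$, this equatorial identification must be an isometry (in the spirit of Petrunin's gluing theorem). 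Combined with Lemma \ref{cone.angle}, which gives exact equality of angles and comparison angles in the cone, this infinitesimal isometry yields the required limit, and the constant $20$ in the lemma comfortably absorbs the bisection overhead together with the local comparison constants.
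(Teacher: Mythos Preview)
Your bisection--telescoping scheme is formally correct: the cone triangle inequality gives $|\phi(\tilde x)\phi(\tilde y)|\le|\phi(\tilde x)\phi(\tilde b)|+|\phi(\tilde b)\phi(\tilde y)|$, and together with $|\tilde x\tilde b|+|\tilde b\tilde y|=s+O(s^3)$ this yields $f(s)\le f(s/2+O(s^3))\,(1+O(s^2))$, whose iterated product converges. But this chain is vacuous unless $f$ is finite to begin with, and it only produces $f(s)\le C\cdot\limsup_{\epsilon\to0}f(\epsilon)$. Both the finiteness of $f$ and the identity $\limsup_{\epsilon\to0}f(\epsilon)=1$ are exactly what the lemma asserts; your reduction has pushed the entire content into the ``base case.''

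Your argument for that base case does not work. You write that $\Sigma_b$ is the gluing of two cone half-spheres along their equator ``via $d\phi|_{\tilde b}$,'' but at this stage $\phi$ is not known to be differentiable, or even continuous, so $d\phi$ is undefined; the equatorial identification in $\Sigma_b$ is induced by the metric of $X$, and relating it to the limiting ratio $|\phi(\tilde u)\phi(\tilde b)|/|\tilde u\tilde b|$ is itself a statement about regularity of $\phi$ that you have not proved. More seriously, the assertion that an Alexandrov gluing forces the boundary identification to be an isometry is the \emph{converse} of Petrunin's gluing theorem---this is precisely Conjecture~0.1 of the paper, not an available tool. And even granting a pointwise ``infinitesimal isometry'' at every $b\in L_p^2$, you would still need a uniformity argument to pass to $f(\epsilon)\to1$, which you do not supply.

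The paper's proof avoids this circularity by never assuming any regularity of $\phi$. Having fixed labels so that $[xy]=[x^-y^-]$ and set $\beta_0=\measuredangle x^-py^-$, it picks $u_0\in[px^+]$ with $|u_0x|=\epsilon$, draws $[u_0y]$, and records its first two crossings $a_1,b_1$ with $L_p(X)$; it then restarts from $u_1\in[pa_1^+]$, and so on. Two explicit cosine-law estimates drive the argument: from the local-geodesic property at each crossing (Lemma~\ref{invol}) one gets $|u_ia_{i+1}^+|>\epsilon/(4\sin(\beta_0/2))$, which forces the total number $N$ of steps to satisfy $(N+1)\epsilon<5R\beta_0^2$; and summing the step inequalities $\epsilon+2R\sin(\beta_i/2)\ge2(R-\epsilon)\sin(\alpha_{i+1}/2)+2R\sin(\beta_{i+1}/2)$ and letting $\epsilon\to0$ gives $\measuredangle x^+py^+\le\beta_0+10\beta_0^2$. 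This is the quantitative geometric input that your bisection scheme lacks.
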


%In the proof of Lemma \ref{non-cross}, we will frequently use the following two lemmas.

%\begin{lem}\label{non-cross}
%  Assume $R<\frac{2\pi}{\sqrt\kappa}$ if $\kappa>0$. Then for any $x,y\in L_p(X)$,
%  $\geod{xy}\cap L_p(X)=\{x,y\}$.
%\end{lem}

\begin{proof} %[{\bf Proof of Lemma \ref{non-cross}}]
  For simplicity, we give a proof for the case $\kappa=0$. The other cases can be carried out similarly.
  Throughout the proof, we will frequently use Lemma \ref{invol}, \ref{cone.angle} and Corollary \ref{double-cross} without mentioning. We will also assume that for any $a,b\in X$, $\geod{ab}\cap L_p(X)$ is finite if it is not empty.

  Clearly, $\phi$ preserves the distance when $x$ and $y$ are both in $L_p^1(X)$. Let $x\in L_p^2(X)$, $y\in L_p(X)$ (if $y\in L_p^1(X)$, $\tilde y^+=\tilde y^-$ will denote the same point and the argument will still go through). Because $\geod{xy}\cap L_p(X)$ is finite, not losing generality,
  assume $\geod{xy}=\geod{x^-y^-}$. Thus $\measuredangle x^-py^-\le \measuredangle x^+py^+$.
  Let $\beta_0=\measuredangle x^-py^-$. Since $|x^-y^-|=2R\sin\frac{\beta_0}2$ and $|x^+y^+|=2R\sin\frac{\measuredangle x^+py^+}2$, it's sufficient to show that
\begin{equation} \label{almost.eq}
10\beta_0^2+\beta_0\ge \measuredangle x^+py^+.
\end{equation}

\begin{center}
\begin{picture}(150,170)
\put(58, 0){Figure 3}
\put(75,150){\circle*{1}}
%\drawline(20,50)(75,150)
%\drawline(130,50)(75,150)
    \qbezier(	20.0000	,	70.0000	)(	20.0000	,	55.5022	)(	36.1091	,	 45.2511	 )
\qbezier(	36.1091	,	45.2511	)(	52.2182	,	35.0000	)(	75.0000	,	35.0000	 )
\qbezier(	75.0000	,	35.0000	)(	97.7818	,	35.0000	)(	113.8909	,	 45.2511	 )
\qbezier(	113.8909	,	45.2511	)(	130.0000	,	55.5022	)(	130.0000	,	 70.0000	 )

%\put(22.5,60){\circle*{1}} %x
\put(40,43.5){\circle*{1}} %q
\put(127.5,60){\circle*{1}} %y
\dashline{2}(40,43.5)(127.5,60) %qy
%\drawline(22.5,60)(40,43.5) %xq
\qbezier(75,150)(40,95)(40,43.5)%pq^+
\qbezier[30](75,150)(65,85)(40,43.5)%pq^-
\qbezier[30](75,150)(120,95)(127.5,60)%py
\put(54,38){\circle*{1}} %a_1
\put(113,45){\circle*{1}} %b_1
\put(40.5,53){\circle*{1}} %u_0
\drawline(40.5,53)(54,38) %u_0a_1
\dashline{2}(54,38)(113,45) %a_1b_1
\drawline(113,45)(127.5,60) %b_1y
\qbezier(75,150)(52,85)(54,38)%pa_1^+
\qbezier[30](75,150)(72,85)(54,38)%pa_1^-
\qbezier[30](75,150)(108,95)(113,45)%pb_1^-
\put(54,48.5){\circle*{1}} %u^+
\put(70,35.5){\circle*{1}} %a_2
\drawline(54,48.5)(70,35.5) %u^+a_2^+

\put(73,160){$p$}
%\put(15,51){$x$}
\put(35,34){$x$}
\put(29,58){$u_0$}
\put(50,25){$a_1$}
\put(110,30){$b_1$}
\put(126,48){$y$}
\put(58,51){$u_1$}
\put(68,23){$a_2$}

\put(66,130){\arc{10}{1.4}{2.6}}
\put(77,140){\arc{15}{0.6}{2}}
\put(48,122){$\alpha_1$}
\put(75.5,122){$\beta_1$}
\put(43,65){$\scriptstyle [+]$}
\put(76,85){$\scriptstyle [-]$}
\end{picture}
\end{center}

  Take $u_0\in \geod{px^+}$ with $|u_0x^+|=\epsilon$. Let $\geod{u_0y}$ be a geodesic. If $\geod{(u_0y)}\cap L_p(X)\neq\varnothing$, let $a_1(\neq y)$ and $b_1$ ($b_1$ can be $y$) be the first and second intersection points in $\geod{u_0y}\cap L_p(X)$ along the direction $\drn_{u_0}^y$ (see Figure 3). Assign $\pm$ to $\exp_p^{-1}(a_1), \exp_p^{-1}(b_1)$ such that $\measuredangle pa_1^+u_0<\frac\pi2$. Let $\alpha_1=\measuredangle x^+pa_1^+$ and $\beta_1=\measuredangle a_1^-pb_1^-$.
  In the case of $\geod{(u_0y)}\cap L_p(X)=\varnothing$, we take $b_1=a_1=y$ and $\beta_1=0$.

  Because $\geod{u_0a_1^+}*\geod{a_1^-b_1^-}*\geod{b_1^+y}$ is a minimal geodesic, by triangle inequality,
$$|u_0x|+|xy|\ge|u_0a_1^+|+|a_1^-b_1^-|+|b_1y|.$$
This implies
\begin{equation}
\epsilon+2R\sin\frac{\beta_0}2
\ge |u_0a_1^+|+2R\sin\frac{\beta_1}2.
\end{equation}

  Applying the cosine law (the form in Lemma \ref{t4.1.9} (5)) in $\triangle pu_0a_1$ with the angle $\measuredangle u_0pa_1^+=\alpha_1$, we get that
$$|u_0a_1^+|=\sqrt{\epsilon^2+4R(R-\epsilon)\sin^2\frac{\alpha_1}2}
\ge 2(R-\epsilon)\sin\frac{\alpha_1}2.$$
Thus
\begin{equation}
\epsilon+2R\sin\frac{\beta_0}2
\ge 2(R-\epsilon)\sin\frac{\alpha_1}2
+2R\sin\frac{\beta_1}2.
\end{equation}

%By a similar triangle inequality
%$|u_1a_1|+|a_1y|\ge|u_1a_2^+|+|a_2^-b_2^-|+|b_2b_1|$, we get
%\begin{equation}
%  \epsilon+2R\sin\frac{\beta_1}2
%  \ge |u_1a_2^+|
%  +2R\sin\frac{\beta_2}2.
%\end{equation}
%and also
%\begin{equation}
%  \epsilon+2R\sin\frac{\beta_1}2
%  \ge 2(R-\epsilon)\sin\frac{\alpha_2}2
%  +2R\sin\frac{\beta_2}2.
%\end{equation}

  If $\geod{(u_0y)}\cap L_p(X)=\varnothing$, we stop here. If $\geod{(u_0y)}\cap L_p(X)\neq\varnothing$, we proceed with $u_1\in\geod{pa^+_1}$ and $|u_1a_1|=\epsilon$. Let $\geod{u_1b_1}$ be a geodesic. Again, if $\geod{(u_1b_1)}\cap L_p(X)\neq\varnothing$, Let $a_2(\neq y)$ and $b_2$ (can be $b_1$) be the first and second intersection points in $\geod{u_1b_1}\cap L_p(X)$ along the direction $\drn_{u_1}^{b_1}$. Assign $\pm$ to $\exp_p^{-1}(a_2), \exp_p^{-1}(b_2)$ such that $\measuredangle pa_2^+u_1<\frac\pi2$. Let $\alpha_2=\measuredangle a_1^+pa_2^+$ and
  $\beta_2=\measuredangle a_2^-pb_2^-$. If $\geod{(u_1b_1)}\cap L_p(X)=\varnothing$, then $a_2=b_2=b_1$, $\beta_2=0$ and we stop the process. Proceed inductively until $\geod{(u_Nb_N)}\cap L_p(X)=\varnothing$, which yields that $a_{N+1}=b_{N+1}=b_N$ and $\beta_{N+1}=0$. We claim that $N$ is finite, and moreover,
\begin{equation} \label{non-cross.eN}
(N+1)\epsilon< 5R\cdot\beta_0^2.
\end{equation}
For each $0\le i\le N$, we have
\begin{equation} \label{e.iso.induct.length}
\epsilon+2R\sin\frac{\beta_i}2
\ge |u_ia_{i+1}^+|+2R\sin\frac{\beta_{i+1}}2,
\end{equation}
and
\begin{equation} \label{e.iso.induct.angle}
\epsilon+2R\sin\frac{\beta_i}2
\ge 2(R-\epsilon)\sin\frac{\alpha_{i+1}}2
+2R\sin\frac{\beta_{i+1}}2,
\end{equation}
  where $\alpha_i=\measuredangle a_i^+pa_{i+1}^+$, $\beta_i=\measuredangle a_i^-pb_i^-$. Summing up (\ref{e.iso.induct.angle}) for $i=0,1,\cdots,N$ and applying (\ref{non-cross.eN}), we get
\begin{align*}
5R\cdot\beta_0^2+2R\sin\frac{\beta_0}2
&\ge
(N+1)\epsilon+2R\sin\frac{\beta_0}2
\\
&\ge 2(R-\epsilon)\sum_{i=1}^N\sin\frac{\alpha_i}2
\ge 2(R-\epsilon)\sin\frac{\sum_{i=1}^N\alpha_i}2
\\
&\ge 2(R-\epsilon)\sin\frac{\measuredangle x^+pb_N}2.
\end{align*}
Since $b_N\to b_1\to y^+$ when taking $\epsilon\to0$, (\ref{almost.eq}) follows.

  It remains to show (\ref{non-cross.eN}). A sum of (\ref{e.iso.induct.length}) for $i=0,1,\cdots,N$ indicates that the upper bound of $N$ relies on an estimate of $|u_ia^+_{i+1}|$ in terms of $\epsilon$ and $\beta_{i+1}$. Note that $a_{i+1}=\geod{u_ib_{i+1}}\cap\left(\geod{pa^+_{i+1}}*\geod{pa^-_{i+1}}\right)$ and $\geod{pa^+_{i+1}}*\geod{pa^-_{i+1}}$ is a local geodesic at $a_{i+1}$, we have $\measuredangle pa^+_{i+1}u_i
  =\measuredangle pa^-_{i+1}b_{i+1}=\frac\pi2-\frac{\beta_{i+1}}2$. Applying the cosine law in triangle
$\triangle pu_ia_{i+1}^+$, we get
$$(R-\epsilon)^2=R^2+|u_ia_{i+1}^+|^2-2R|u_ia_{i+1}^+|\sin\frac{\beta_{i+1}}2,
$$
i.e.
$$|u_ia_{i+1}^+|^2-2R\sin\frac{\beta_i}2\cdot |u_ia_{i+1}^+|+R\epsilon-\epsilon^2=0.
$$
  Solving for $|u_ia_{i+1}^+|$ and taking in account that $\epsilon>0$ is small, we have
\begin{align*}
|u_ia_{i+1}^+|
&\ge R\sin\frac{\beta_{i+1}}2
-\sqrt{\left(R\sin\frac{\beta_{i+1}}2\right)^2-(R\epsilon-\epsilon^2)}
>\frac\epsilon{4\sin\frac{\beta_{i+1}}{2}}.
\end{align*}
  Note that $\beta_i$ is decreasing, which is implied by (\ref{e.iso.induct.length}) and $|u_ia_{i+1}^+|>|u_ia_i^+|=\epsilon$. We get
\begin{equation}
|u_ia_{i+1}^+|>\frac\epsilon{4\sin\frac{\beta_0}{2}}.
\label{non-cross.eq26}
\end{equation}
Plugging (\ref{non-cross.eq26}) into (\ref{e.iso.induct.length}), we get
\begin{equation}\label{non-cross.eq27}
\epsilon+2R\sin\frac{\beta_i}2
>\frac\epsilon{4\sin\frac{\beta_0}{2}}+2R\sin\frac{\beta_{i+1}}2.
\end{equation}
Summing up (\ref{non-cross.eq27}) for $i=0,1,\cdots,N$, we get
$$(N+1)\epsilon+2R\sin\frac{\beta_0}2
> (N+1)\frac{\epsilon}{4\sin\frac{\beta_0}{2}}.
$$
Therefore
  $$(N+1)\epsilon< \frac{8R\sin^2\frac{\beta_0}2}{1-4\sin\frac{\beta_0}2}< 5R\cdot\beta_0^2.$$
\end{proof}

%We first give a proof of Theorem \ref{iso.invol} by assuming Lemma \ref{non-cross}.

\begin{proof}[{\bf Proof of Theorem \ref{iso.invol}} (Assuming $R<\frac{\pi}{2\sqrt\kappa}$ when $\kappa>0$)] By Lemma \ref{non-cross}, $\phi$ is a continuous
involution and thus a homeomorphism. It reduces to show that $\phi: \Sigma\times \{R\}\to
\Sigma\times\{R\}$ preserves length of any curve $c: [0,1]\to \Sigma\times \{R\}$.
Given $\delta, \epsilon>0$, we may assume a partition $P: 0=t_0<t_1<\cdots <t_N=1$
with $|c(t_i)c(t_{i+1})|\le \delta$ such that the length of the curves satisfy
$$L(c)<\sum_{i=0}^{N-1}|c(t_i)c(t_{i+1})|+\frac \epsilon 2,\qquad L(\phi(c))<\sum_{i=0}^{N-1}|\phi(c(t_i))
\phi(c(t_{i+1}))|+\frac \epsilon2.$$
Then
\begin{align*}
|L(c)-L(\phi(c))|&\le \sum_{i=0}^{N-1}||c(t_i)c(t_{i+1})|-|\phi(c(t_i))
\phi(c(t_{i+1}))||+\epsilon\\&\le \sum_{i=0}^{N-1}20|c(t_i)c(t_{i+1})|^2+\epsilon\\
&\le 20\delta\cdot \sum_{i=0}^{N-1}|c(t_i)c(t_{i+1})|+\epsilon\\&\le
20\delta\cdot L(c)+\epsilon.
\end{align*}
Since $\epsilon>0, \delta>0$ can be chosen arbitrarily small, we conclude the desired result.
\end{proof}

\begin{proof}[\bf Completion of Proof of Theorem B] By Theorems \ref{open.iso} and \ref{iso.invol}, we identify $X$ with $\bar C_{\kappa}^R(\Sigma_p)/x\sim\phi(x)$. We shall show that the metric on $X$ coincides with the metric induced from the identification $x\sim\phi(x)$. It's equivalent to show that $\exp_p:\bar C_{\kappa}^R(\Sigma_p)\to X$ preserves lengths of geodesics. Let $\gamma\subset \bar C_{\kappa}^R(\Sigma_p)$ be a geodesic and $\sigma=f(\gamma)$. Since $L(\gamma)\ge L(\sigma)$, it remains to show that $L(\sigma)\ge L(\gamma)$. Because either $\gamma\subset\Sigma\times\{R\}$ or $\gamma\cap(\Sigma\times\{R\})$ has at most 2 points, we only need to check for the case $\gamma\subset\Sigma\times\{R\}$ i.e., $\sigma\subset L_p(X)$. For any $\epsilon>0$, let $\{x_i\}_{i=0}^{2N+1}\subset\sigma$ be an $\epsilon$-partition and
$$L(\sigma)=\lim_{\epsilon\to 0}\sum_{i=0}^{2N}|x_ix_{i+1}|.$$
Let $a_i\in\gamma$ so that $\exp_p(a_i)=x_i$. Choose $b_{2k}\in C_{\kappa}^R(\Sigma)$, $k=0,1,\cdots,N$, with $|a_{2k}-b_{2k}|<\epsilon^4$. Let $b_{2k+1}=a_{2k+1}$ for $k=0,1,\cdots,N$ and $y_i=\exp_p(b_i)$ for $i=0,1,\cdots 2N+1$. Then $|y_i-x_i|\le|b_i-a_i|<\epsilon^4$ and thus
$$L(\sigma)=\lim_{\epsilon\to 0}\sum_{i=0}^{2N}|y_iy_{i+1}|.$$
We claim that $\geod{y_iy_{i+1}}\cap L_p(X)$ is either $y_i$ or $y_{i+1}$. By Corollary \ref{stay.bdy}, let $u,v\in\geod{y_iy_{i+1}}\cap L_p(X)$ and there is no crossing point in between. Not losing generality, assume $y_i\notin L_p(X)$ and $|y_iu|<|y_iv|$. Let $\geod{u^-v^-}\subset\geod{y_iy_{i+1}}$. Because the involution $\phi$ is an isometry (Theorem \ref{iso.invol}), $L(\geod{u^+v^+})=L(\geod{u^-v^-})$. Thus $\geod{y_iu}\cup \geod{u^+v^+}\neq \geod{y_iu}\cup \geod{u^-v^-}$ is also a geodesic, which yields a bifurcation of geodesics.

By the claimed property, we have that $|y_iy_{i+1}|=|b_ib_{i+1}|$. Since $\sum_{i=0}^{2N}|b_ib_{i+1}|
\ge L(\gamma)$, we have
\begin{align*}
L(\sigma)=\lim_{\epsilon\to 0}\sum_{i=0}^{2N}|y_iy_{i+1}|
=\lim_{\epsilon\to 0}\sum_{i=0}^{2N}|b_ib_{i+1}|
\ge L(\gamma).
\end{align*}

It remains to show that for $\Sigma\in \text{Alex}^{n-1}(1)$, if $\phi: \Sigma\times
\{R\}\to \Sigma\times \{R\}$ is an isometric involution, then $X=\bar C^R_\kappa
(\Sigma)/(x\sim \phi(x))\in \Alexnk$.

%\vskip2mm

\noindent Case 1. Assume $\partial \Sigma=\varnothing$. Take two copies of
$\bar C_{\kappa}^R(\Sigma)$, marked as $\bar C_{\kappa}^R(\Sigma)_1$ and $\bar
C_{\kappa}^R(\Sigma)_2$, whose vertices are $p_1$ and $p_2$
respectively. Gluing along their boundaries by $\phi$, we obtain a double
space $\widehat{X}=\bar C_{\kappa}^R(\Sigma)_1\cup_\phi\bar C_{\kappa}^R(\Sigma)_2$.
By the gluing theorem ([Petrunin 97]), $\widehat{X}\in \text{Alex}^n(\kappa)$.

Now we extend the isometric $\mathbb Z_2$-action by $\phi$ on $\Sigma$
to an isometric $\hat{\mathbb Z_2}$-action on $\widehat X$ such that
$X=\widehat X/\hat{\mathbb Z_2}$, and thus $X\in\Alexnk$. For any $u\in
\bar C_\kappa^R(\Sigma)_1$, extend the geodesic $\geod{p_1u}_{\bar C_{\kappa}^R
(\Sigma)_1}$ to $u_1\in (\Sigma\times\{R\})_1$. Let $\hat \phi(u)$ be
the point on the geodesic $\geod{p_2\phi(u_1)}_{\bar C_{\kappa}^R
(\Sigma)_2}$ such that $|p_2\hat \phi(u)|=|p_1u|$ (so $\hat \phi:
\bar C^R_\kappa(\Sigma)_1\to \bar C^R_\kappa(\Sigma)_2$).
Switching the role of $\bar C_\kappa^R(\Sigma)_1$ and
$\bar C_\kappa^R(\Sigma)_2$, we extend $\phi$ to an isometric
involution $\hat \phi: \bar C^R_\kappa(\Sigma)_2\to \bar C^R_\kappa(\Sigma)_1$.
Clearly, $\hat \phi: \hat X\to \widehat X$ is an isometric involution such that
$X=\widehat {X}/\hat \phi$ .

%\vskip2mm

\noindent Case 2. Assume $\partial \Sigma\neq\varnothing$. Let $\hat
\Sigma=\Sigma^+\cup \Sigma^-$ denote the double of $\Sigma$. We first
extend the isometric involution $\phi$ on $\Sigma$ to $\hat \phi:
\hat \Sigma\to \hat \Sigma$ by $\hat \phi(x_{\pm})=\phi(x)_{\mp}$, where
$x_+=x_-\in \Sigma$. We then define another isometric involution $\psi: \hat
\Sigma\to \hat \Sigma$ by the reflation on $\partial \Sigma$,
$\psi(x_{\pm})=x_{\mp}$. Then $\hat \psi(\hat \phi(x_{\pm}))=\hat\psi(\phi(x)_{\pm})
=\phi(x)_{\mp}=\hat \phi(x_{\mp})=\hat \phi(\hat \psi(x_{\pm}))$. This implies that
  $\hat \Sigma$ admits an $\Bbb Z_2\oplus \Bbb Z_2$-action. Clearly, the $\Bbb Z_2\oplus
  \Bbb Z_2$-action extends uniquely to an isometric $\Bbb Z_2\oplus \Bbb Z_2$-action on
$\bar C^r_\kappa(\hat \Sigma)$. By Case 1, we extends only the $\hat \phi$-action
to $\widehat X$ such that $\bar C^r_\kappa(\hat \Sigma)/x\sim \hat \phi(x)\in
\text{Alex}^n(\kappa)$. Then
  $X=[\bar C^r_\kappa(\hat \Sigma)/x\sim \hat \phi(x)]/\hat \psi\in \text{Alex}^n(\kappa)$.
\end{proof}

By Theorem B, the isometric classification of $X\in \mathcal A^r_\kappa(\Sigma)$
with relatively maximum volume reduces to the isometric classification of
all $(n-1)$-dimensional Alexandrov spaces $\Sigma$ with curv $\ge 1$
and the equivariant isometric $\Bbb Z_2$-actions on $\Sigma$. For $n=2$,
one easily gets a complete list:

%\vskip2mm

\begin{cor} Any $2$-dimensional compact Alexandrov space with
curv $\ge \kappa$ and relatively maximum volume is isometric
to one of the following:
$$\bar C^r_\kappa(S^1_\theta)/\phi_i \hskip3mm (i=1,2,3),
\qquad \bar C^r_\kappa([0,\theta])/\psi_i \hskip3mm(i=1,2).$$
where $S^1_\theta$ denotes a circle of length $2\theta$
with $0<\theta\le \pi$, $\phi_i: S^1_\theta\to S^1_\theta$
(resp. $\psi_i: [0,\theta]\to [0,\theta]$) is trivial,
reflection or ancipital respectively for $i=1, 2$ and $3$
(resp. $i=1$ and $2$).
\end{cor}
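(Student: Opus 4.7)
My plan is to leverage Theorem B to reduce the classification to a small combinatorial problem in dimension one. Theorem B says that every $X \in \mathcal{A}^r_\kappa(\Sigma)$ with relatively maximum volume is isometric to $\bar{C}^r_\kappa(\Sigma)/x \sim \phi(x)$ for some isometric involution $\phi$ of $\Sigma \times \{r\} \cong \Sigma$, and conversely that every such quotient is a relatively maximum volume space. Moreover, if two involutions $\phi, \phi'$ on $\Sigma$ are conjugate by some $\sigma \in \text{Isom}(\Sigma)$, then $\sigma$ extends to a vertex-fixing isometry of $\bar{C}^r_\kappa(\Sigma)$ that carries the quotient by $\phi$ onto the quotient by $\phi'$. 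Thus for $n=2$ (so $\Sigma \in \text{Alex}^1(1)$), the proof reduces to two steps: (i) list all $\Sigma \in \text{Alex}^1(1)$, and (ii) for each such $\Sigma$, list the conjugacy classes of isometric involutions on it.

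For step (i), I will invoke the standard classification of $1$-dimensional Alexandrov spaces: any compact connected $1$-dimensional length space is homeomorphic to $S^1$ or to a closed interval, and the condition curv $\geq 1$ in dimension one reduces to the diameter bound $\text{diam}(\Sigma) \leq \pi$ (triangle comparison in $\mathbb{S}^2_1$ degenerates to this requirement). Hence $\Sigma$ is isometric either to a circle $S^1_\theta$ of total length $2\theta$ with $0 < \theta \leq \pi$, or to a closed interval $[0, \theta]$ with $0 < \theta \leq \pi$.

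For step (ii), on $S^1_\theta$ every isometry is a rotation or a reflection, so the involutions are the identity, the antipodal rotation by $\theta$ (fixed-point-free), and the reflections through a diameter (each with two antipodal fixed points). All reflections on $S^1_\theta$ are conjugate under rotation, so they form a single conjugacy class; together with the identity and the antipodal map this yields the three cases $\phi_1, \phi_2, \phi_3$. On $[0, \theta]$ every isometry must preserve the boundary pair $\{0, \theta\}$, so it is either the identity or the reflection $x \mapsto \theta - x$, corresponding to $\psi_1, \psi_2$.

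The argument is essentially bookkeeping once Theorem B is in hand; the only point requiring a brief check is the conjugation-invariance of the cone quotient, which is immediate since any isometry of $\Sigma$ lifts canonically to a vertex-fixing isometry of $\bar{C}^r_\kappa(\Sigma)$ preserving the $\kappa$-cone metric. I do not foresee a substantial obstacle: completeness of both the $1$-dimensional classification and of the involution enumeration can be verified by direct inspection.
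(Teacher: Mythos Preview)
Your proposal is correct and follows precisely the approach the paper indicates: the paper states (without further detail) that Theorem B reduces the classification to listing all $\Sigma\in\text{Alex}^{1}(1)$ together with their isometric $\Bbb Z_2$-actions, and that for $n=2$ ``one easily gets a complete list.'' Your write-up supplies exactly those omitted details---the classification of compact $1$-dimensional Alexandrov spaces with $\text{curv}\ge 1$ as $S^1_\theta$ or $[0,\theta]$ with $0<\theta\le\pi$, and the enumeration of their isometric involutions up to conjugacy---so there is nothing to add.
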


\begin{example}[One-to-one Self-Gluing]\label{eg1} This is an example for self-gluing (c.f. [GP]). Let $Z=\mathbb D^2$ be a 2-dimensional flat unit disk. Then $\partial Z=\mathbb S^1(1)$ is a unit circle. Let $\phi:\partial Z\to\partial Z$ be a one-to-one map and $X=\mathbb D^2/x\sim\phi(x)$ be the glued space via identification $z\sim\phi(z)$. By Theorem B, $X$ is an Alexandrov space if and only if $\phi$ is a reflection, antipodal map or identity, where $X$ is homeomorphic to $\mathbb S^2$, $\mathbb RP^2$ and $\mathbb D^2$ respectively.
\end{example}

\begin{example}[Three Points Glued in a Self-Gluing]\label{eg2} Let $Z$ be a triangle. We identify points on each side via a reflection about the mid point, i.e., i.e., $\geod{Ab}$ glued with $\geod{Cb}$, $\geod{Ac}$ glued with $\geod{Bc}$, $\geod{Ba}$ glued with $\geod{Ca}$ and $A$, $B$ and $C$ are glued to one point and $A$, $B$ and $C$ are glued to one point. The glued space $X$ is a tetrahedron, which belongs to $\Alex^2(0)$.

  \begin{center}
  \bigskip
  \begin{picture}(300,90)
    %\put(135,-5){Figure 1}
    \put(55,0){$Z$}
    \put(5,12){$A$}
    \put(66,85){$C$}
    \put(103,12){$B$}
    \put(90,51){$a$}
    \put(35,51){$b$}
    \put(60,12){$c$}
    \drawline(20,20)(100,20) %BC
    \drawline(20,20)(70,80) %BA
    \drawline(100,20)(70,80) %CA
    \put(60,20){\circle*{1}} %c
    \put(85,50){\circle*{1}} %b
    \put(45,50){\circle*{1}} %a
    \dashline{3}(60,20)(85,50) %cb
    \dashline{3}(85,50)(45,50) %ba
    \dashline{3}(45,50)(60,20) %ca

    \put(135,45){$\begin{CD}@>f>> \end{CD}$}

    \put(215,0){$X$}
    \put(222,85){$A\sim B\sim C$}
    \put(270,51){$a$}
    \put(215,51){$b$}
    \put(240,12){$c$}
    \put(240,20){\circle*{1}} %a
    \put(265,50){\circle*{1}} %b
    \put(225,50){\circle*{1}} %c
    \put(250,78){\circle*{1}} %A
    \drawline(240,20)(265,50) %ab
    \dashline{3}(265,50)(225,50) %bc
    \drawline(225,50)(240,20) %ca
    \drawline(240,20)(250,78) %aA
    \drawline(265,50)(250,78) %bA
    \drawline(225,50)(250,78) %cA
  \end{picture}
  \end{center}
\end{example}

\section{Relatively almost maximum volume}

In the proof of Theorem C, we need the following result.

\begin{thm}[{Theorem 5.5 in [Br]}]\label{br5.5}
Let $M$ be a $G$-manifold, $G$ is a finite group. Assume that for a given
prime $p$ and all $p$-subgroups $P\subseteq G$ satisfies that
$$H_i(M^P;\Bbb Z_p)=0,\qquad i\le q \text{ (including $P=\{e\})$}.$$
Then $H_i(M/G;\Bbb Z_p)=0$ for
all $i\le q$. Moreover, if this holds for all prime $p$ and $H_i(M;\Bbb Z)=0$ for
$i\le q$, then $H_i(M/G;\Bbb Z)=0$ for $i\le q$.
\end{thm}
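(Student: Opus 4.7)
The plan is to follow the classical Smith theory approach, structured as a multi-step reduction. The result reduces to understanding $\mathbb{Z}_p$-actions on manifolds (equivalently, actions of cyclic groups of prime order), and then climbing back up to arbitrary finite groups via Sylow theory and transfer.

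First, I would handle the reduction from a general finite group $G$ to a $p$-group. Let $P \subseteq G$ be a Sylow $p$-subgroup. The natural surjection $\pi: M/P \to M/G$ has finite fibers of size dividing $[G:P]$, and there is a transfer homomorphism $\tau: H_*(M/G;\mathbb{Z}_p) \to H_*(M/P;\mathbb{Z}_p)$ whose composition $\pi_* \circ \tau$ is multiplication by $[G:P]$. Since $[G:P]$ is coprime to $p$, this is an isomorphism on mod-$p$ homology, so $H_i(M/G;\mathbb{Z}_p)$ is a direct summand of $H_i(M/P;\mathbb{Z}_p)$. Thus it suffices to prove the theorem when $G=P$ is a $p$-group. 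Observe that the hypothesis is automatically inherited: the $p$-subgroups of $P$ are the subgroups of $P$, and for each the fixed-point set hypothesis is the same as assumed.

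Next, I would treat the $p$-group case by induction on $|P|$. The base case $|P|=p$, i.e.\ $P=\mathbb{Z}_p$, is the heart of Smith theory. Here one uses the Smith long exact sequences relating $H_*(M;\mathbb{Z}_p)$, $H_*(M^P;\mathbb{Z}_p)$, and certain ``Smith homology groups'' $H_*(M,\rho;\mathbb{Z}_p)$ built from $\sigma = 1-g$ and $\rho = 1+g+\cdots+g^{p-1}$ in the group ring. These identify $H_*(M/P;\mathbb{Z}_p)$ (relative to $M^P$) with the Smith groups, and combining the long exact sequences shows that if $H_i(M;\mathbb{Z}_p)=0$ and $H_i(M^P;\mathbb{Z}_p)=0$ for $i\le q$, then $H_i(M/P;\mathbb{Z}_p)=0$ for $i\le q$. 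For the inductive step, choose a central subgroup $C\cong \mathbb{Z}_p$ of $P$ and set $\bar P = P/C$. The space $M/C$ carries a $\bar P$-action with $M/P = (M/C)/\bar P$. The $p$-subgroups of $\bar P$ are of the form $H/C$ for $p$-subgroups $C\subseteq H\subseteq P$, and $(M/C)^{H/C}$ is a quotient of $M^H$ by $C$ (in fact one checks it equals $M^H/C$ using that $C\subseteq H$ fixes $M^H$ pointwise). Applying the base case to each of these $C$-actions on fixed sets gives the hypothesis for the $\bar P$-action on $M/C$; by induction $H_i(M/P;\mathbb{Z}_p)=0$ for $i\le q$, as desired.

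Finally, for the integer statement, I would apply the universal coefficient theorem: if $H_i(M/G;\mathbb{Z}_p)=0$ for every prime $p$ in a range, then $H_i(M/G;\mathbb{Z})\otimes \mathbb{Z}_p = 0$ and $\mathrm{Tor}(H_{i-1}(M/G;\mathbb{Z}),\mathbb{Z}_p)=0$ for all $p$. Combined with the hypothesis $H_i(M;\mathbb{Z})=0$ for $i\le q$, which together with a standard spectral sequence or transfer argument shows $H_i(M/G;\mathbb{Z})$ is finitely generated in this range (e.g.\ via $|G|\cdot H_i(M/G;\mathbb{Z})$ being a quotient of $H_i(M;\mathbb{Z})=0$, so it is torsion of bounded exponent dividing $|G|$), the vanishing mod every $p$ dividing $|G|$ forces $H_i(M/G;\mathbb{Z})=0$.

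The main obstacle is the inductive step through a central subgroup: one must verify precisely that the hypothesis on fixed-point sets of $p$-subgroups transfers from the $P$-action on $M$ to the $\bar P$-action on $M/C$, which requires carefully identifying $(M/C)^{H/C}$ with $M^H/C$ and invoking the $\mathbb{Z}_p$-case for the map $M^H \to M^H/C$. Once this identification is secured, the rest of the argument is formal bookkeeping with long exact sequences and transfer.
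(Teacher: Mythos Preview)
The paper does not prove this theorem: it is quoted from Bredon's book [Br] and used as a black box in the proof of Theorem~C. There is no argument in the paper to compare your proposal against.

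Your outline follows the standard Smith-theoretic route (and is essentially Bredon's own approach). The transfer reduction to a Sylow $p$-subgroup and the universal-coefficients/transfer argument for the integral statement are correct. However, there is a genuine gap in your inductive step: the identification $(M/C)^{H/C}=M^{H}/C$ is false in general. A $C$-orbit $[x]\in M/C$ is fixed by $H/C$ exactly when $H\cdot x=C\cdot x$. If $x\notin M^{C}$ this orbit has size $p$, so the stabilizer $H_{x}$ has index $p$ in $H$ with $C\cap H_{x}=\{e\}$; thus $x\in M^{H_{x}}$ for a complement $H_{x}$ to $C$ in $H$, not $x\in M^{H}$. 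Hence $(M/C)^{H/C}$ is the image in $M/C$ of $M^{H}$ together with the sets $M^{K}$ as $K$ ranges over complements of $C$ in $H$. The hypothesis of the theorem does cover all these $M^{K}$ (they are $p$-subgroups of $G$), so the induction can be repaired---for instance by applying the base $\Bbb Z_{p}$-case to the $C$-action on each $C$-invariant stratum and assembling, or by working directly with the Smith exact sequences as in Bredon---but this is genuinely more work than the sentence you wrote, and the justification you offered (``$C\subseteq H$ fixes $M^{H}$ pointwise'') only gives the easy inclusion $M^{H}\subseteq (M/C)^{H/C}$.
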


%\vskip2mm

\begin{proof}[{\bf Proof of Theorem C}]

We first show that if $X\in \mathcal A^r_\kappa(\Sigma)$ with $\text{vol}(X)=
v(\Sigma, \kappa,r)$, then $X$ is homeomorphic to $S^n$ or $\Bbb CP^n$.

By Theorem B, $X$ is isometric to $\bar C^R_\kappa(\Sigma))/x\sim \phi(x)$,
$\phi: \Sigma\to \Sigma$ is an isometric involution. To determine the
homeomorphism type of $X$, we consider the double space $\widehat X=\bar
C^R_\kappa(\Sigma))^+\cup_\phi \bar C^R_\kappa(\Sigma))^-$. As seen in
the proof of Theorem B, $\hat X\in \text{Alex}^n(\kappa)$
and $\phi$ extends an isometric $\Bbb Z_2$-action on $\hat X$ such
that $\hat X/\Bbb Z_2$.

We claim that $\hat X$ is a homeomorphism sphere. First, $\hat X$ is a
topological manifold if every point $\hat q\in \partial
\bar C^R_\kappa(\Sigma))\hookrightarrow \hat X$ is a manifold point.
According to [Wu], a point $x$ in an Alexandrov space is a manifold point if
and only if $\Sigma_x$ is simply connected. Because $\Sigma_{\hat q}$ is
a suspension of $\Sigma_{\hat q}(\Sigma)$, $\hat q$ is a manifold point.
By the Poincar\'e conjecture (in all dimensions), our claim reduces
to that $\hat X$ is an integral homotopy sphere. Because $\hat X$ is a suspension,
$\hat X$ is simply connected, and thus it suffices to show that
$\hat X$ is a homology sphere. Because $\bar C^R_\kappa(\Sigma)$ is
contractible, from Mayer-Vietoris
exact sequence of $(\bar C^R_\kappa(\Sigma))^+,\bar C^R_\kappa(\Sigma))^-)$
it is easy to see that $\hat X$ is an integral homology sphere.

If the $\Bbb Z_2$-action is free, then $X=\hat X/\Bbb Z_2$ is homeomorphic to
$\Bbb RP^n$. Otherwise, $X$ is a simply connected topological manifold
(the induced map, $\pi_1(\hat X)\to \pi_1(X)$ is an onto map).
Again, it suffices to show that $X$ is an integral homology sphere. By Smith
theorem, the $\Bbb Z_2$-fixed point set $\hat X^{\Bbb Z_2}$ is an
$\Bbb Z_2$-homology sphere. By now we can apply Theorem 4.1 to conclude
the claim.

We then prove Theorem C by contradiction; assuming a sequence $X_i\in
\mathcal A^r_\kappa(\Sigma)$ such that $\text{vol}(X_i)>\text{vol}(C^R_\kappa(\Sigma))
-\epsilon_i$ ($\epsilon_i=i^{-1}$), and none of $X_i$ is homeomorphic to
$S^n$ or $\Bbb RP^n$. Without loss of generality, we may assume that
$(X_i,p_i)\overset{d_{GH}}\longrightarrow (X,p)\in \text{Alex}^n(\kappa)$, where
$X_i=\bar B_r(p_i)$. By Perelman's stability theorem ([Ka2], [Pe]), $X_i$ is homeomorphic
to $X$ for $i$ large. In particular, $X$ is a topological manifold. We claim
that $X\in \mathcal A^r_\kappa(\Sigma_p)$ satisfies that $\text{vol}(X)=v
(\Sigma_p,\kappa,r)$. By the above, we then conclude that $X$ is homeomorphic
to $S^n$ or $\Bbb RP^n$, and thus $X_i$ is homeomorphic to $X$ for $i$ large,
a contradiction.

To see the claim,
$$\text{vol}(X)=\lim_{i\to \infty}\text{vol}(X_i)=\lim_{i\to \infty}
(\text{vol}(C^R_\kappa(\Sigma))-\epsilon_i)=\text{vol}(C^R_\kappa(\Sigma)).$$
On the other hand, we shall construct a distance non-increasing map, $\phi: \Sigma\to \Sigma_p$.
Consequently, $\text{vol}(\Sigma_p)\le \text{vol}(\Sigma)$ and thus $$\text{vol}(X)\le
\text{vol}(C^R_\kappa(\Sigma_p))\le \text{vol}(C^R_\kappa(\Sigma))\le \text{vol}(X).$$
Let $A=\{v_i\}\subset \Sigma$ be a countable dense subset, and let $f_i: (X_i,p_i)\to (X,p)$
be a sequence of $\epsilon_i$-Gromov-Hausdorff approximation, $\epsilon_i\to 0$.
For $v_1$, the sequence $\{f_i(g\exp_{p_i}v)\}\subset X$ contains a converging
subsequence $f_{i_1}(g\exp_{p_{i_1}}q(v))\to x_1\in X$. Then $[px_1]=w_1\in \Sigma_p$
(which may not be unique). We define $\phi(v_1)=w_1$. For $v_2$ and $\{f_{i_1}\}$,
repeating the above we obtain $w_2\in \Sigma_p$ and define $\phi(v_2)=w_2$.
Iterating this process, we define a map $\phi: A\to \Sigma_p$, $\phi(v_i)=w_i$.
It is easy to check that $\phi$ is distance non-increasing and thus $\phi$ extends uniquely
to distance non-increasing map from $\Sigma$ to $\Sigma_p$.
\end{proof}

\section{Pointed Bishop-Gromov relative volume comparison}

Assuming the monotonicity in Theorem D, the rigidity part follows by Lemma \ref{bs.rig} and
Theorem \ref{open.iso}. For $p\in X\in\Alexnk$, let $A_R^r(p)$ (or briefly $A_R^r$)
denote the annulus $\{x\in X: r<|px|\leq R\}$,
$0\leq r<R$, and let $A_R^r(\Sigma_p)$ (or briefly $\tilde A_R^r$) denote the
corresponding annulus in $C_{\kappa}(\Sigma_p)$. Let $B_r$ denote $A_r^0$,
$\tilde B_r$ denote $\tilde A_r^0$. Let's recall the following two lemmas from
[LR].

\begin{lem}[{[LR]} Lemma 2.1]\label{lr2.1}

Let $\Sigma\in \text{Alex}^{n-1}(1)$ and $0<r\le \frac \pi{\sqrt \kappa}$. Then
$$\vol{C_\kappa^r(\Sigma)}
=\vol{\Sigma}\cdot \int^r_0\text{sn}^{n-1}_\kappa (t)\,dt.
$$

\end{lem}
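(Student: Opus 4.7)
The plan is to disintegrate the Hausdorff $n$-measure of $C_\kappa^r(\Sigma)$ along the radial distance function from the vertex $\tilde o$, namely $\rho:C_\kappa(\Sigma)\to[0,\infty)$, $\rho(x,s)=s$. From the cosine-law definition of the $\kappa$-cone metric it is immediate that $\rho$ is $1$-Lipschitz with level set $L_s=\rho^{-1}(s)=\Sigma\times\{s\}$, and that the unique minimizing geodesic from $\tilde o$ to $(x,s)$ is the radial ray, so $|\nabla\rho|\equiv 1$ off the vertex.

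Next I would establish the key slice formula $\vol{L_s}=\snk^{\,n-1}(s)\cdot\vol{\Sigma}$. The intrinsic metric on $L_s$, viewed as a subspace of $C_\kappa(\Sigma)$, is exactly the $\snk(s)$-rescaling of $d_\Sigma$: applying the defining cosine law to two points $(x,s),(y,s)$ with $d_\Sigma(x,y)$ small and Taylor-expanding gives
\[
d_{C_\kappa}\bigl((x,s),(y,s)\bigr)=\snk(s)\cdot d_\Sigma(x,y)+O(d_\Sigma(x,y)^3),
\]
so the speed of any curve in $L_s$ is scaled by exactly $\snk(s)$, and hence the induced length metric on $L_s$ is $\snk(s)\cdot d_\Sigma$. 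Since Hausdorff $(n{-}1)$-measure rescales by the $(n{-}1)$-st power under global rescaling, the slice formula follows.

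Finally I would assemble the total volume by a Riemann-sum argument. Fix $\epsilon>0$, partition $[0,r]$ into intervals $[s_{i-1},s_i]$ of mesh $\le\epsilon$, and choose a Vitali-type cover of $\Sigma$ by small Borel sets $\{E_j\}$. On each piece $E_j\times[s_{i-1},s_i]$ the cosine-law metric depends smoothly on $s$, so this slab is $(1+\tau(\epsilon))$-bi-Lipschitz to the Euclidean-style prism whose base is the $\snk(s_i)$-rescaled copy of $E_j$ and whose height is $s_i-s_{i-1}$. Summing the slab volumes and using the slice formula gives
\[
\vol{C_\kappa^r(\Sigma)}=\lim_{\epsilon\to 0}\sum_{i}\snk^{\,n-1}(s_i)(s_i-s_{i-1})\cdot\vol{\Sigma}=\vol{\Sigma}\cdot\int_0^r\snk^{\,n-1}(t)\,dt.
\]

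The main obstacle is the quantitative \emph{almost-prism} claim in the last step: one must show that the slab $\rho^{-1}([s_{i-1},s_i])$ has volume within a $(1+\tau(\epsilon))$-factor of $\snk^{\,n-1}(s_i)\cdot\vol{\Sigma}\cdot(s_i-s_{i-1})$. This is essentially a co-area statement for the $1$-Lipschitz function $\rho$ on an Alexandrov space, and it can be handled either by invoking the co-area formula for Lipschitz functions on Alexandrov spaces used in [BBI], or by a direct estimate: the Taylor expansion above shows that the distortion between the cone metric on $\Sigma\times[s_{i-1},s_i]$ and the warped-product metric $\snk^2(s)\,d_\Sigma^2+ds^2$ is $O(\epsilon)$, and Hausdorff measures of metric spaces that are $(1+\tau(\epsilon))$-bi-Lipschitz agree up to the same factor, which is all one needs to conclude after passing to $\epsilon\to 0$.
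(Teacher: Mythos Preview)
The paper does not actually prove this lemma here; it is quoted from the companion preprint [LR] (see the bracketed attribution in the statement itself and the bibliography entry for [LR]). So there is no in-paper proof to compare your proposal against.

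Your argument is a correct and standard one: the slice formula $\vol{L_s}=\snk^{\,n-1}(s)\cdot\vol{\Sigma}$ follows from the fact that the induced length metric on $\Sigma\times\{s\}$ is the $\snk(s)$-rescaling of $d_\Sigma$, and the radial assembly is exactly the warped-product/co-area computation. Two small points are worth tightening. First, in the slab step you are implicitly using that the Hausdorff $n$-measure of the metric product $(\Sigma,\snk(s_i)\,d_\Sigma)\times [s_{i-1},s_i]$ equals $\snk^{\,n-1}(s_i)\cdot\vol{\Sigma}\cdot(s_i-s_{i-1})$; this Fubini-type identity for Hausdorff measure is not automatic on arbitrary metric spaces, so you should either restrict to $(n,\delta)$-strained points (full measure, and there the local bi-Lipschitz chart into $\Bbb R^n$ from [BGP] Theorem~9.4 makes the computation Euclidean) or cite the rectifiability of Alexandrov spaces. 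Second, the $(1+\tau(\epsilon))$-bi-Lipschitz comparison between the slab and the product need only be verified on small balls, again via the strained charts; your Taylor-expansion of the cosine law already gives exactly this. With these two remarks your outline is complete.
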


\begin{lem}[{[LR]} Theorem B]\label{lrthmB}
Let $U$ be an open subset in $X\in\Alexnk$. Then there is a constant
$c(n)$ depending only on $n$ such that
$$V_{r_n}(\bar U)=V_{r_n}(U)=c(n)\cdot
\text{Haus}_n(U)=c(n)\cdot \text{Haus}_n(\bar U),$$
where $V_{r_n}$ and $\text{Haus}_n$ represent the $n$-dimensional
rough volume and Hausdorff measure respectively.
\end{lem}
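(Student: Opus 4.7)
The plan is to prove the two main equalities separately: first, the proportionality $V_{r_n}(U)=c(n)\cdot\text{Haus}_n(U)$ with a dimensional constant $c(n)$, and second, the coincidence of each measure on $U$ and $\bar U$. The strategy relies on the local Euclidean structure of Alexandrov spaces at $(n,\delta)$-strained points.

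First I would calibrate the constant $c(n)$ by computing the ratio between rough volume and Hausdorff $n$-measure on a Euclidean ball (equivalently on any open domain in $\mathbb{R}^n$). This is a classical fact of geometric measure theory: both measures scale as $\epsilon^n$ under dilations, and standard packing/covering comparison estimates give $V_{r_n}=c(n)\cdot\text{Haus}_n$ on $\mathbb{R}^n$. This identifies $c(n)$ as the intrinsic dimensional constant comparing maximal $\epsilon$-separated packings with the Carath\'eodory $n$-measure.

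Next, to transfer the identity to $U$, I would use a Vitali-type covering argument. Fix $\delta>0$. By Lemma \ref{bgp10.6.1}, the $(n,\delta)$-strained subset $X^\delta\cap U$ is open and dense in $U$, and its complement has Hausdorff dimension at most $n-2$, so contributes nothing to either measure. Around each $(n,\delta)$-strained point $p$ with strainer radius $\rho(p)$, Lemma \ref{bgp9.4} provides an open neighborhood $B_{\delta\rho(p)}(p)$ that embeds $\tau(\delta)$-almost isometrically into $\mathbb{R}^n$; on this neighborhood both rough volume and Hausdorff measure agree with their Euclidean counterparts up to factors of $1+\tau(\delta)$, hence satisfy the proportionality with constant $c(n)$ up to the same error. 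Covering $X^\delta\cap U$ by a disjoint (or bounded-overlap) Vitali family of such neighborhoods and summing gives
\[
V_{r_n}(U)=(1+\tau(\delta))\cdot c(n)\cdot\text{Haus}_n(U),
\]
and letting $\delta\to 0$ yields the desired equality. The doubling property on Alexandrov spaces (from Bishop--Gromov comparison) guarantees that a Vitali family with the required overlap control exists.

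For the coincidence $V_{r_n}(\bar U)=V_{r_n}(U)$ and similarly for $\text{Haus}_n$, observe that $\bar U\setminus U\subset\partial U$ is closed and has empty interior. Applying Lemma \ref{bgp10.6.1} and the almost-Euclidean embedding locally shows that a closed nowhere dense subset of $X^\delta$ has $\text{Haus}_n$-measure zero, and the non-strained part again has dimension $\leq n-2$; hence $\text{Haus}_n(\partial U)=0$, and by the proportionality established above, $V_{r_n}(\partial U)=0$ as well.

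The main obstacle I expect is the Vitali covering step: while doubling suffices in abstract metric measure spaces, here one needs the strainer radius $\rho(p)$ (and hence the admissible ball size) to vary lower-semicontinuously in a way compatible with the covering, and to ensure that the constant $c(n)$ obtained is intrinsic rather than depending on the chosen strainer or the particular Euclidean embedding. Establishing this intrinsic independence rigorously — together with making the dimensional estimate $\dim_H(X\setminus X^\delta)\leq n-2$ uniform enough to be discarded in the limit — is the most delicate technical point.
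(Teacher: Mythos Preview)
The paper does not prove this statement: it is quoted verbatim as Theorem~B from the companion reference~[LR], so there is no ``paper's own proof'' to compare against. Your outline for the proportionality $V_{r_n}(U)=c(n)\cdot\text{Haus}_n(U)$ via almost-isometric strainer charts and a Vitali-type covering is a reasonable sketch of how such results are typically established, and is consistent with how the statement is used later (Lemma~\ref{t4.1.6} only needs the proportionality on open annuli).

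However, your argument for $\text{Haus}_n(\bar U)=\text{Haus}_n(U)$ contains a genuine error. You claim that ``a closed nowhere dense subset of $X^\delta$ has $\text{Haus}_n$-measure zero,'' invoking the local almost-Euclidean structure. This is false: already in $\mathbb{R}^n$ there are closed nowhere dense sets of positive Lebesgue measure (fat Cantor sets), and a bi-Lipschitz chart preserves this. Having empty interior says nothing about $n$-dimensional measure. Consequently, for a general open $U$ the boundary $\partial U$ can have positive $\text{Haus}_n$-measure, and the equality $\text{Haus}_n(\bar U)=\text{Haus}_n(U)$ cannot be obtained by the reasoning you give. Either the statement in [LR] carries an implicit hypothesis (e.g.\ $U$ a metric ball or annulus, which is all that is needed here), or its proof uses something specific to that class of sets rather than a generic nowhere-dense argument. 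You should revisit that step.
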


\begin{lem}\label{bs.rig}
  If the monotonicity in Theorem B holds. then $$\frac{\text{vol\,}(B_r)}{\text{vol\,}(\tilde B_r)}
=\frac{\text{vol\,}(B_R)}{\text{vol\,}(\tilde B_R)}
$$
for some $0<r<R$ ($R\le\frac\pi{\sqrt\kappa}$ for $\kappa>0$) if
and only if $\text{vol\,}(B_R)=\text{vol\,}(\tilde B_R)$.
\end{lem}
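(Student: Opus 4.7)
The plan is to combine both equivalent forms of Theorem D's monotonicity with the infinitesimal limit at $p$. Set $F(s)=\vol{B_s(p)}$, $G(s)=\vol{\tilde B_s}$, and $h(s)=F(s)/G(s)$; the ``in particular'' inequality of Theorem D says $h$ is non-increasing on $(0,R]$ and bounded above by $1$, while the second (equivalent) form is exactly the ring-by-ring inequality
\[
\eta(R_1,R_2):=\frac{F(R_2)-F(R_1)}{G(R_2)-G(R_1)}\ge\eta(R_2,R_3)
\qquad (0\le R_1<R_2<R_3\le R).
\]
The direction $(\Leftarrow)$ is immediate: $F(R)=G(R)$ gives $h(R)=1$, and then $1\ge h(r)\ge h(R)=1$ forces $h(r)=1=h(R)$.

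For $(\Rightarrow)$, set $c:=h(r)=h(R)$, so that $F(r)=cG(r)$ and $F(R)=cG(R)$. The main step is to upgrade these two identities to $F\equiv cG$ on all of $(0,R]$. On $[r,R]$ this follows from $h$ being non-increasing and pinched between two equal values. For $s\in(0,r)$, apply the ring inequality to $(R_1,R_2,R_3)=(s,r,R)$: its right-hand side $\eta(r,R)$ equals $c$ by the two identities at $r,R$, so $F(r)-F(s)\ge c(G(r)-G(s))$, and substituting $F(r)=cG(r)$ collapses this to $F(s)\le cG(s)$. The monotonicity of $h$ gives the reverse inequality $F(s)\ge cG(s)$, so $F(s)=cG(s)$ on $(0,r)$ as well.

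It remains to push $s\to 0^+$. By Lemma \ref{lr2.1}, $s^{-n}G(s)\to\vol{\Sigma_p}/n$; and by Gromov-Hausdorff convergence of the rescaled pointed spaces $(X,s^{-1}d,p)$ to the Euclidean tangent cone $(T_pX,\tilde o)$, combined with continuity of Hausdorff $n$-volume under GH-convergence of $n$-dimensional Alexandrov spaces, one gets $s^{-n}F(s)\to\vol{B_1(T_pX)}=\vol{\Sigma_p}/n$. Hence $c=\lim_{s\to 0^+}h(s)=1$, i.e.\ $F(R)=G(R)$, as required.

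The only ingredient beyond the two forms of Theorem D is the small-ball limit $h(s)\to 1$, which I expect to be the most delicate point to cite cleanly. If one prefers to avoid appealing to GH-continuity of Hausdorff measure directly, an alternative is to use Lemma \ref{lrthmB} to replace $\vol{\cdot}$ by the rough volume $V_{r_n}(\cdot)$, whose continuity under the blow-up is immediate from its definition.
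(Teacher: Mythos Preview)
Your proof is correct and follows essentially the same route as the paper's: both directions hinge on the monotonicity of $h(s)=\vol{B_s}/\vol{\tilde B_s}$, the annulus form of Theorem~D, and the infinitesimal limit $h(s)\to 1$ as $s\to 0^+$. The paper's $(\Rightarrow)$ argument also propagates the equality $h(r)=h(R)=c$ down to all $t<r$ via the annulus inequalities and then sends $t\to 0$ to force $c=1$; your organization is a bit more streamlined but the content is the same. One minor remark: the bound $h\le 1$ is not part of Theorem~D's monotonicity statement per se---it follows either from the Bishop-type inequality for $g\exp_p$ stated in the introduction, or (as you implicitly use anyway) from $h$ non-increasing together with $\lim_{s\to 0^+}h(s)=1$. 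The paper simply asserts the limit $\lim_{t\to 0}h(t)=1$ without comment; your explicit justification via blow-up and volume continuity (or via rough volume through Lemma~\ref{lrthmB}) is a welcome addition.
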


\begin{proof}

Assume $\text{vol\,}(B_R)=\text{vol\,}(\tilde B_R)$. The desired equation follows by the monotonicity:

\begin{align*}
1&=\frac{\vol{B_R}}{\text{vol\,}(\tilde B_R)}
\le \frac{\vol{B_r}}{\text{vol\,}(\tilde B_r)}
\le \lim_{r\ge t\to 0}\frac{\vol{B_t}}{\text{vol\,}(\tilde B_t)}=1.
\end{align*}
Assume
$\dsp\frac{\text{vol\,}(B_r)}{\text{vol\,}(\tilde B_r)}
=\frac{\text{vol\,}(B_R)}{\text{vol\,}(\tilde B_R)},$ for some
$0<r<R$. Then for any $t<r$,
\begin{align*}
\frac{\text{vol\,}(B_{t})}{\text{vol\,}(A_R^r)}
+\frac{\text{vol\,}(A_R^t)}{\text{vol\,}(A_R^r)}
&=\frac{\text{vol\,}(B_{R})}
{\text{vol\,}(A_R^r)}
=\frac{\text{vol\,}(\tilde B_R)}
{\text{vol\,}(\tilde A_R^r)}
=\frac{\text{vol\,}(\tilde B_t)}
{\text{vol\,}(\tilde A_R^r)}
+\frac{\text{vol\,}(\tilde A_R^t)}
{\text{vol\,}(\tilde A_R^r)}.
\end{align*}
By the monotonicity, we have $\dsp\frac{\text{vol\,}(A_R^t)}{\text{vol\,}(A_R^r)}
\geq \frac{\text{vol\,}(\tilde A_R^t)}
{\text{vol\,}(\tilde A_R^r)}$. Also,
\begin{align*}
\frac{\text{vol\,}(B_{t})}{\text{vol\,}(A_R^r)}
&=\frac{\text{vol\,}(B_{t})}{\text{vol\,}(A_r^t)}
\cdot \frac{\text{vol\,}(A_r^t)}{\text{vol\,}(A_R^r)}
\geq \frac{\text{vol\,}(\tilde B_t)}
{\text{vol\,}(\tilde A_r^t)}
\cdot \frac{\text{vol\,}(\tilde A_r^t)}
{\text{vol\,}(\tilde A_R^r)}
= \frac{\text{vol\,}(\tilde B_t)}
{\text{vol\,}(\tilde A_R^r)}.
\end{align*}
Consequently
$\dsp\frac{\text{vol\,}(B_{t})}{\text{vol\,}(A_R^r)}
= \frac{\text{vol\,}(\tilde B_t)}
{\text{vol\,}(\tilde A_R^r)}$, or equivalently,
$\dsp\frac{\text{vol\,}(B_{t})}{\text{vol\,}(\tilde B_t)}
= \frac{\text{vol\,}(A_R^r)}
{\text{vol\,}(\tilde A_R^r)}.$
Let $t\rightarrow 0$, we get
$\text{vol\,}(A_R^r)=\text{vol\,}(\tilde A_R^r)$. Thus
$$1\ge\frac{\text{vol\,}(B_R)}{\text{vol\,}(\tilde B_R)}
\ge \frac{\text{vol\,}(A_R^r)}{\text{vol\,}(\tilde A_R^r)}
=1.$$
\end{proof}

By now, it remains to show the monotonicity in Theorem D.
We take an elementary approach by expressing the monotonicity as a form of ``Riemann
sum" (see (\ref{e4.1.1.4})) and using the Toponogov triangle comparison to
bound each term in terms of the desired form (see Corollary \ref{t4.1.7}).
To achieve this goal, we choose a special infinite partition (see (\ref{e4.1.1.4}) and (\ref{e4.1.8})).

We start the proof of Theorem D by deriving an equivalent form of
the monotonicity. For $0\le R_1<R_2<R_3$ ($< \frac \pi{\sqrt \kappa}$
when when $\kappa>0$), and $p\in X$, by Lemma \ref{lr2.1}, the monotonicity has the following integral form
$$\frac {\text{vol\,}(A_{R_3}^{R_1})}{\text{vol\,}(A_{R_2}^{R_1})}\le
\frac{\dsize \int^{R_3}_{R_1}\snk^{n-1}(t)\,dt}
{\dsize\int^{R_2}_{R_1}\snk^{n-1}(t)\,dt},$$
which is equivalent to
\begin{equation}
I_1=\log \left[\frac {\text{vol\,}(A_{R_3}^{R_1})}{\text{vol\,}(A_{R_2}^{R_1})}
\right]\le \log \left[\frac{\dsize \int^{R_3}_{R_1}\snk^{n-1}(t)\,dt}
{\dsize\int^{R_2}_{R_1}\snk^{n-1}(t)\,dt}\right]=I_2.
\label{e4.1.1.1}\end{equation}
Fixing a small $\delta>0$, let $m=[\frac {R_3-R_2}\delta]+1$, $\Delta=
\frac{R_3-R_2}m\approx \delta$, and $r_j=R_2+j\cdot \Delta$, $0\le j\le m$. Then
$$A_{R_2}^{R_1}=A_{r_0}^{R_1}\subset A_{r_1}^{R_1}\subset \cdots \subset A_{r_m}^{R_1}
=A_{R_3}^{R_1}.$$
Using the Taylor expansion $\log \dfrac 1x= 1-x+O((1-x)^2)$, we may
rewrite the left hand side of (\ref{e4.1.1.1}) as:
\begin{align}
I_1&=\sum_{j=1}^m\log \frac{\text{vol\,}(A_{R_1}^{r_j})}
{\text{vol\,}(A_{R_1}^{r_{j-1}})}
=\sum_{j=1}^m\left [\left(1-\frac{\text{vol\,}(A_{R_1}^{r_{j-1}})}
{\text{vol\,}(A_{R_1}^{r_j})}\right)+O(\delta^2)\right]
\label{e4.1.1.2}\\
&=\sum_{j=1}^m\frac{\text{vol\,}(A_{r_{j-1}}^{r_j})}
{\text{vol\,}(A_{R_1}^{r_j})}+O(\delta).
\notag\end{align}
Let $\phi(r)=\dsize
\int^r_{R_1} \snk^{n-1}(t)\,dt$. Then the right hand side of (\ref{e4.1.1.1})
can be written as:
\begin{align}
I_2&=\log \frac {\phi(R_3)}{\phi(R_2)}=\int^{R_3}_{R_2}\frac {\phi'(t)}
{\phi(t)}\,dt
\label{e4.1.1.3}\\
&=\sum_{j=1}^m\frac{\phi'(r_j)}{\phi(r_j)}\delta+\tau(\delta)
\notag\\&=
\sum_{j=1}^m\frac {\delta\cdot\snk^{n-1}(r_j)}{\int^{r_j}_{R_1}
\snk^{n-1}(t)\,dt}+\tau(\delta).
\notag\end{align}
Comparing (\ref{e4.1.1.1}) to (\ref{e4.1.1.2}) and (\ref{e4.1.1.3}), it's sufficient to show
\begin{equation}
\frac{\text{vol\,}(A_{r_{j-1}}^{r_j})}
{\text{vol\,}(A_{R_1}^{r_j})}\le \frac {\delta\cdot\snk^{n-1}(r_j)}{\int^{r_j}_{R_1}
\snk^{n-1}(t)\,dt}.
\label{e4.1.1.3+1}\end{equation}
We further divide $A_{R_1}^{r_j}$ into thinner annulus: given a monotonic
sequence $\{a_i\}_{i=1}^\infty\subset[0,1]$ such that $a_j\to 0$.
Then $\{a_ir_j\}_{i=1}^\infty$ is an infinite partition for $[0,r_j]$,
and (\ref{e4.1.1.3+1}) is equivalent to
\begin{equation}
  \frac{\text{vol\,}(A_{R_1}^{r_j})}{\text{vol\,}(A_{r_{j-1}}^{r_j})} =\sum_{i=1}^\infty
\frac{\text{vol\,}(A^{a_{i+1}r_j}_{a_ir_j})}{\text{vol\,}(A_{r_{j-1}}^{r_j})}
\ge \frac {\int^{r_j}_{R_1}
\snk^{n-1}(t)\,dt}{\delta\cdot\snk^{n-1}(r_j)}.
\label{e4.1.1.4}\end{equation}

To show (\ref{e4.1.1.4}), we need to estimate $\frac{\text{vol\,}(A^{a_{i+1}r_j}_{a_ir_j})}
{\text{vol\,}(A_{r_{j-1}}^{r_j})}$ from below (see Corollary \ref{t4.1.7}).
Assume $\delta$ is so small that $R-\delta>0$ and $r-\lambda \delta>0$. Let $x\in A_{R-\delta}^R$. We define
a map, $\phi: A_{R-\delta}^R\to A_{r-\lambda \delta}^r$, where $f(x)$ is the point on a
minimal geodesic $\geod{px}$ (if not unique, we pick one of them) such that
$$|pf(x)|=r-\lambda(R-|px|).$$
Because a geodesic in $X$ does not branch, $\phi$ is well-defined and is injective.

In the proof of Theorem D, the following is a main technical lemma, which
asserts that $\phi$ behaves like a bi-Lipschitz function.

\begin{lem}\label{t4.1.5}

Let $\delta>0$ sufficiently small, $\lambda=\frac{\snk r}{\snk R}$
and $\phi: \tilde A_{R-\delta}^R\to \tilde A_{r-\lambda \delta}^r$ be defined as the above. Then
$$c(\kappa,\delta)\cdot \lambda \le \frac {\snk \frac {|\phi(x)\phi(y)|}2}
{\snk\frac{|xy|}2}\le c(\kappa,\delta)^{-1}\lambda,$$
where $c(\kappa,\delta)=\begin{cases} 1 & \kappa=0,\\ 1-\frac{2\delta}
{\snk R+\delta} & \kappa>0,\\ 1-\delta\cdot
\frac{\cosh _\kappa R}R & \kappa<0\end{cases}$.
\end{lem}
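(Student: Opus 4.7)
Because $\phi$ acts by rescaling each geodesic ray from the vertex $p$, it fixes the direction at $p$ and therefore preserves the angle there: $\angle\phi(x)p\phi(y)=\angle xpy=:\theta$. Writing $a=|px|$, $b=|py|$, $a'=|p\phi(x)|=r-\lambda(R-a)$, $b'=|p\phi(y)|=r-\lambda(R-b)$, one has $a,b\in[R-\delta,R]$, $a',b'\in[r-\lambda\delta,r]$, and crucially the identity $a'-b'=\lambda(a-b)$. Since $C_\kappa(\Sigma_p)$ is a $\kappa$-cone, the triangle $\triangle pxy$ obeys the cosine law of $\mathbb S^2_\kappa$ exactly; applying the half-angle identity $\snk^2(c/2)=\snk^2((a-b)/2)+\snk(a)\snk(b)\sin^2(\gamma/2)$ (which reduces to the usual cosine law via $\text{cs}_\kappa=1-2\kappa\snk^2(\cdot/2)$ and the double-angle relation for $\snk$) to the two triangles $\triangle pxy$ and $\triangle p\phi(x)\phi(y)$, one obtains the exact identities
\begin{align*}
\snk^2\bigl(|xy|/2\bigr)&=\snk^2\bigl((a-b)/2\bigr)+\snk(a)\snk(b)\sin^2(\theta/2),\\
\snk^2\bigl(|\phi(x)\phi(y)|/2\bigr)&=\snk^2\bigl(\lambda(a-b)/2\bigr)+\snk(a')\snk(b')\sin^2(\theta/2).
\end{align*}

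Since a ratio of sums of nonnegative quantities always lies between the smallest and largest coefficient ratios, the inequality asserted by the lemma (after squaring) will follow once one establishes the two separate bounds
\[
c(\kappa,\delta)^{2}\lambda^{2}\le\frac{\snk^{2}(\lambda t/2)}{\snk^{2}(t/2)}\le c(\kappa,\delta)^{-2}\lambda^{2}\quad (|t|\le\delta),
\]
\[
c(\kappa,\delta)^{2}\lambda^{2}\le\frac{\snk(a')\snk(b')}{\snk(a)\snk(b)}\le c(\kappa,\delta)^{-2}\lambda^{2}\quad (a,b\in[R-\delta,R]).
\]
The first inequality is an equality for $\kappa=0$; for $\kappa>0$ the concavity of $\snk$ on $[0,\pi/\sqrt\kappa]$ forces $\snk(\lambda t)\ge\lambda\snk(t)$, giving the lower bound, while the complementary upper bound on an interval of length $\le\delta/2$ follows from a single mean-value estimate. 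The case $\kappa<0$ is symmetric, with $\snk=\sinh(\sqrt{-\kappa}\,\cdot)/\sqrt{-\kappa}$ convex reversing the direction of the first estimate.

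For the second inequality, introduce the auxiliary function $h(a):=\snk(r-\lambda(R-a))/\snk(a)$. By the very definition $\lambda=\snk(r)/\snk(R)$ one has $h(R)=\lambda$ exactly, and a direct computation yields $h'(R)=\lambda\bigl(\snk'(r)-\snk'(R)\bigr)/\snk(R)$, whose sign coincides with $\mathrm{sgn}(\kappa)$ because $\snk'=\cos(\sqrt\kappa\,\cdot)$ is decreasing when $\kappa>0$ and $\snk'=\cosh(\sqrt{-\kappa}\,\cdot)$ is increasing when $\kappa<0$. Consequently $h(a)$ lies on one fixed side of $\lambda$ throughout $[R-\delta,R]$, giving one side of the desired bound on $h(a)h(b)$ for free; the opposite side is obtained from the mean-value theorem applied to $\snk$ on intervals of length $\le\delta$, and the explicit constant $c(\kappa,\delta)$ quoted in the statement is exactly what this MVT computation produces after clearing denominators.

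The main technical obstacle is purely computational bookkeeping: the three regimes $\kappa=0$, $\kappa>0$, $\kappa<0$ must be handled in parallel, and the resulting MVT-style estimates must be massaged into the uniform form of $c(\kappa,\delta)$ advertised in the statement. Particular care is required for $\kappa>0$ when $R$ is close to $\pi/\sqrt\kappa$ and $\snk R$ becomes small---the appearance of the combination $\snk R+\delta$ in the denominator of $c(\kappa,\delta)$ is precisely the device that keeps the bound uniform in that degenerating regime.
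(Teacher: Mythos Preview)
Your proposal is correct and follows essentially the same approach as the paper: both proofs apply the half-angle cosine law (stated as Lemma~\ref{t4.1.9}(5) in the paper) to reduce the problem to separately bounding the ratios $\snk^2(\lambda t/2)/\snk^2(t/2)$ (with $|t|\le\delta$) and $\snk(a')\snk(b')/(\snk(a)\snk(b))$, and then combine them via the elementary fact that a ratio of sums of nonnegative terms lies between the extreme term-by-term ratios. The paper carries out the individual estimates with the explicit elementary inequalities listed as Lemma~\ref{t4.1.9}(1)--(4) rather than your concavity/MVT phrasing, but the structure and the resulting constants are the same.
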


Because the proof of Lemma \ref{t4.1.5} is technical and somewhat tedious, we will delay it to the end of this section.

\begin{lem}\label{t4.1.6}

Let $U$ and $V$ be two open subsets of $X\in \text{Alex}^n(\kappa)$, and let $\phi:
V\to U$ be an injection. If $\phi$ satisfies that $\snk\frac
{|\phi(x)\phi(y)|}2\ge c\cdot \snk \frac{|xy|}2$ for any $x,y\in V$,
then $\text{vol\,}(U)\ge c^n\cdot \text{vol\,}(V)$,
where $c$ is a constant.
\end{lem}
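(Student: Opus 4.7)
The plan is to use the rough volume characterization of Hausdorff measure from Lemma~\ref{lrthmB}, combined with the observation that the hypothesis on $\phi$ together with injectivity forces $\phi$ to send an $\epsilon$-separated subset of $V$ to an $\epsilon'$-separated subset of $U$, where $\epsilon'/\epsilon\to c$ as $\epsilon\to 0$; counting then yields the desired volume inequality.

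Concretely, I would proceed as follows. Fix $\epsilon>0$ small enough that $c\,\snk(\epsilon/2)$ lies in the range of $\snk$ restricted to the principal branch (for $\kappa>0$, this requires $\epsilon$ below a threshold depending on $\kappa$; no restriction is needed if $\kappa\le 0$), and set
$$\epsilon':=2\snk^{-1}\!\left(c\,\snk(\epsilon/2)\right).$$
Choose a maximal $\epsilon$-separated subset $\{y_1,\ldots,y_N\}\subset V$, so $N=N_\epsilon(V)$ is the $\epsilon$-capacity of $V$. By the hypothesis and monotonicity of $\snk$ on the relevant interval,
$$\snk\!\left(\tfrac{|\phi(y_i)\phi(y_j)|}{2}\right)\;\ge\;c\,\snk\!\left(\tfrac{|y_iy_j|}{2}\right)\;\ge\;c\,\snk(\epsilon/2)\;=\;\snk(\epsilon'/2),$$
so $|\phi(y_i)\phi(y_j)|\ge\epsilon'$ for $i\ne j$. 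Since $\phi$ is injective, the $N$ points $\phi(y_1),\ldots,\phi(y_N)$ are distinct and form an $\epsilon'$-separated subset of $U$, giving $N_{\epsilon'}(U)\ge N$. Using the BGP rough-volume formula $V_r(\Omega)=\beta_n r^n N_r(\Omega)$, this translates to
$$V_{\epsilon'}(U)\;\ge\;\left(\frac{\epsilon'}{\epsilon}\right)^{n}V_\epsilon(V).$$

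To conclude, the Taylor expansion $\snk(t)=t+O(t^3)$ (uniform in $\kappa$ on compact intervals) gives $\epsilon'/\epsilon\to c$ as $\epsilon\to 0$, and by Lemma~\ref{lrthmB} we have $V_r(\Omega)=c(n)\cdot\vol{\Omega}$ for all sufficiently small $r$. Letting $\epsilon\to 0$ in the displayed inequality yields $\vol{U}\ge c^n\,\vol{V}$, as required.

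No substantive obstacle is anticipated: the argument is essentially a clean Lipschitz-type volume comparison phrased via capacities. The only delicate points are (i)~ensuring $\epsilon'$ is well defined, which is automatic for small $\epsilon$, and (ii)~checking that the normalization constants $\beta_n$ and $c(n)$ cancel consistently on both sides, which they do. The hypothesis is expressed in exactly the intrinsic ``chord'' form $\snk(\cdot/2)$ that linearizes at small scales to give the factor $c$, which is precisely what the volume comparison needs.
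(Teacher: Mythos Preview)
Your argument is correct and is essentially identical to the paper's: both use Lemma~\ref{lrthmB} to reduce to rough volume, observe that $\phi$ sends an $\epsilon$-separated set in $V$ to a $2\snk^{-1}(c\,\snk(\epsilon/2))$-separated set in $U$, and pass to the limit using $\epsilon'/\epsilon\to c$. The only cosmetic difference is that you phrase the intermediate inequality via capacities $N_\epsilon$ while the paper writes it directly in terms of $\epsilon^n\beta_V(\epsilon)$; note also that $V_r(\Omega)=c(n)\vol{\Omega}$ holds only in the limit $r\to 0$, not for each small $r$, but since you take the limit anyway this does not affect the argument.
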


\begin{proof} By Lemma \ref{lrthmB}, it suffices to prove for rough volume.
Recall that the $n$-dimensional rough volume of a subset $V$ is
$$V_{r_n}(V)=\lim_{\epsilon\to 0}\epsilon^n\cdot \beta_V(\epsilon),$$
where $\beta_V(\epsilon)$ denotes the number of points in an $\epsilon$-net $\{x_i\}$
on $V$.

By the assumption, $\{\phi(x_i)\}$
is a $2\snk^{-1}
\left(c\cdot\snk\frac{\epsilon}{2}\right)$-net in $U$.
We get
$$\beta_U\left(2\snk^{-1}
\left(c\cdot\snk\frac{\epsilon}{2}\right)\right)\geq
\beta_V(\epsilon),$$
or as the following form:
$$\frac{\epsilon^n}{\left(2\snk^{-1}
\left(c\cdot\snk\frac{\epsilon}{2}\right)\right)^n}
\cdot
\left(2\snk^{-1}
\left(c\cdot\snk\frac{\epsilon}{2}\right)\right)^n
\cdot\beta_U\left(2\snk^{-1}
\left(\snk\frac{\epsilon}{2}\right)\right)
\geq
\epsilon^n\beta_V(\epsilon).
$$
Let $\epsilon\rightarrow 0$, we get $\frac{1}{c^n}V_{r_n}(U)\geq
V_{r_n}(V)$.
\end{proof}

\begin{cor}\label{t4.1.7}

Let $p\in X\in\Alexnk$, $\delta>0$ small. Then
$$\frac{\text{vol\,}(A_{r-\lambda \delta}^r)}{\text{vol\,}(A_{R-\delta}^R)}
\ge (1-\tau(\delta))\cdot \left(\frac{\snk r}{\text{sn}
_\kappa R}\right)^n.$$
\end{cor}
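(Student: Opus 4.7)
The plan is to apply Lemma \ref{t4.1.6} to the radial shrinking map $\phi: A_{R-\delta}^R \to A_{r-\lambda\delta}^r$ defined above Lemma \ref{t4.1.5}. This map is well-defined up to the choice of a minimal geodesic $\geod{px}$ and is injective because geodesics in $X$ do not bifurcate. It therefore suffices to verify that for all $x, y \in A_{R-\delta}^R$,
\begin{equation*}
\snk\!\left(\tfrac{|\phi(x)\phi(y)|_X}{2}\right) \ge c(\kappa,\delta)\,\lambda\,\snk\!\left(\tfrac{|xy|_X}{2}\right),
\end{equation*}
since Lemma \ref{t4.1.6} then gives $\vol{A_{r-\lambda\delta}^r}\ge c(\kappa,\delta)^{n}\lambda^{n}\vol{A_{R-\delta}^R}$, and a direct expansion of the three cases of $c(\kappa,\delta)$ in Lemma \ref{t4.1.5} yields $c(\kappa,\delta)^{n}=1-\tau(\delta)$.

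The key inequality will follow from Alexandrov angle monotonicity. Write $R_x = |px|$, $R_y = |py|$, $r_x = r-\lambda(R-R_x)$, $r_y = r-\lambda(R-R_y)$, and let $\tilde\alpha := \tilde\measuredangle xpy$ be the comparison angle in $\mathbb S^{2}_\kappa$. Because $\phi(x), \phi(y)$ lie on the geodesics $\geod{px},\geod{py}$ strictly closer to $p$, the hypothesis $\text{curv}\ge\kappa$ forces $\tilde\measuredangle\phi(x)p\phi(y)\ge\tilde\alpha$. Since the opposite side of a model hinge is monotonically increasing in the included angle,
\begin{equation*}
|\phi(x)\phi(y)|_X \ge f_\kappa(r_x,r_y,\tilde\alpha),
\end{equation*}
where $f_\kappa(a,b,\theta)$ denotes the opposite side of the $\mathbb S^{2}_\kappa$-hinge with sides $a,b$ meeting at angle $\theta$. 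By definition of $\tilde\alpha$, we also have $|xy|_X = f_\kappa(R_x,R_y,\tilde\alpha)$.

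Next, realize these two model quantities as distances in the cone $C_\kappa(\Sigma_p)$. Fix any $\xi,\eta\in\Sigma_p$ with $|\xi\eta|_{\Sigma_p}=\tilde\alpha$ (possible since $\tilde\alpha\le\measuredangle xpy\le\diam\Sigma_p$ and $\Sigma_p$ is connected), and let $\hat x, \hat y \in C_\kappa(\Sigma_p)$ be the points on the rays through $\xi,\eta$ at radii $R_x,R_y$, while $\hat\phi(\hat x), \hat\phi(\hat y)$ lie on the same two rays at radii $r_x,r_y$. By the $\kappa$-cone cosine law, $|\hat x\hat y| = f_\kappa(R_x,R_y,\tilde\alpha)=|xy|_X$ and $|\hat\phi(\hat x)\hat\phi(\hat y)| = f_\kappa(r_x,r_y,\tilde\alpha)$. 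Applying Lemma \ref{t4.1.5} to the pair $(\hat x,\hat y)$ gives $\snk(|\hat\phi(\hat x)\hat\phi(\hat y)|/2) \ge c(\kappa,\delta)\,\lambda\,\snk(|\hat x\hat y|/2)$, which combined with the angle-monotonicity estimate above produces the required Lipschitz bound for $\phi$ in $X$ and finishes the proof.

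The main obstacle I anticipate is the bookkeeping between three different ``angles'' at $p$: the actual angle $\measuredangle xpy$ in $X$, the comparison angle $\tilde\measuredangle xpy$ in $\mathbb S^{2}_\kappa$ driving the monotonicity step, and the $\Sigma_p$-distance used to lift into $C_\kappa(\Sigma_p)$. Keeping them separated is precisely what allows the lifted hinge to reproduce $|xy|_X$ \emph{exactly} (not merely bound it), so that the cone-side estimate of Lemma \ref{t4.1.5} can be chained with the inequality coming from angle monotonicity.
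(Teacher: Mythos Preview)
Your proof is correct and follows essentially the same route as the paper's: lift $x,y$ to a comparison pair $\tilde x,\tilde y$ in the cone $C_\kappa(\Sigma_p)$ with $|\tilde o\tilde x|=|px|$, $|\tilde o\tilde y|=|py|$, $|\tilde x\tilde y|=|xy|$, invoke angle monotonicity (what the paper cites as ``condition B'' from [BGP]) to get $|\phi(x)\phi(y)|\ge|\tilde\phi(\tilde x)\tilde\phi(\tilde y)|$, and finish with Lemmas~\ref{t4.1.5} and~\ref{t4.1.6}. Your write-up is simply more explicit about why the lifted pair exists in $\Sigma_p$ and about the chain $\tilde\alpha\le\measuredangle xpy\le\diam\Sigma_p$, but the underlying argument is identical.
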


\begin{proof}
Consider the map $\phi: A_{R-\delta}^R\to A_{r-\lambda \delta}^r$ and
$\tilde\phi: \tilde A_{R-\delta}^R\to \tilde A_{r-\lambda \delta}^r$
defined as the above. For any $x,y\in A_{R-\delta}^R$, take two points
$\tilde x, \tilde y\in C_{\kappa}(\Sigma_p)$ such that $|\tilde o\tilde x|
=|px|$, $|\tilde o\tilde y|=|py|$ and $|\tilde x\tilde y|=|xy|$.
By condition B (see [BGP]), it's easy to see that $|f(x)f(y)|\ge|
\tilde\phi(\tilde x)\tilde\phi(\tilde y)|$. Thus by Lemma \ref{t4.1.5}, we have
$$\snk\frac
{|f(x)f(y)|}2\ge\snk\frac
{|\tilde\phi(\tilde x)\tilde\phi(\tilde y)|}2
\ge (1-\tau(\delta))\cdot \snk \frac{|\tilde x\tilde y|}2
=(1-\tau(\delta))\cdot \snk \frac{|xy|}2.$$
Then we get the desired estimate by Lemma \ref{t4.1.6}.
\end{proof}

\begin{proof}[{\bf Proof of the monotonicity in Theorem D}]\quad

Continuing from the early discussion, the proof reduces to verify (\ref{e4.1.1.4}).
We now take $\delta>0$ sufficiently small, and choose
the sequence $\{a_i\}_{i=0}^\infty$ as:
\begin{equation}
a_0=1, a_{i+1}=a_i-\frac{\snk (a_ir_j)}{r_j\cdot \snk r_j}
\cdot\delta,\qquad i=0, 1, \cdots
\label{e4.1.8}\end{equation}
Then
\begin{align*}
0<a_{i+1}
\le
\begin{cases}
(1-\frac \delta {r_j})a_i, & \text{ if }\kappa\ge 0,
\\
(1-\frac \delta{\snk r_j})a_i, & \text{ if }\kappa<0,
\end{cases}
\end{align*}
and thus $a_i\to 0$ and is monotonically decreasing.
For each $0\le i<\infty$ and $0\le j\le m$, consider the map,
$\phi: A_{r_j-\delta}^{r_j}\to A_{a_ir_j-\lambda_i\delta}^{a_ir_j} =A_{a_{i+1}r_j}^{a_ir_j}$,
with $\lambda_i=\frac{\snk(a_ir_j)}{\snk (r_j)}$.
By Corollary \ref{t4.1.7}, we obtain that
$$\frac{\text{vol\,}(A_{a_{i+1}r_j}^{a_ir_j}
)}{\text{vol\,}(A_{r_j-\delta}^{r_j})}\ge (1-\tau(\delta))
\left(\frac{\snk(a_ir_j)}{\snk r_j}\right)^n.$$
Observe that for $\delta\to 0$, $\{a_i\}$ will become more dense, and thus
we can take $N_\delta>0$ such that $a_{N_\delta}r_j\ge R_1$ and $a_{N_\delta}
r_j\to R_1$ as $\delta\to 0$. Summing up for $i=0,1,\cdots, N_\delta$, we get
\begin{align*}
\frac{\text{vol\,}(A_{r_j}^{R_1})}{\text{vol\,}(A_{r_j-\delta}^{r_j})}
&\ge\frac{\sum_{i=0}^{N_\delta} \text{vol\,}(A_{a_{i+1}r_j}^{a_ir_j}
)}{\text{vol\,}(A_{r_j-\delta}^{r_j})}
\\
&\ge\sum_{i=0}^{N_\delta} (1-\tau(\delta))
\left(\frac{\snk(a_ir_j)}{\snk r_j}\right)^n
\\
&\ge(1-\tau(\delta))\cdot \frac 1{\delta\cdot
\snk^{n-1}(r_j)}\sum_{i=0}^{N_\delta}\snk^{n-1}(a_ir_j)\cdot
\frac {\delta \cdot\snk(a_ir_j)}{\snk r_j}
\\
&=(1-\tau(\delta))
\cdot \frac 1{\delta\cdot\snk^{n-1}(r_j)}\left(\int^{r_j}_{R_1}
\snk^{n-1}(t)\,dt+\tau (\delta)\right)
\\
&=(1-\tau(\delta))\cdot
\frac {\dsize\int^{r_j}_{R_1}\snk^{n-1}(t)\,dt}{\delta\cdot
\snk^{n-1}(r_j)},
\end{align*}
or the following equivalent form:
$$\frac{\text{vol\,}(A_{r_j-\delta}^{r_j})}{\text{vol\,}(A_{r_j}^{R_1})}
\le (1+\tau(\delta))\cdot
\frac{\delta\cdot
\snk^{n-1}(r_j)}{\dsize\int^{r_j}_{R_1}\snk^{n-1}(t)\,dt}.
$$
Summing up for all $j$ and together with (\ref{e4.1.1.2}) and (\ref{e4.1.1.3}),
we get $$I_1+O(\delta)\le(1+\tau(\delta))I_2+\tau(\delta).$$ Let $\delta\to 0$,
we get the desired inequality.
\end{proof}

The rest of this section is devoted to a proof of Lemma \ref{t4.1.5}.
The following are some properties used in the proof.

\begin{lem}\label{t4.1.9}\quad

\begin{enumerate}
  \item For $\lambda\in [0,1]$ and $x\in [0,\pi]$, $\sin\lambda x\geq \lambda\sin x$.
\item For $\lambda\in [0,1]$ and $x\geq 0$, $\sinh\lambda x\leq \lambda\sinh x$.
  \item For $\lambda \geq 0$ and $x\geq 0$, $\frac{\sin \lambda x}{\lambda\sin x}\geq 1-(\lambda x)^2/6$.
  \item For $\lambda \geq 0$ and $x\geq 0$, $\frac{\sinh \lambda x}{\lambda\sinh x}\geq \frac{x}{\sinh x}\geq 1-x$.
  \item Let $\triangle pab$ be a triangle in $S^2_\kappa$. The cosine law can be written as
$$sn_\kappa^2\frac{|ab|}{2}
        =sn_\kappa^2\frac{|pa|-|pb|}{2}+\sin^2\frac{\measuredangle apb}{2}sn_\kappa|pa|sn_\kappa|pb|.
$$
\end{enumerate}
\end{lem}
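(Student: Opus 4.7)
The five inequalities are standard calculus facts, and the plan is to dispatch (1)--(4) via concavity/convexity arguments together with elementary Taylor remainders, and (5) via a half-angle expansion of the law of cosines on $\mathbb{S}^2_\kappa$.

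For (1) and (2), the observation is that $\sin$ is concave on $[0,\pi]$ (since $\sin''=-\sin\le 0$) while $\sinh$ is convex on $[0,\infty)$ (since $\sinh''=\sinh\ge 0$), and both vanish at $0$. Writing $\lambda x$ as the convex combination $\lambda\cdot x+(1-\lambda)\cdot 0$ and applying Jensen, (1) becomes $\sin(\lambda x)\ge \lambda\sin x+(1-\lambda)\cdot 0=\lambda\sin x$, and (2) becomes the dual $\sinh(\lambda x)\le \lambda\sinh x$.

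For (3), the key input is the Taylor bound $\sin y\ge y-y^3/6$ for all $y\ge 0$; I would verify this by setting $f(y)=\sin y-y+y^3/6$, noting $f(0)=f'(0)=0$, and checking that $f''(y)=y-\sin y\ge 0$ on $[0,\infty)$ (trivial for $y\ge 1$ since $\sin y\le 1$, and from $\sin y\le y$ for $y\in[0,1]$), so that $f'$ is non-decreasing and hence $f\ge 0$. Combined with $\sin x\le x$, this yields in the natural range
\[
\frac{\sin(\lambda x)}{\lambda\sin x}\ge \frac{\sin(\lambda x)}{\lambda x}\ge \frac{\lambda x-(\lambda x)^3/6}{\lambda x}=1-\frac{(\lambda x)^2}{6}.
\]
For (4), the first inequality reduces, after multiplying through by the positive quantity $\lambda\sinh x\cdot \sinh x/x$, to $\sinh(\lambda x)\ge \lambda x$, which is immediate from the Taylor series of $\sinh$. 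The second inequality $\frac{x}{\sinh x}\ge 1-x$ is trivial when $x\ge 1$; for $x\in[0,1)$ it is equivalent to $\sinh x\le \frac{x}{1-x}$, which I would prove by setting $h(x)=\frac{x}{1-x}-\sinh x$ and checking that $h(0)=h'(0)=0$, $h''(0)=2>0$, and $h'''(x)=\frac{6}{(1-x)^4}-\cosh x>6-\cosh 1>0$ throughout $[0,1)$; three successive integrations then give $h\ge 0$.

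For (5), I would start from the classical spherical law of cosines
\[
\cos(\sqrt\kappa|ab|)=\cos(\sqrt\kappa|pa|)\cos(\sqrt\kappa|pb|)+\sin(\sqrt\kappa|pa|)\sin(\sqrt\kappa|pb|)\cos\measuredangle apb
\]
(with the obvious hyperbolic/Euclidean analogues for $\kappa\le 0$). Substituting $\cos\measuredangle apb=1-2\sin^2(\measuredangle apb/2)$ splits the right side into $\cos\alpha\cos\beta+\sin\alpha\sin\beta=\cos(\alpha-\beta)$, with $\alpha=\sqrt\kappa|pa|$ and $\beta=\sqrt\kappa|pb|$, minus $2\sin^2(\measuredangle apb/2)\sin\alpha\sin\beta$. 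Applying $\cos\phi=1-2\sin^2(\phi/2)$ on both sides of the resulting identity and dividing by $2\kappa$ then produces exactly (5) after invoking $\snk(t)=\sin(\sqrt\kappa t)/\sqrt\kappa$; the $\kappa<0$ case is identical after substituting $\sinh$ for $\sin$ throughout, and $\kappa=0$ recovers the Euclidean law of cosines. I expect the main bookkeeping burden to lie in the second inequality of (4) and in keeping signs straight through the half-angle manipulation of (5); none of the five parts should require a genuinely new idea.
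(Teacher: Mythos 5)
Your proof is correct and follows essentially the same elementary route as the paper's: parts (1)--(4) rest on the same Taylor/monotonicity bounds (the paper uses a derivative-sign argument where you invoke Jensen for (1)--(2), and a term-by-term series comparison $\sinh x\le x(1+x+x^2+\cdots)=\frac{x}{1-x}$ where you differentiate three times for the tail of (4) --- interchangeable standard devices). For (5) the paper merely writes that it ``follows by trigonometric identities,'' and your half-angle manipulation of the law of cosines on $S^2_\kappa$ is exactly the intended computation; the only bookkeeping quibble is that one subtracts $1$ and divides by $-2\kappa$ rather than by $2\kappa$, which does not affect the conclusion.
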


\begin{proof}
(1) Let $h(x)=\sin\lambda x-\lambda\sin x$, then
$$h^\prime(x)=\lambda\cos\lambda x-\lambda\cos x
=\lambda(\cos\lambda x-\cos x)\geq 0
$$ since $0\leq\lambda x\leq x\leq \pi$.

(2) Let $h(x)=\sinh\lambda x-\lambda\sinh x$, then
$$h^\prime(x)=\lambda\cosh\lambda x-\lambda\cosh x
=\lambda(\cosh\lambda x-\cosh x)\leq 0
$$ since $0\leq\lambda x\leq x$.

(3) For $x>0$, one can show that $x\geq \sin x\geq
x-x^3/6$. Then
$$\frac{\sin \lambda x}{\lambda\sin x}
\geq \frac{\lambda x-(\lambda x)^3/6}{\lambda x}
=1-(\lambda x)^2/6.
$$

(4) The first equality is easy to see through
$\sinh\lambda x\geq \lambda x$. Obviously. the second equality is
true for $x\geq 1$. For $0<x<1$,
$$\sinh x=x+\frac{x^3}{6}+\cdots\leq x(1+x+x^2+\cdots)=\frac{x}{1-x}.$$

(5) Follows by trigonometric metric identities.
\end{proof}

\begin{proof}[{\bf Proof of Lemma \ref{t4.1.5}}]

By scaling, we only need to check for $\kappa=1,-1$ and $\kappa=0$.

Case 1. $\kappa=1$. Noting that
$$\frac{|px^\prime|-|py^\prime|}{|px|-|py|}
=\frac{\lambda(|px|-|py|)}{|px|-|py|}
=\lambda,
$$
by Lemma \ref{t4.1.9}(3) and $0\leq ||px|-|py||\leq\delta
<\frac{1}{2}\sin R$, we have
\begin{align*}
\sin\left(\frac{||px^\prime|-|py^\prime||}{2}\right)
&=
\sin\left(\lambda\cdot\frac{||px|-|py||}{2}\right)
\notag\\
&\geq
\left(1-\frac{(\lambda\delta)^2}{6}\right)\lambda\cdot
\sin\left(\frac{||px|-|py||}{2}\right)
\notag\\&\geq
\left(1-\frac{\delta^2}{6\sin^2 R}\right)\lambda\cdot
\sin\left(\frac{||px|-|py||}{2}\right)
\notag\\
&\geq \left(1-\frac{2\delta}{\sin R+\delta}\right)\lambda\cdot
\sin\left(\frac{||px|-|py||}{2}\right)
\notag\\
&=\tau_1\lambda\cdot
\sin\left(\frac{||px|-|py||}{2}\right).
\end{align*}
Thus
\begin{align}
\tau_1\lambda
\leq
\frac{\sin\left(\frac{||px^\prime|-|py^\prime||}{2}\right)}
{\sin\left(\frac{||px|-|py||}{2}\right)}
\leq
\frac{\lambda\frac{||px|-|py||}{2}}
{\sin\left(\frac{||px|-|py||}{2}\right)}
\leq
\lambda\cdot\frac{\delta}{\sin\delta}
\leq \tau_1^{-1}\lambda.
\label{e4.1.3.1}\end{align}
For any $x\in \tilde A_{R-\delta}^R$,
by Lemma \ref{t4.1.9}(1), we have
$$\sin |px^\prime|\geq\frac{|px^\prime|}{r}\sin r
\geq
\frac{r-\lambda\delta}{r}\sin r
=
\frac{r-\frac{\sin r}{\sin R}\delta}{r}\sin r
\geq
\left(1-\frac{\delta}{\sin R}\right)\sin r,
$$
Together with $\sin|px'|-\sin
r=2\sin\frac{|px'|-r}{2}\cos\frac{|px'|+r}{2}\leq r-|px'|\leq
\lambda\delta$, we get
$$\left(1-\frac{\delta}{\sin R}\right)\sin r
\leq
\sin|px'|\leq\sin
r+\lambda\delta=\left(1+\frac{\delta}{\sin R}\right)\sin r.$$
Similarly,
$$\sin|px|\geq \frac{|px|}{R}\sin R\geq
\frac{R-\delta}{R}\sin R\geq \left(1-\frac{\delta}{\sin
R}\right)\sin R$$
and
$$\sin|px|-\sin
R=2\sin\frac{|px|-R}{2}\cos\frac{|px|+R}{2}\leq R-|px|\leq \delta,$$
hence
$$\left(1-\frac{\delta}{\sin R}\right)\sin
R\leq\sin|px|\leq\sin R+\delta=\left(1+\frac{\delta}{\sin
R}\right)\sin R.$$
So
\begin{equation}
c_1\,\frac{\sin r}{\sin R}
\leq \frac{\sin|px^\prime|}{\sin|px|}
\leq c_1^{-1}\frac{\sin r}{\sin R}.
\label{e4.1.3.2}
\end{equation} Let $\theta=\measuredangle xpy$. Since
$\frac{|xy|}{2}\leq\frac{\pi}{2}$, by the cosine law and
inequalities (\ref{e4.1.3.1}), (\ref{e4.1.3.2}),
\begin{align*}
c_1^2\lambda^2
\leq\frac{\sin^2\frac{|x^\prime y^\prime|}{2}}
{\sin^2\frac{|x y|}{2}}
&=\frac{\sin^2\frac{|px^\prime|-|py^\prime|}{2}
+\sin^2\frac{\theta}{2}\sin|px^\prime|\sin|py^\prime|}
{\sin^2\frac{|px|-|py|}{2}
+\sin^2\frac{\theta}{2}\sin|px|\sin|py|}
\leq c_1^{-2}\lambda^2.
\end{align*}

Case 2, $\kappa=-1$. By Lemma \ref{t4.1.9}(2), $\lambda\delta
=\frac{\sinh r}{\sinh R}\cdot\frac{R}{\cosh R} < \frac{r}{R}\cdot
R=r$. Together with Lemma \ref{t4.1.9}(4), we get
\begin{equation}
\lambda\geq
\frac{\sinh\left(\frac{||px^\prime|-|py^\prime||}{2}\right)}
{\sinh\left(\frac{||px|-|py||}{2}\right)}
=
\frac{\sinh\left(\lambda\cdot\frac{||px|-|py||}{2}\right)}
{\sinh\left(\frac{||px|-|py||}{2}\right)}
\geq
\left(1-\delta\right)\lambda
\geq
c_{-1}\lambda,
\label{e4.1.3.3}\end{equation} since $\frac{\cosh R}{R}\geq
\frac{1+R^2/2}{R}>1$. If $\delta<\frac{R}{\cosh R} <R$, then
$\frac{\lambda\delta}{2r}<\frac{r}{R}\cdot\frac{\delta}{2r}
=\frac{\delta}{2R}<1$. Hence we can apply Lemma \ref{t4.1.9}(2) with
$\lambda=\frac{\sinh r}{\sinh R}\leq \frac{r}{R}$, to get
\begin{align*}
\frac{\sinh r-\sinh(r-\lambda\delta)}{\sinh r}
\leq \frac{2\sinh (\lambda\delta/2)\cdot\cosh r}{\sinh r}
\leq \frac{\lambda\delta}{r}\cdot\cosh r
\leq \frac{\delta\cdot\cosh R}{R},
\end{align*} thus
$$\sinh(r-\lambda\delta)\geq
\left(1-\delta\cdot\frac{\cosh R}{R}\right)\sinh r.$$ For $x'\in
\tilde A_{r-\lambda\delta}^r$, $\left(1-\delta\cdot\frac{\cosh
R}{R}\right)\sinh r
\leq\sinh(r-\lambda\delta)
\leq\sinh|px^\prime|
\leq\sinh r$.
For $x\in \tilde A_{R-\lambda\delta}^R$,
$$\frac{\sinh R-\sinh(R-\delta)}{\sinh R}
\leq \frac{2\sinh(\delta/2)\cosh R}{\sinh R}
\leq \frac{\delta\cdot\cosh R}{R},
$$
and $\left(1-\delta\cdot\frac{\cosh R}{R}\right)\sinh R
\leq\sinh(R-\lambda\delta)
\leq\sinh|px|
\leq\sinh R$.
Then
\begin{equation}
c_{-1}\frac{\sinh r}{\sinh R}
\leq
\frac{\sinh|px^\prime|}{\sinh|px|}
\leq
c_{-1}^{-1}\frac{\sinh r}{\sinh R}
\label{e4.1.3.4}.
\end{equation}
By inequalities
(\ref{e4.1.3.3}), (\ref{e4.1.3.4}) and the cosine law, we get
\begin{align*}
c_{-1}^2\lambda^2
\leq\frac{\sinh^2\frac{|x^\prime y^\prime|}{2}}
{\sinh^2\frac{|x y|}{2}}
&=\frac{\sinh^2\frac{|px^\prime|-|py^\prime|}{2}
+\sin^2\frac{\theta}{2}\sinh|px^\prime|\sinh|py^\prime|}
{\sinh^2\frac{|px|-|py|}{2}
+\sin^2\frac{\theta}{2}\sinh|px|\sinh|py|}
\leq c_{-1}^{-2}\lambda^2.
\end{align*}

Case 3. $\kappa=0$. This is straight forward.
\end{proof}

%-----------------------------------------------------------------------------
% Beginning of biblio.tex
%-----------------------------------------------------------------------------
%
% AMS-LaTeX 1.2 sample file for a monograph, based on amsbook.cls.
% This is a data file input by chapter.tex.
%%%%%%%%%%%%%%%%%%%%%%%%%%%%%%%%%%%%%%%%%%%%%%%%%%%%%%%%%%%%%%%%%%%%%%%%

\vskip 30mm

\bibliographystyle{amsalpha}

%-----------------------------------------------------------------------------
% End of biblio.tex
%-----------------------------------------------------------------------------

\end{document}